\let\ran\relax
\let\Ran\relax
\let\ker\relax
\let\Ran\relax
\DeclareMathOperator*{\Ker}{Ker}
\DeclareMathOperator*{\ker}{Ker}
\DeclareMathOperator*{\Ran}{Ran}
\DeclareMathOperator*{\ran}{Ran}
\DeclareMathOperator*{\Fix}{Fix}
\newcommand{\dd}{\mathrm{d}}
\newcommand{\Mlog}{M_\mathrm{log}}
\newcommand{\mlog}{m_\mathrm{log}}
\newcommand{\B}{\mathcal{B}}
\newcommand{\RR}{\mathbb{R}}
\newcommand{\CC}{\mathbb{C}}
\newcommand{\NN}{\mathbb{N}}
\newcommand{\ZZ}{\mathbb{Z}}
\newcommand{\Ad}{A_0} 
\newcommand{\Aod}{A_1} 
\newcommand{\dr}{\gz}
\newcommand{\Normgg}[1]{\biggl\|#1\biggr\|}
\newtheorem{thm}{Theorem}[section]
\newtheorem{prp}[thm]{Proposition}
\newtheorem{lem}[thm]{Lemma}
\theoremstyle{definition}
\newtheorem{ass}[thm]{Assumptions}
\newtheorem{rem}[thm]{Remark}
\newtheorem{ex}[thm]{Example}
\numberwithin{equation}{section}
\begin{document}

\title[Asymptotics for systems of differential equations]{Asymptotics for infinite systems of differential equations}

\author{Lassi Paunonen}
\address{Department of Mathematics, Tampere University of Technology, PO. Box 553, 33101 Tampere, Finland}
\email{lassi.paunonen@tut.fi}

\author{David Seifert}
\address{St John's College, St Giles, Oxford\;\;OX1 3JP, United Kingdom}
\email{david.seifert@sjc.ox.ac.uk}

\begin{abstract}
This paper investigates the asymptotic behaviour of solutions to certain  infinite systems of ordinary differential equations. In particular, we use results from ergodic theory and the asymptotic theory of $C_0$-semigroups to obtain a characterisation, in terms of convergence of certain Ces\`aro averages, of those initial values which lead to convergent solutions. Moreover, we obtain estimates on the \emph{rate} of convergence for solutions whose initial values satisfy a stronger ergodic condition. These results rely on a detailed spectral analysis of the operator describing the system, which is made possible by certain structural assumptions on the operator. The resulting class of systems is sufficiently broad to cover a number of  important applications, including in particular both the so-called \emph{robot rendezvous problem} and an important class of \emph{platoon systems} arising in control theory. Our method leads to new results in both cases.
\end{abstract}

\thanks{Part of this work was 
carried out
while the first author visited Oxford in March and April 2015. The visit was supported by the EPSRC grant  EP/J010723/1 held by Professor C.J.K.\ Batty (Oxford) and Professor Y.\ Tomilov (Warsaw).
The first author is  funded by the Academy of Finland grant number 298182. Both authors
would like to thank Professor Batty for his useful comments on an earlier version of this paper.}
\subjclass[2010]{34A30, 34D05  (34H15, 47D06, 47A10 , 47A35).}
\keywords{System, ordinary differential equations, asymptotic behaviour, rates of convergence, $C_0$-semigroup, spectrum, ergodic theory.}

\maketitle

\section{Introduction}

The purpose of this paper is to study the asymptotic behaviour of solutions to infinite systems of coupled ordinary differential equations. In particular, given $m\in\NN$, we consider time-dependent vectors $x_k(t)$  satisfying 
\eqn{
\label{eq:ODEintro}
\dot{x}_k(t) &= \Ad x_k(t) +\Aod x_{k-1}(t), \quad k\in\Z,\;  t\ge0,
}
for $m\times m$ matrices $\Ad$ and $\Aod$, and we assume that the initial values $x_k(0)\in\CC^m$, $k\in\ZZ$, are known.  The characteristic feature of this class of systems is that the dynamics of each subsystem depend not only on the state of the subsystem itself but also the state of the previous subsystem.  Systems of this type arise naturally in applications, and indeed our investigation of such models is motivated by two  important examples.

The first is the so-called \emph{robot rendezvous problem} \cite{FeiFra12,FeiFra12b}, where $m=1$, $A_0=-1$ and $A_1=1$.
In this case the equations in~\eqref{eq:ODEintro} can be thought of as describing the motion in the complex plane of countably many vehicles, or \emph{robots}, indexed by the integers $k\in\ZZ$, following the rule that robot $k$ moves in the direction of robot $k-1$ with speed equal to their separation.  A second important example in which the general model \eqref{eq:ODEintro} arises is the study of \emph{platoon systems} in control theory; see for instance \cite{PloSch11, SwaHed96,ZwaFir13}. Here we begin with a  more realistic dynamical model of our  vehicles by associating with each a position in the complex plane as well as a velocity and an acceleration, and the control objective is to steer the vehicles towards a state in which, for each $k\in\ZZ$,   vehicle $k$ is a certain target separation $c_k\in\CC$ away from vehicle $k-1$ and all vehicles are moving at a target velocity $v\in\CC$. This model too can be written in the form \eqref{eq:ODEintro} for $m=3$ and suitable $3\times 3$ matrices $A_0$ and $A_1$ which involve certain control parameters that need to be fixed. In both cases the key question is whether solutions converge to a limit as $t\to\infty$. Thus in the robot rendezvous problem we would like to know whether the positions of the robots  converge to a mutual meeting, or \emph{rendezvous}, point, and in the platoon system we ask whether we can choose the control parameters in such a way that the vehicles asymptotically approach their target state. 

We present a unified approach to the study of these problems by first reformulating the system \eqref{eq:ODEintro} as the abstract Cauchy problem
\eqn{
\label{CP} 
\begin{cases}
\dot{x}(t)= Ax(t), &t\ge0,\\
 x(0)=x_0\in X,
\end{cases}}
on the space $X=\lp[p](\C^m)$ with $m\in\NN$ and $1\leq p\le\infty$.
Note that \eqref{CP} indeed becomes \eqref{eq:ODEintro} if we let  $x(t) = (x_k(t))_{k\in\Z}$ for $t\ge0$, $x_0=(x_k(0))_{k\in\ZZ}$  and take the bounded linear operator $A$ to act by sending a sequence $(x_k)_{k\in\ZZ}\in X$ to 
 $$Ax=(A_0x_k+A_1x_{k-1})_{k\in\ZZ}.$$
 Systems of this form are examples of
 what are sometimes called  ``spatially invariant systems'', where in general it is possible for the dynamics of each subsystem to depend on more than just one other subsystem; see for instance \cite{BamPag02}. Our main objective is to investigate
 whether or not 
 the solution $x(t)$, $t\ge0$, of~\eqref{CP} converges to a limit as $t\to\infty$ and, if so, 
 what
 can be said about the
rate of convergence. Most of the existing research into such systems is confined to the Hilbert space case $p=2$. For instance, it is shown in \cite{CurIft09} using Fourier transform techniques that 
solutions $x(t)$, $t\ge0$, of 
 some spatially invariant systems of the form~\eqref{eq:ODEintro}
 on the space $X=\lp[2](\C^2)$
 satisfy $x(t)\to0$ 
 as $t\to\infty$ for all initial values $x_0\in X$,
 but that there exists no
uniform
rate of decay.
 Since the Fourier transform approach is specific to the Hilbert space setting, we develop a  new approach to studying the asymptotic behaviour of solutions of~\eqref{CP} in the case where the matrices $A_0$ and $A_1$ satisfy certain additional assumptions. Specifically, we assume throughout that $A_1\ne0$ to avoid the trivial uncoupled case, but more importantly we suppose that there exists a rational function $\phi$ such that 
  \eqn{
  \label{eq:PSintro}
  \Aod (\gl-\Ad)\inv\Aod
  = \phi(\gl)\Aod, \qquad \gl\in\CC\setminus \gs(\Ad).
  }
 When such a function $\phi$ exists we call it the \emph{characteristic function} of our system.  Both the robot rendezvous problem and the platoon system fall into this special class, as indeed do many other systems. For systems having this property we develop techniques allowing us to handle  the full range $1\le p\le\infty$ rather than just the case $p=2$, and in particular we include the cases $p=1$ and $p=\infty$, where it turns out no longer to be the case that all solutions converge to a limit. In fact our approach, which is based on a detailed  analysis of the operator $A$ and the $C_0$-semigroup it generates,  leads to a complete understanding of which initial values do and which do not lead to convergent solutions in these cases, and moreover gives an estimate on the  \emph{rate} of convergence for a certain subset of initial values.

The paper is organised as follows.
Our main theoretical results are presented in 
Sections~\ref{sec:spec}, \ref{sec:bound}, and~\ref{sec:asymp}.
In Section~\ref{sec:spec} we examine the spectral properties of $A$, and the main results are Theorem~\ref{thm:Aspec}, which among other things provides a very simple characterisation of the set $\sigma(A)\setminus\sigma(A_0)$ in terms of the characteristic function $\phi$, namely
\eq{
\gs(A)\setminus \gs(\Ad) =\big\{\gl\in\C\setminus\gs(A_0):\abs{\phi(\gl)}=1\big\},
}
and Proposition~\ref{prp:Rnormestimates}, which describes the behaviour of the resolvent operator of $A$ in the neighbourhood of spectral points. In Section~\ref{sec:bound} we turn to the delicate issue of whether the semigroup generated by $A$ is uniformly bounded. The main result here is Theorem~\ref{thm:Aunifbdd}, which gives a sufficient condition for uniform boundedness involving the derivatives of $\phi$. In Section~\ref{sec:asymp}, we then combine the results of Sections~\ref{sec:spec} and \ref{sec:bound} with known results in ergodic theory and
recent
results in the theory of $C_0$-semigroups~\cite{BatChi14,ChiSei15, Mar11} in order to obtain our main result, Theorem~\ref{cor:gen}, which describes the asymptotic behaviour of solutions to general systems in our class. For instance, it is a consequence of Theorem~\ref{cor:gen}  that there exists an even integer $n\ge2$ determined solely by the characteristic function $\phi$  such that for all $x_0\in X$ the derivative of the solution $x(t)$, $t\ge0$, of \eqref{CP} satisfies the quantified decay estimate
 \eqn{
 \label{eq:decayintro}
\|\dot{x}(t)\|=O \left(\bigg(\frac{\log t}{t}\bigg)^{1/{n}}\right),\quad t\to\infty,
}  
and the logarithm can be omitted if $p=2$. Moreover, for $1<p<\infty$ not only the derivative of each solution  but also the solution itself decays to zero as $t\to\infty$, but this is no longer true when $p=1$ or $p=\infty$. In these cases, Theorem~\ref{cor:gen} gives a characterisation, in terms of convergence of certain Ces\`{a}ro means, of those initial values $x_0\in X$ which do lead to convergent solutions, and the result also shows that under a supplementary condition the convergence of solutions to their limit can be quantified in a form analogous to \eqref{eq:decayintro}. 

In Sections \ref{sec:plat} and \ref{sec:robots}  we return to the motivating examples. First, in Section~\ref{sec:plat}, we apply the general result in the setting of the platoon system, which leads to extensions of results obtained previously in \cite{CurIft09, ZwaFir13} for the Hilbert space case $p=2$. In particular, the main result in this section, Theorem~\ref{cor:plat}, shows that the platoon system approaches its target for all $x_0\in X$ not just for $p=2$, as was shown in \cite{ZwaFir13}, but more generally when $1<p<\infty$. 
We also show that for $p=1$ and $p=\infty$ this statement is no longer true but our Theorem~\ref{cor:plat} provides a simple ergodic condition on the initial \emph{displacements} of the vehicles which is necessary and sufficient for the solution to converge to a limit. Then, in Section~\ref{sec:robots} we return to the robot rendezvous problem and use our general result, Theorem~\ref{cor:gen}, to settle several questions left open  in \cite{FeiFra12,FeiFra12b}. We conclude in Section~\ref{sec:concl} by mentioning several topics which remain subjects for future research.

The notation we use is more or less standard throughout. In particular, given a complex Banach space $X$, the norm on $X$ will typically be denoted by $\|\cdot\|$ and occasionally, in order to avoid ambiguity, by $\|\cdot\|_X$. In particular, for $m\in\NN$ and $1\le p\le\infty$, we let $\ell^p(\CC^m)$ denote the space of doubly infinite sequences $(x_k)_{k\in\ZZ}$ such that $x_k\in\CC^m$ for all $k\in\ZZ$ and $\sum_{k\in\ZZ}\|x_k\|^p<\infty$ if $1\le p<\infty$ and $\sup_{k\in\ZZ}\|x_k\|<\infty$ if $p=\infty$. Here and in all that follows we endow the finite-dimensional space $\CC^m$ with the standard Euclidean norm and we consider $\ell^p(\CC^m)$ with the norm given for $x=(x_k)_{k\in\ZZ}$ by $\|x\|=(\sum_{k\in\ZZ}\|x_k\|^p)^{1/p}$ if $1\le p<\infty$ and $\|x\|=\sup_{k\in\ZZ}\|x_k\|$ if $p=\infty$. With respect to this norm $\ell^p(\CC^m)$ is a Banach space for $1\le p\le\infty$ and a Hilbert space when $p=2$. We write $X^*$ for the dual space of $X$, and given $\phi\in X^*$ the action of $\phi$ on $x\in X$ is written as $\dualpair{x}{\phi}$. Moreover we write $\B(X)$ for the space of bounded linear operators on $X$, and given $A\in\B(X)$ we write $\ker(A)$ for the kernel and $\ran(A)$ for the range of $A$. Moreover, we let $\sigma(A)$ denote the spectrum of $A$ and, for $\lambda\in\CC\setminus\sigma(A)$ we write $R(\lambda,A)$ for the resolvent operator $(\lambda-A)^{-1}$. We write $\sigma_p(A)$ for the point spectrum and $\sigma_{ap}(A)$ for the approximate point spectrum of $A$. Given $A\in\B(X)$ we denote the dual operator of $A$ by $A'$. If $A$ is a matrix, we write $A^T$ for the transpose of $A$. Given two functions $f$ and $g$ taking values in $(0,\infty)$, we write $f(t)=O(g(t))$, $t\to\infty,$ if there exists a constant $C>0$ such that $f(t)\le Cg(t)$ for all sufficiently large values of $t$. If $f(t)=O(g(t))$ and $g(t)=O(f(t))$ as $t\to\infty$, or more generally as the argument $t$ tends to some point in the extended complex plane, we write $f(t)\asymp g(t)$ in the limit. Given two real-valued quantities $a$ and $b$, we write $a\lesssim b$ if there exists a constant $C>0$ such that $a\le Cb$ for all values of the parameters that are free to vary in a given situation. Finally, we denote the open right/left half plane by $\CC_\pm=\{\lambda\in\CC:\re\lambda\gtrless 0\}$, and we use a horizontal bar over a set to denote its closure.

\section{Spectral theory}
\label{sec:spec}

We begin by stating two standing assumptions on the matrices $A_0$, $A_1$ appearing in \eqref{eq:ODEintro}.

\begin{ass}
  \label{ass:PScond}
  We assume that 
  \eqn{\label{A1}\tag{A1}
  \Aod\neq 0.
  }
  Moreover we assume that there exists a function $\phi$ such that 
  \eqn{\label{A2}\tag{A2}
  \Aod R(\gl,\Ad)\Aod = \phi(\gl)\Aod, \qquad \gl\in\CC\setminus\sigma(\Ad).
  }
  If this assumption is satisfied we call $\phi$ the \emph{characteristic function}.
\end{ass}

\begin{rem}\label{rem:ass1}
It is clear that if (A2) is satisfied then the characteristic function $\phi$ is a rational function whose poles belong to the set $\gs(\Ad)$. Note also that for $|\lambda|>\|A_0\|$ we have
$$|\phi(\lambda)|\|A_1\|\le\frac{\|A_1\|^2}{|\lambda|-\|A_0\|}.$$
In particular,  when (A1) and (A2) both hold it follows that $|\phi(\lambda)|\to0$ as $|\lambda|\to\infty$. It is straightforward to show that both (A1) and (A2) are satisfied whenever $\rank(A_1)=1$.
\end{rem}

In this section we characterise the spectrum of the operator $A$ under our standing assumptions (A1) and (A2). The following is the main result. It essentially characterises the spectrum of $A$ in terms of the characteristic function $\phi$. Here and in what follows we use the notation 
$$\Omega_\phi=\big\{\gl\in\C\setminus\sigma(A_0):\abs{\phi(\gl)}=1\big\}.$$

\begin{thm}
  \label{thm:Aspec}
  Let  $1\le p\le\infty$ and $m\in\NN$, and suppose that \textup{(A1)}, \textup{(A2)} hold. Then the spectrum of $A$ satisfies 
  \eqn{\label{eq:spec_gen}
  \gs(A)\setminus \gs(\Ad) = \Omega_\phi.
  }
  Moreover, the following hold:  
  \begin{itemize}
      \setlength{\itemsep}{.5ex}
    \item[\textup{(a)}]  If $1\leq p<\infty$, then $\gs(A)\setminus \gs(\Ad)\subset \gs_{ap}(A)\setminus \gs_p(A)$.
\item[\textup{(b)}] If $p=\infty$, then $\gs(A)\setminus \gs(\Ad)\subset \gs_p(A)$
  and, given $\lambda\in\gs(A)\setminus\gs(A_0)$,
  \eqn{\label{eq:eigenspace}
  \ker(\gl -A)=\big\{(\phi(\gl)^k x_0)_{k\in\Z}:x_0\in \ran(R(\gl,\Ad)\Aod)\big\}.
  }
  In particular, $\dim\ker(\gl-A)=\rank(\Aod)$ for all $\lambda\in\gs(A)\setminus\gs(A_0)$.
  \end{itemize}
 Furthermore, for $\gl\in\gs(A)\setminus \gs(\Ad)$ the range of $\gl-A$ is dense in $X$ if and only if $1<p<\infty$.
\end{thm}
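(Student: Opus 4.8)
The plan is to pass to the dual. By the Hahn--Banach theorem, $\ran(\gl-A)$ is dense in $X$ precisely when the only $\Psi\in X^*$ annihilating $\ran(\gl-A)$ is $\Psi=0$; and such a $\Psi$ annihilates $\ran(\gl-A)$ if and only if $(\gl-A')\Psi=0$, where $A'\in\B(X^*)$ is the dual operator. So the task is to decide when $\gl-A'$ is injective on $X^*$. For $1\le p<\infty$ we have $X^*=\lp[p'](\C^m)$, with $p'$ the conjugate exponent, and a routine computation with the bilinear pairing $\dualpair{x}{\psi}=\sum_{k\in\Z}\psi_k^Tx_k$ shows that $A'$ acts by $(A'\psi)_k=\Ad^T\psi_k+\Aod^T\psi_{k+1}$ for $k\in\Z$. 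Throughout, one uses that $\gl\in\gs(A)\setminus\gs(\Ad)=\Omega_\phi$, so that $\abs{\phi(\gl)}=1$ and in particular $\phi(\gl)\neq0$.

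The key step is to unravel the eigenvalue equation $A'\psi=\gl\psi$, that is $(\gl-\Ad^T)\psi_k=\Aod^T\psi_{k+1}$. Since $\gl\notin\gs(\Ad)=\gs(\Ad^T)$ this is $\psi_k=S_\gl\psi_{k+1}$ with $S_\gl:=R(\gl,\Ad^T)\Aod^T$, and transposing \eqref{A2} (using $R(\gl,\Ad)^T=R(\gl,\Ad^T)$) gives $\Aod^TR(\gl,\Ad^T)\Aod^T=\phi(\gl)\Aod^T$, hence $S_\gl^2=\phi(\gl)S_\gl$. Substituting $\psi_{k+1}=S_\gl\psi_{k+2}$ and using this relation collapses the recursion to $\psi_k=\phi(\gl)\psi_{k+1}$ for all $k\in\Z$; since $\abs{\phi(\gl)}=1$, every $\psi\in\ker(\gl-A')$ therefore has $\norm{\psi_k}$ independent of $k$. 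If $1<p<\infty$ then $p'<\infty$ and the only constant-norm sequence in $\lp[p'](\C^m)$ is $0$, so $\ker(\gl-A')=\{0\}$ and $\ran(\gl-A)$ is dense. If $p=1$ then $X^*=\lp[\infty](\C^m)$, and I would produce a nonzero kernel element directly: from $S_\gl^2=\phi(\gl)S_\gl$ one gets $\ran(S_\gl)\subseteq\ker(\phi(\gl)-S_\gl)$, and $\ran(S_\gl)\neq\{0\}$ because $\Aod\neq0$ by \eqref{A1} and $R(\gl,\Ad^T)$ is invertible; for any $0\neq\psi_0\in\ran(S_\gl)$ we have $S_\gl\psi_0=\phi(\gl)\psi_0$, so the bounded sequence $\psi=(\phi(\gl)^{-k}\psi_0)_{k\in\Z}$ satisfies $\psi_k=S_\gl\psi_{k+1}$ for all $k$, i.e. $(\gl-A')\psi=0$, and hence $\ran(\gl-A)$ is not dense.

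The case $p=\infty$ is the main obstacle, since $X^*=(\lp[\infty](\C^m))^*$ is not a sequence space and the computation above does not apply; here the plan is to construct a suitable $\Psi$ directly using a Banach limit. If $(\gl-A)x=y$ with $x\in\lp[\infty](\C^m)$, then $(\gl-\Ad)x_k=y_k+\Aod x_{k-1}$, and applying $\Aod R(\gl,\Ad)$ and using \eqref{A2} shows that $z_k:=\Aod x_k$ satisfies $z_k=\phi(\gl)z_{k-1}+\Aod R(\gl,\Ad)y_k$; thus $w_k:=\phi(\gl)^{-k}z_k$ is a \emph{bounded} sequence with $w_k-w_{k-1}=\phi(\gl)^{-k}\Aod R(\gl,\Ad)y_k$. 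Fixing $q\in\C^m$ with $q^T\Aod R(\gl,\Ad)\neq0$ (possible since the matrix $\Aod R(\gl,\Ad)$ is nonzero) and a Banach limit $\operatorname{LIM}$ along $k\to\infty$, set $\Psi(y):=\operatorname{LIM}_k\bigl(\phi(\gl)^{-k}\,q^T\Aod R(\gl,\Ad)\,y_k\bigr)$; this sequence is bounded for every $y\in\lp[\infty](\C^m)$, so $\Psi$ is a bounded linear functional, and by shift-invariance of the Banach limit $\Psi(y)=\operatorname{LIM}_k(q^Tw_k-q^Tw_{k-1})=0$ whenever $y\in\ran(\gl-A)$. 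Finally $\Psi\neq0$, since for $v\in\C^m$ with $q^T\Aod R(\gl,\Ad)v\neq0$ the choice $y=(\phi(\gl)^kv)_{k\in\Z}\in\lp[\infty](\C^m)$ makes the defining sequence constantly equal to $q^T\Aod R(\gl,\Ad)v\neq0$. Hence $\ran(\gl-A)$ is not dense when $p=\infty$, which completes the dichotomy. The steps requiring most care are the identification of $A'$ together with the matching transpose of \eqref{A2}, and the bookkeeping in the Banach-limit argument; the remaining verifications are elementary.
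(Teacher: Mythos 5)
The main issue is one of coverage: your proposal proves only the final sentence of the theorem (density of $\ran(\gl-A)$ if and only if $1<p<\infty$) and omits everything else. You do not establish the identity $\gs(A)\setminus\gs(\Ad)=\Omega_\phi$, which requires exhibiting a bounded inverse of $\gl-A$ whenever $\abs{\phi(\gl)}\neq1$ (the paper does this with an explicit Neumann-type series built from $R(\gl,\Ad)$ and the relation $(\Aod R(\gl,\Ad))^\ell=\phi(\gl)^{\ell-1}\Aod R(\gl,\Ad)$, one series for $\abs{\phi(\gl)}<1$ and another for $\abs{\phi(\gl)}>1$) together with a proof that every $\gl\in\Omega_\phi$ is genuinely in $\gs(A)$. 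You also do not prove part (a) --- the absence of eigenvalues for $1\le p<\infty$ and, more substantially, the construction of approximate eigenvectors (the paper uses normalised truncations of $(\phi(\gl)^kR(\gl,\Ad)\Aod y_0)_k$ supported on $\abs{k}\le n$) --- nor part (b), the eigenspace description \eqref{eq:eigenspace} for $p=\infty$. Note moreover that your density argument is not self-contained even as a proof of the last claim: you invoke ``$\gl\in\gs(A)\setminus\gs(\Ad)=\Omega_\phi$, so that $\abs{\phi(\gl)}=1$'', which presupposes exactly the unproved first part of the theorem.

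What you do prove is correct. For $1<p<\infty$ and $p=1$ your computation of $A'$ on $\ell^{p'}(\C^m)$, the collapse of the recursion to $\psi_k=\phi(\gl)\psi_{k+1}$ via $S_\gl^2=\phi(\gl)S_\gl$, and the explicit kernel element $(\phi(\gl)^{-k}\psi_0)_{k\in\Z}$ for $p=1$ coincide with the paper's argument. For $p=\infty$ your route is genuinely different: the paper shows directly, by an inductive identity and a contradiction, that the specific vector $y=(\phi(\gl)^ky_0)_{k\in\Z}$ with $R(\gl,\Ad)\Aod R(\gl,\Ad)y_0\neq0$ lies at positive distance from $\ran(\gl-A)$, whereas you construct an explicit nonzero annihilating functional $\Psi\in(\ell^\infty(\C^m))^*$ via a Banach limit, exploiting the telescoping identity $\phi(\gl)^{-k}\Aod R(\gl,\Ad)y_k=w_k-w_{k-1}$ with $(w_k)$ bounded. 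This is clean and arguably more conceptual (it names the obstruction as a shift-invariant mean), at the cost of invoking the axiom of choice; the paper's argument is more elementary and has the side benefit of exhibiting a concrete element outside the closure of the range. Either way, the bulk of the theorem still needs to be supplied.
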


\begin{rem}
  \label{rem:AspecA0eigs}
  The points $\gs(\Ad)$ may be in either $\gs(A)$ or $\rho(A)$ depending on the matrices $\Ad$ and $\Aod$. Note for instance that, given $\lambda\in\CC$,  any vector $x=(x_k)_{k\in\Z}$ with $x_0\in\ker(\gl-A_0)\cap\ker(A_1)$ and $x_k=0$ for $k\neq 0$ satisfies $x\in \ker(\gl-A)$. In particular, $\gl\in\sigma(A_0)\cap\sigma(A)$ whenever $\ker(\gl-A_0)\cap\ker(A_1)\ne\{0\}$. Moreover, if $\lambda\in\CC$ is such that $\ran(\gl-\Ad)+\ran(\Aod)\neq \C^m$ then
it is easy to see that any sequence $(x_k)_{k\in\ZZ}\in X$ such that $x_k\not\in\ran(\gl-\Ad)+\ran(\Aod)$ for some $k\in\ZZ$ has an open neighbourhood which is disjoint from $\ran(\gl-A)$, so $\ran(\gl-A)$ cannot be dense in $X$ and once again  $\lambda\in\sigma(A_0)\cap\sigma(A)$. In Sections~\ref{sec:plat} and \ref{sec:robots} we will see examples in which, by contrast, we have  $\sigma(A_0)\cap\sigma(A)=\emptyset$.
\end{rem}

\begin{proof}[Proof of Theorem~\textup{\ref{thm:Aspec}}]
 We begin by showing that every $\gl\in\CC\setminus\gs(A_0)$ such that $\abs{\phi(\gl)}\ne1$ belongs to $\rho(A)$. Indeed, given $\gl\in \CC\setminus\gs(\Ad)$,  let $R_\gl = R(\gl,\Ad)$. Supposing first that  $|\phi(\lambda)|<1$, we consider the operator $R(\gl)\in \B(X)$ given by
  \eqn{
  \label{eq:RAformula}
  R(\gl)x
  &= \left( R_\gl x_k + R_\gl \Aod R_\gl \sum_{\ell=0}^\infty \phi(\gl)^\ell x_{k-\ell-1}\right)_{k\in\Z}
  }
 for all $x=(x_k)_{k\in\Z}\in X$, noting that this gives a well-defined element of $X$ by Young's inequality. Using the fact that 
  $(\Aod R_\gl)^\ell = \phi(\gl)^{\ell-1}\Aod R_\gl$ for all $\ell\in\N$ as a consequence of assumption~(A1), it is straightforward to verify that $(\lambda-A)R(\lambda)x=R(\lambda)(\lambda-A)x=x$ for all $x\in X$, and hence $\lambda\in \rho(A)$ and $R(\gl,A)=R(\gl)$.
A completely analogous argument goes through for $\gl\in\CC\setminus\gs(A_0)$ such that $|\phi(\gl)|>1$, with the only difference that the operator $R(\gl)\in \B(X)$ is now defined by 
  \eq{ 
R(\gl)x
= \left( R_\gl x_k - R_\gl \Aod R_\gl\sum_{\ell=0}^\infty \phi(\gl)^{-\ell-1} x_{k+\ell} \right)_{k\in\Z}
  }
  for all $x\in X$.
This shows that $\gs(A)\setminus \gs(\Ad)\subset \Omega_\phi$.

Suppose now that $1\leq p<\infty$ and let $\gl\in\Omega_\phi$. We will first show that $\gl\notin \gs_p(A)$.
  To this end, let $x\in X$ be such that $(\gl-A)x=0$.
 Then a simple calculation shows that $x_k=\phi(\gl)^{k-\ell-1}R_\gl \Aod  x_{\ell}  $  for all $k,\ell\in\ZZ$ with $k>\ell$, and in particular $\|x_k\|=\|R_\gl \Aod  x_{\ell} \|$ for all $k>\ell$. Hence the assumption that $x\in X$ implies that $x=0$ and therefore $\gl\notin \gs_p(A),$ as required.
In order to  show that $\gl\in\gs_{ap}(A)$,
choose $y_0\in \C^m$ such that $\Aod y_0\neq 0$ and, for $n\in\N$, define the sequence $x^n = (x_k^n)_{k\in\ZZ}\in X$ by
$$ x_k^n = \frac{\phi(\gl)^k R_\gl \Aod y_0}{(2n+1)^{1/p}\norm{R_\gl \Aod y_0}},\quad \abs{k}\leq n,$$
and $x_k^n=0$ otherwise.
  Then $\norm{x^n}_p=1$ for all $n\in\N$,
and 
a direct computation shows that
  \eq{
  \norm{ (\gl-A)x^n}^p 
  =\frac{\norm{y_0}^p + \norm{\Aod R_\gl \Aod y_0}^p}{(2n+1)\norm{R_\gl \Aod y_0}^p}
  \rightarrow 0,\quad n\to\infty.
  }
Thus $\gl\in\gs_{ap}(A)$, which establishes (a).

Now suppose that $p=\infty$ and let $\gl\in \Omega_\phi$.
We will prove that \eqref{eq:eigenspace} holds,  from which (b) follows.
Note first that if $x_0\in\ran(R_\gl \Aod)$ 
then
 a simple calculation shows that $(\phi(\gl)^k x_0)_{k\in\Z}\in \ker(\gl-A)$. 
On the other hand,
if $x=(x_k)_{k\in\ZZ}\in \ker(\gl-A)$, then 
$$(\gl-\Ad)x_k-\Aod x_{k-1}=0$$ 
and hence $x_k=R_\gl \Aod x_{k-1}\in \ran(R_\gl A_1)$ for all $k\in\Z$. Since  
$$x_k = (R_\gl \Aod)^2 x_{k-2}
=\phi(\gl) R_\gl \Aod x_{k-2}
=\phi(\gl)x_{k-1},\quad k\in\ZZ,$$
by assumption (A1)
we obtain that $x_k=\phi(\gl)^k x_0$ for all $k\in\ZZ$. Thus (b) follows, and by combining (a) and (b) with the fact that  $\gs(A)\setminus \gs(\Ad)\subset \Omega_\phi$ we obtain \eqref{eq:spec_gen}. It remains to prove the final statement.

Suppose first that $1<p<\infty$ and let $q=p(p-1)^{-1}$ be the H\"older conjugate of $p$. Moreover let $\gl\in\Omega_\phi$ and  that $y=(y_k)_{k\in\ZZ}\in X^\ast=\ell^q(\CC^m)$ is such that $\dualpair{(\gl-A)x}{y}=0$ for all $x\in X$. Then $y\in\ker(\gl-A')$, where the dual operator $A'$ of $A$ is given by $A'y=(A_0^Ty_k+A_1^Ty_{k+1})_{k\in\ZZ}$ for all $y=(y_k)_{k\in\ZZ}\in X^*$.
Since by assumption~(A1) we have  $(\Aod R_\gl \Aod)^T = {\phi(\gl)}\Aod^T,$
a direct computation shows that 
\eq{
y_k= {\phi(\gl)}^{\ell-k-1}R_\gl^T \Aod^T y_{\ell}
}
for all $k,\ell\in\ZZ$ with $k<\ell$. As in the above argument showing that $\gl\notin \gs_p(A)$ we obtain that $y=0$,  and hence  ${\ran(\gl-A)}$ is dense in $X$ by a standard corollary of the Hahn-Banach theorem. On the other hand, if $p=1$ and $\gl\in\Omega_\phi$ we can consider the element $y=(y_k)_{k\in\ZZ}\in X^*=\ell^\infty(\CC^m)$ with entries 
\eq{
y_k={\phi(\gl)}^{-k} R_\gl^T A_1^T y_0,\quad k\in\Z,
}
where $y_0\in\C^m$ is chosen in such a way that $A_1^T y_0\neq 0$. A simple verification shows $y\in\ker(\lambda-A')$ and hence that $\dualpair{(\gl-A)x}{y}=0$ for all $x\in X$, so ${\ran(\gl-A)}$ cannot be dense in $X$. Finally, suppose that $p=\infty$ and that $\gl\in\Omega_\phi$. Let $y=(\phi(\gl)^ky_0)_{k\in\Z}\in X$, where $y_0\in\C^m$ is such that $R_\gl\Aod R_\gl y_0\neq 0$. We show that $y$ lies outside the closure of ${\ran(\gl-A)}$. Indeed, let  $0<\eps <\norm{R_\gl \Aod R_\gl y_0}/\norm{R_\gl \Aod R_\gl}$ and suppose for the sake of contradiction that there exists $x\in X$ such that
\eq{
\norm{(\gl-A)x-y} = \sup_{k\in\Z} \; \norm{(\gl-\Ad)x_k-\Aod x_{k-1}-y_k}< \eps.
}
Let $z_k=(\gl-\Ad)x_k-\Aod x_{k-1}-y_k $, so that $\|z_k\|<\varepsilon$  for all $k\in\ZZ$.
A simple inductive argument shows that for all $n\in\N$ we have
\eq{
x_0 = \phi(\gl)^{n-1}R_\gl A_1x_{-n}+R_\lambda (y_0+z_0)+R_\gl A_1 R_\gl\sum_{\ell=1}^{n-1}\phi(\gl)^{\ell-1}(y_{-\ell}+z_{-\ell}).
}
Since   
\eq{
\biggl\|R_\gl \Aod R_\gl \sum_{\ell=1}^{n} \phi(\gl)^{\ell-1} y_{-\ell}\biggr\|
= n\norm{R_\gl \Aod R_\gl y_0},\quad n\in\NN,
}
we obtain that
\eq{
\norm{x_0}\geq n\big(\norm{R_\gl \Aod R_\gl y_0}-\eps \norm{R_\gl\Aod R_\gl}\big) - \norm{R_\gl }(\|y_0\|+ \eps )  -\norm{R_\gl \Aod} \norm{x}
}
for all $n\in\NN$. However, by the choice of $\varepsilon$ this is absurd. Hence no such $x\in X$ exists and in particular the range of $\gl-A$ is not dense in $X$. This completes the proof.
\end{proof}

The next result establishes a useful estimate for the norm of the resolvent operator in the neighbourhood of singular points.

\begin{prp}
  \label{prp:Rnormestimates}
Fix $1\le p\le\infty$ and $m\in\NN$, and suppose that \textup{(A1)}, \textup{(A2)} hold. If $\gl\in\CC\setminus \gs(\Ad)$ is such that $\abs{\phi(\gl)}\neq 1$, then
  \eq{
  \left|\|R(\lambda,A)\|-\frac{\|R(\lambda,A_0)A_1R(\lambda,A_0)\|}{|1-|\phi(\gl)||}\right|\le\|R(\lambda,A_0)\|.
  }
  In particular, for $\gl_0\in\CC\setminus\gs(A_0)$ such that $|\phi(\lambda_0)|=1$ we have 
  \eq{
 \norm{R(\gl,A)}
\asymp \frac{1}{\abs{1- \abs{\phi(\gl)}}}
  }
  as $\gl\to\gl_0$ in the region $\{\gl\in\CC\setminus\gs(A_0):\abs{\phi(\gl)}\neq 1\}$.
\end{prp}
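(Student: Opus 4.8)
The plan is to build on the explicit resolvent formula obtained in the proof of Theorem~\ref{thm:Aspec}. For $\lambda\in\CC\setminus\sigma(A_0)$ with $|\phi(\lambda)|\ne 1$ that formula exhibits $R(\lambda,A)$ as a sum $R(\lambda,A)=D_\lambda+C_\lambda$, where $D_\lambda$ is the block-diagonal operator $(D_\lambda x)_k=R(\lambda,A_0)x_k$ and $C_\lambda$ is the convolution-type operator given, when $|\phi(\lambda)|<1$, by $(C_\lambda x)_k=M\sum_{\ell\ge 0}\phi(\lambda)^\ell x_{k-\ell-1}$ with $M:=R(\lambda,A_0)A_1R(\lambda,A_0)$, and by the analogous expression $(C_\lambda x)_k=-M\sum_{\ell\ge 0}\phi(\lambda)^{-\ell-1}x_{k+\ell}$ when $|\phi(\lambda)|>1$. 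Since $D_\lambda$ acts on $\ell^p(\CC^m)$ coordinatewise as the matrix $R(\lambda,A_0)$, we have $\|D_\lambda\|=\|R(\lambda,A_0)\|$, and hence the triangle inequality yields $\bigl|\,\|R(\lambda,A)\|-\|C_\lambda\|\,\bigr|\le\|R(\lambda,A_0)\|$. Thus the first assertion of the proposition reduces to the single identity $\|C_\lambda\|=\|M\|\,/\,|1-|\phi(\lambda)||$, valid for every $p\in[1,\infty]$.

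To establish this identity I would rewrite $C_\lambda$, in the case $|\phi(\lambda)|<1$, as $C_\lambda=T\widehat M(I-\phi(\lambda)T)^{-1}$, where $T$ is the bilateral right shift on $\ell^p(\CC^m)$ (an invertible isometry), $\widehat M$ is block-diagonal multiplication by $M$, and the inverse makes sense by the Neumann series; the case $|\phi(\lambda)|>1$ is identical after replacing $T$ by the left shift and $\phi(\lambda)$ by $\phi(\lambda)^{-1}$, and produces $||\phi(\lambda)|-1|$ in the denominator instead. Because $\widehat M$ commutes with $T$ and $T$ is an isometric isomorphism, $\|C_\lambda\|=\|\widehat M(I-\phi(\lambda)T)^{-1}\|$; the upper bound $\|C_\lambda\|\le\|M\|\,(1-|\phi(\lambda)|)^{-1}$ is then immediate from $\|\widehat M\|=\|M\|$ and the Neumann series (equivalently, from Young's inequality applied to the kernel $(\phi(\lambda)^{\ell-1})_{\ell\ge 1}$). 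For the matching lower bound I would restrict to test sequences of the form $x=f\otimes v$ with $f\in\ell^p(\CC)$ and $v\in\CC^m$ a fixed unit vector satisfying $\|Mv\|=\|M\|$; on such sequences the image under $\widehat M(I-\phi(\lambda)T)^{-1}$ has norm exactly $\|M\|$ times the norm of the corresponding scalar operator $(I-\phi(\lambda)T)^{-1}$ applied to $f$, so $\|C_\lambda\|\ge\|M\|\,\|(I-\phi(\lambda)T)^{-1}\|_{\ell^p(\CC)}$. Finally, since the bilateral shift on $\ell^p(\ZZ)$ is an invertible isometry whose spectrum is invariant under multiplication by unimodular scalars (conjugate by the isometric isomorphism $(x_n)\mapsto(e^{in\theta}x_n)$) and therefore equals the whole unit circle, the spectrum of $I-\phi(\lambda)T$ on $\ell^p(\CC)$ is the circle of radius $|\phi(\lambda)|$ about $1$, which lies at distance $1-|\phi(\lambda)|$ from the origin; hence $\|(I-\phi(\lambda)T)^{-1}\|_{\ell^p(\CC)}\ge\mathrm{spr}\bigl((I-\phi(\lambda)T)^{-1}\bigr)=(1-|\phi(\lambda)|)^{-1}$, and the two bounds coincide. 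The one genuinely delicate point here is controlling the norm --- not merely the spectral radius --- of the scalar operator $(I-\phi(\lambda)T)^{-1}$ uniformly over all $p\in[1,\infty]$; the spectral-radius argument just sketched settles it, but one could instead argue by duality together with Riesz--Thorin interpolation down to $p=2$, or by an explicit computation with truncated test sequences.

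For the ``in particular'' statement, fix $\lambda_0\in\CC\setminus\sigma(A_0)$ with $|\phi(\lambda_0)|=1$. By Remark~\ref{rem:ass1} the function $\phi$ is rational with all poles in $\sigma(A_0)$, and $R(\cdot,A_0)$ is analytic off $\sigma(A_0)$, so both $\|R(\lambda,A_0)\|$ and $\|R(\lambda,A_0)A_1R(\lambda,A_0)\|$ are continuous at $\lambda_0$, the latter having a strictly positive limit there since $R(\lambda_0,A_0)$ is invertible and $A_1\ne 0$ by \textup{(A1)}. Consequently, on some punctured neighbourhood of $\lambda_0$ the quantity $\|R(\lambda,A_0)\|$ is bounded above and $\|R(\lambda,A_0)A_1R(\lambda,A_0)\|$ is bounded above and below by positive constants, whereas $|1-|\phi(\lambda)||\to 0$ as $\lambda\to\lambda_0$. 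Feeding this into the first inequality of the proposition shows that the quotient $\|R(\lambda,A_0)A_1R(\lambda,A_0)\|\,/\,|1-|\phi(\lambda)||$ tends to infinity and dominates the bounded error term $\|R(\lambda,A_0)\|$, so that $\|R(\lambda,A)\|\asymp\|R(\lambda,A_0)A_1R(\lambda,A_0)\|\,/\,|1-|\phi(\lambda)||\asymp 1\,/\,|1-|\phi(\lambda)||$ as $\lambda\to\lambda_0$ within the region $\{\lambda\in\CC\setminus\sigma(A_0):|\phi(\lambda)|\ne 1\}$, which is the claim.
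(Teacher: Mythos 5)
Your proposal is correct, and its overall architecture coincides with the paper's: both split $R(\lambda,A)=D(\lambda)+Q(\lambda)$ using the explicit resolvent formula \eqref{eq:RAformula}, observe $\|D(\lambda)\|=\|R(\lambda,A_0)\|$, and reduce the first claim via the triangle inequality to the single identity $\|Q(\lambda)\|=\|R_\lambda A_1R_\lambda\|/\abs{1-\abs{\phi(\lambda)}}$, with the upper bound coming from the Neumann series (Young's inequality). Where you genuinely diverge is in the lower bound for $\|Q(\lambda)\|$. The paper constructs explicit (approximately) norming sequences: for $p=\infty$ the exact maximiser $(e^{ik\theta}y_0)_{k\in\ZZ}$ with $\theta=\arg\phi(\lambda)$, and for $1\le p<\infty$ truncated versions of it, with an $\varepsilon$--$M$--$N$ bookkeeping argument to push the Riesz sums up to $(1-\abs{\phi(\lambda)})^{-1}$. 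You instead factor $Q(\lambda)=T\widehat{M}(I-\phi(\lambda)T)^{-1}$ with $T$ the bilateral shift, restrict to tensors $f\otimes v$ with $\|Mv\|=\|M\|$ to reduce to the scalar operator $(I-\phi(\lambda)T)^{-1}$ on $\ell^p(\ZZ)$, and then bound its norm from below by its spectral radius, computed from the fact that $\sigma(T)$ is the full unit circle (invertible isometry plus rotation invariance under conjugation by $(x_n)\mapsto(e^{in\theta}x_n)$). All the steps check out, including the tensorisation (the $\ell^p(\CC^m)$-norm of $g\otimes w$ factors as $\|g\|_{\ell^p}\|w\|$) and the treatment of the case $\abs{\phi(\lambda)}>1$ via the left shift. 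Your route buys uniformity in $p\in[1,\infty]$ and dispenses entirely with the truncation argument, at the cost of invoking the spectral theory of the bilateral shift; the paper's route is more elementary and recycles the same test sequences already used to exhibit approximate eigenvectors in Theorem~\ref{thm:Aspec}. Your handling of the ``in particular'' statement --- continuity and strict positivity of $\|R_\lambda A_1R_\lambda\|$ at $\lambda_0$ (using invertibility of $R(\lambda_0,A_0)$ and (A1)) against the blow-up of $\abs{1-\abs{\phi(\lambda)}}^{-1}$ --- is exactly the intended deduction.
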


\begin{proof}
As in the proof of Theorem~\ref{thm:Aspec}, we let $R_\lambda=R(\gl,A_0)$ for $\gl\in\CC\setminus\gs(A_0)$. We consider the case where $0<\abs{\phi(\gl)}<1$; the case $|\phi(\gl)|>1$ follows similarly, as in the proof of Theorem~\ref{thm:Aspec}. From~\eqref{eq:RAformula}  we see that for $\gl\in\CC\setminus\gs(A_0)$ such that $\abs{\phi(\gl)}<1$ we have 
  $R(\gl,A) = D(\gl) + Q(\gl)$, where $D(\gl) x= (R_\gl x_k)_{k\in\ZZ}$  and 
  \eq{
  Q(\gl)x = \left(R_\gl\Aod R_\gl\sum_{\ell=0}^\infty \phi(\gl)^\ell  x_{k-\ell-1}\right)_{k\in\ZZ}
  }
  for all $x = (x_k)_{k\in\Z}\in X$. Note that $\norm{D(\gl)}= \norm{R_\gl}$, so the result will follow from the triangle inequality once we have established that
  \eqn{\label{eq:Qest}
  \norm{Q(\gl)}= \frac{\norm{R_\gl\Aod R_\gl}}{1- \abs{\phi(\gl)}}.
  }
In fact, since 
$$\norm{Q(\gl)}\leq \frac{\norm{R_\gl\Aod R_\gl}}{1- \abs{\phi(\gl)}}$$ 
for $1\leq p\leq \infty$ by a straightforward estimate, it suffices to prove the converse inequality.

Suppose first that  $p=\infty$ and consider the sequence $x=(e^{ik\theta} y_0)_{k\in\Z}\in X$, where $\theta=\arg \phi(\gl)$ and $y_0\in\C^m$ is such that $\norm{x_0}=1$ and $\norm{R_\gl \Aod R_\gl y_0} = \norm{R_\gl \Aod R_\gl}$. Then $\norm{x}=1$ and
  \eq{
  \norm{Q(\gl)x} 
  &= \sup_{k\in\Z}\; \Normgg{ R_\gl \Aod R_\gl\sum_{\ell=0}^\infty \phi(\gl)^\ell  x_{k-\ell-1}}
  = \frac{\norm{ R_\gl \Aod R_\gl }}{1-\abs{\phi(\gl)}},
  }
thus establishing \eqref{eq:Qest}. Now suppose that $1\leq p<\infty$. Once again let $\theta = \arg\phi(\gl)$ and let $y_0\in\C^m$ be such that $\norm{y_0}=1$ and $\norm{R_\gl \Aod R_\gl y_0} = \norm{R_\gl \Aod R_\gl}$. 
Furthermore, let $\eps\in(0,1)$ and let $M,N\in\N$ be such that 
$$\sum_{\ell=M+1}^\infty \abs{\phi(\gl)}^\ell<\eps\quad\mbox{and}\quad\frac{N-M}{N}>(1-\eps)^p.$$
 Consider the sequence $x = (x_k)_{k\in\Z} \in X$ with entries $x_k=e^{i k \theta}\alpha_ky_0$, where $\alpha_k =N^{-1/p}$ for $ -N\leq k\leq -1$ and $\alpha_k=0$ otherwise. Then $\norm{x}=1$ and 
\eq{
\norm{Q(\gl)x}^p
= \norm{R_\gl\Aod R_\gl}^p  \sum_{k\in\Z}  \left(\sum_{\ell=0}^\infty \abs{\phi(\gl)}^\ell  \ga_{k-\ell-1}\right)^p,
}
and hence by our choices of $M$ and $N$ we obtain that
\eq{
\norm{Q(\gl)x}&\ge\frac{ \norm{R_\gl\Aod R_\gl} }{N^{1/p}}\left(\sum_{M-N+1\le k\le 0} \left(\sum_{\ell=0}^{M} \abs{\phi(\gl)}^\ell  \right)^p\right)^{1/p}\\&>(1-\eps)\frac{\norm{R_\gl\Aod R_\gl} }{1-|\phi(\lambda)|}-\eps(1-\eps)\norm{R_\gl\Aod R_\gl}.
}
Since $\eps\in(0,1)$ was arbitrary, \eqref{eq:Qest} follows and the proof is complete.
\end{proof}

We conclude this section with a  refinement of  Proposition~\ref{prp:Rnormestimates} in an important special case.

\begin{lem}\label{lem:even}
  Fix $1\le p\le\infty$ and $m\in\NN$, and suppose that \textup{(A1)}, \textup{(A2)} hold, and that $0\in\Omega_\phi\subset\CC_-\cup\{0\}$.   Then
  there exists an even integer $n\ge2$ such that $
  1-\abs{\phi(is)}\asymp |s|^n$
  as $\abs{s}\to 0$.
\end{lem}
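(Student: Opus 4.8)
The plan is to reduce everything to an elementary study of the function $g(s):=\abs{\phi(is)}^2$ for small real $s$. Since $0\in\Omega_\phi$ we have $0\notin\gs(A_0)$, so the rational function $\phi$ is holomorphic near $0$; writing $\phi=P/Q$ with polynomials $P,Q$ one checks that $g$ is a rational function of $s$ with real coefficients and no pole at $s=0$, hence real-analytic near $0$, with $g(0)=\abs{\phi(0)}^2=1$. Moreover $g$ is not identically $1$ near $0$: otherwise $g\equiv1$ as a rational function of $s$, forcing $\abs{\phi(is)}=1$ for every real $s$ with $is\notin\gs(A_0)$, and since $\gs(A_0)$ is finite this would exhibit a point $is\in\Omega_\phi$ with $s\neq0$ real, contradicting $\Omega_\phi\subset\CC_-\cup\{0\}$. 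Hence there are a smallest integer $n\ge1$ and a constant $c\neq0$ with $g(s)=1+cs^n+O(s^{n+1})$ as $s\to0$.

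The crucial step is to show that $n$ is even. Suppose instead $n$ is odd. Then $g-1$ changes sign at $0$, so there exist arbitrarily small real $s_0\neq0$ with $g(s_0)>1$, i.e.\ $\abs{\phi(is_0)}>1$. As $is_0$ is not a pole of $\phi$, the modulus $\abs{\phi}$ exceeds $1$ on a whole neighbourhood of $is_0$, and since $\gs(A_0)$ is finite this neighbourhood meets the open connected set $U:=\CC_+\setminus\gs(A_0)$; pick $\lambda_0\in U$ with $\abs{\phi(\lambda_0)}>1$. On the other hand $\abs{\phi(\lambda)}\to0$ as $\abs{\lambda}\to\infty$ by Remark~\ref{rem:ass1}, so there is $\lambda_1\in U$ with $\abs{\phi(\lambda_1)}<1$. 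Joining $\lambda_0$ to $\lambda_1$ by a path in $U$ and applying the intermediate value theorem to $\abs{\phi}$ along it yields a point of $\Omega_\phi$ lying in $\CC_+$, contradicting $\Omega_\phi\subset\CC_-\cup\{0\}$. Hence $n$ is even, and since $n\ge1$ we conclude $n\ge2$.

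It then only remains to transfer the expansion of $g$ to $\abs{\phi(is)}$. As $g\ge0$ and $g(s)\to1$, the square root $\sqrt{g}$ is smooth near $0$, and expanding $\sqrt{1+u}$ with $u=cs^n+O(s^{n+1})$ gives $\abs{\phi(is)}=\sqrt{g(s)}=1+\tfrac{c}{2}s^n+O(s^{n+1})$ (the $u^2$ term being absorbed since $2n\ge n+1$), so that $1-\abs{\phi(is)}=-\tfrac{c}{2}s^n\bigl(1+O(s)\bigr)$ and hence $\bigl|1-\abs{\phi(is)}\bigr|\asymp\abs{s}^n$ as $\abs{s}\to0$, as required.

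I would expect the main obstacle to be the middle paragraph: the parity of the vanishing order is precisely where the geometric hypothesis $\Omega_\phi\subset\CC_-\cup\{0\}$ enters, and it has to be combined with the decay of $\phi$ at infinity and a connectedness/intermediate-value argument in the slit half-plane $\CC_+\setminus\gs(A_0)$. The first and third paragraphs are routine real-analytic bookkeeping.
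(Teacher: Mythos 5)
Your argument is correct and rests on the same underlying idea as the paper's proof: the real-analytic function $s\mapsto\abs{\phi(is)}^2-1$ vanishes at $s=0$ and cannot take positive values near $0$, so its vanishing order is even. The paper implements this slightly more concretely: writing $\phi=p/q$ with $p,q$ coprime and $q(0)\ne0$, it observes that
\[
1-\abs{\phi(is)}=\frac{\abs{q(is)}^2-\abs{p(is)}^2}{\abs{q(is)}\bigl(\abs{p(is)}+\abs{q(is)}\bigr)},
\]
so that everything reduces to the real polynomial $r(s)=\abs{q(is)}^2-\abs{p(is)}^2$, which vanishes at $0$ and is positive for $s\ne0$, hence equals $s^nr_0(s)$ with $n$ even and $r_0(0)>0$; the positivity of $r$ is obtained by an intermediate value argument along the imaginary axis itself (using $\abs{\phi(is)}\ne1$ for $s\ne0$ and $\abs{\phi(is)}\to0$ as $\abs{s}\to\infty$), whereas you run the intermediate value argument in the slit half-plane $\CC_+\setminus\sigma(A_0)$. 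Both routes work, and yours has the small advantage of only using data near $0$; the paper's avoids the square-root expansion and delivers the sign of $1-\abs{\phi(is)}$ for free. That sign is the one loose end in your write-up: you conclude only $\bigl|1-\abs{\phi(is)}\bigr|\asymp\abs{s}^n$, whereas the lemma (with the paper's convention that $\asymp$ relates positive quantities) asserts $1-\abs{\phi(is)}\asymp\abs{s}^n$, which also requires $c<0$. This follows from the argument you already have: your half-plane contradiction shows there is no arbitrarily small $s_0\ne0$ with $g(s_0)>1$, so once $n$ is known to be even the leading coefficient $c$ must be negative; adding that one sentence completes the proof.
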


\begin{proof}
  The rational function $\phi$ is of the form $\phi(\gl)=p(\gl)/q(\gl)$, where $p$ and $q$ are coprime polynomials and the roots of $q$ are contained in the set $\gs(\Ad)\subset \C_-$. 
Since $\abs{\phi(0)}=1$ and $|\phi(\lambda)|\to0$ as $|\lambda|\to\infty$, we have that $\abs{\phi(is)}<1$ for $s\ne0$ and hence
  \eq{
  1-\abs{\phi(is)}
  = \frac{\abs{q(is)}^2 - \abs{p(is)}^2}{\abs{q(is)}(\abs{p(is)}+\abs{q(is)})},\quad s\ne0.
  }
  The denominator of the right-hand side
  is bounded from above and from below near $s=0$. Thus the rate at which $1-\abs{\phi(is)}\to 0$ is equal to that at which $r(s)=\abs{q(is)}^2 - \abs{p(is)}^2\to 0$ as $\abs{s}\to 0$. Since $r$ is a real polynomial satisfying $r(0)=0$ and $r(s)>0$ for $s\ne0$, we  have that $r(s)=s^n r_0(s)$, $s\in\RR$, where $n\in \N$ is even and $r_0$ is a polynomial satisfying $r_0(0)> 0$. The claim now follows.
\end{proof}

\begin{rem}\label{rem:nphi}
Note that $n=n_\phi$ is determined by the characteristic function $\phi$. We call $n_\phi$ the \emph{resolvent growth parameter}.
\end{rem}

\section{Uniform boundedness of the semigroup}
\label{sec:bound}

Consider our general model and assume that assumptions (A1) and (A2) are satisfied. In this section we present conditions 
on the characteristic function $\phi$ under which the semigroup generated by $A$ is uniformly bounded or even contractive.
Since uniform boundedness necessarily requires that $\gs(A)\subset \overline{\C_-}$,  Theorem~\ref{thm:Aspec} shows that it is necessary to assume that $\Omega_\phi \subset \overline{\C_-}$, where $\Omega_\phi=\{\lambda\in\CC\setminus \gs(A_0):|\phi(\gl)|=1\}$. Note also that, since $|\phi(\lambda)|\to0$ as $|\lambda|\to\infty$  by  Remark~\ref{rem:ass1}, we must have $\abs{\phi(\gl)}<1$ for all $\gl\in\C_+$ in this case. The following theorem is the main result of this section.

\begin{thm}
  \label{thm:Aunifbdd}
Let $1\le p\le\infty$ and $m\in\NN$. Suppose that assumptions \textup{(A1)}, \textup{(A2)} hold, that $\sigma(A_0)\subset\CC_-$ and that $\Omega_\phi \subset \overline{\C_-}$.
If  furthermore  
\eqn{
\label{eq:unifbddabscondALTphi}
\sup_{0<\gl\leq 1}\frac{\gl}{1-\abs{\phi(\gl)}}<\infty
\quad\mbox{and}\quad
  \sup_{n\in\N}\, \sup_{\gl>0}\; \frac{\gl^{n+1}}{n!} \sum_{\ell=1}^\infty    \Abs{\ddb[n]{\gl}\phi(\gl)^\ell} <\infty,
  }
  then the semigroup generated by $A$ is uniformly bounded. 
If 
  \eqn{
  \label{eq:Acontrcondition}
  \sup_{\lambda>0} \left(\gl \norm{R(\gl,\Ad)} + \gl \frac{\norm{R(\gl,\Ad)\Aod R(\gl,\Ad)}}{1-\abs{\phi(\gl)}}\right) \leq 1,
  }
  then the semigroup generated by $A$ is contractive.  
\end{thm}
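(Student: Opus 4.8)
The plan is to prove uniform boundedness via the Gearhart--Prüss-type criterion—or more directly, by a resolvent estimate on the imaginary axis combined with a suitable quadrature/summation argument. Actually, the cleanest route is to establish uniform boundedness of $e^{tA}$ by writing the semigroup explicitly through the Dunford--Taylor integral and estimating, or alternatively by verifying that $A$ generates a bounded semigroup through a direct estimate on $\|e^{tA}\|$. Given the structure uncovered in Section~\ref{sec:spec}—namely the block-convolution form of $R(\lambda,A)$ in \eqref{eq:RAformula}—the natural approach is to exploit the decomposition $R(\lambda,A)=D(\lambda)+Q(\lambda)$ from the proof of Proposition~\ref{prp:Rnormestimates} and then to reconstruct $e^{tA}$ via the inverse Laplace transform, or more cheaply to use the generation theorem in the form: a densely defined operator $A$ with $\C_+\subset\rho(A)$ generates a bounded semigroup provided one has the Hille--Yosida-type bounds $\sup_{\lambda>0}\sup_{n\geq 0}\frac{\lambda^{n+1}}{n!}\|\frac{d^n}{d\lambda^n}R(\lambda,A)\|<\infty$. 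Since $A$ is bounded here, $e^{tA}$ exists automatically; what we must show is that it is \emph{uniformly} bounded, and the Hille--Yosida estimate on derivatives of the resolvent along the positive real axis is precisely equivalent to that.

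\textbf{Key steps.} First I would recall that since $\sigma(A_0)\subset\CC_-$ and $\Omega_\phi\subset\overline{\CC_-}$, Theorem~\ref{thm:Aspec} gives $\sigma(A)\subset\overline{\CC_-}$, and moreover (as noted before the theorem) $|\phi(\lambda)|<1$ for all $\lambda\in\CC_+$, so $\CC_+\subset\rho(A)$ and the formula \eqref{eq:RAformula} is valid there. Second, write $R(\lambda,A)=D(\lambda)+Q(\lambda)$ with $D(\lambda)x=(R_\lambda x_k)_{k\in\Z}$ and $Q(\lambda)x=\bigl(R_\lambda A_1 R_\lambda\sum_{\ell\geq 0}\phi(\lambda)^\ell x_{k-\ell-1}\bigr)_{k\in\Z}$. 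Since $D(\lambda)$ is a diagonal (pointwise) operator and $\sigma(A_0)\subset\CC_-$, the family $(D(\lambda))_{\lambda>0}$ satisfies the Hille--Yosida bounds trivially: $\frac{\lambda^{n+1}}{n!}\|\frac{d^n}{d\lambda^n}D(\lambda)\| = \frac{\lambda^{n+1}}{n!}\|\frac{d^n}{d\lambda^n}R(\lambda,A_0)\| \leq C$ uniformly in $n$ and $\lambda>0$, because $A_0$ generates a bounded (indeed exponentially stable) semigroup on $\CC^m$. Third, differentiate $Q(\lambda)$ $n$ times. Using the Leibniz rule, $\frac{d^n}{d\lambda^n}\bigl[R_\lambda A_1 R_\lambda\,\phi(\lambda)^\ell\bigr]$ splits into terms involving derivatives of $R_\lambda A_1 R_\lambda$ (controlled, as before, by the bounded holomorphic functional calculus for $A_0$) times derivatives of $\phi(\lambda)^\ell$. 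The convolution structure means $\|\frac{d^n}{d\lambda^n}Q(\lambda)\|\leq \sum_{\ell\geq 1}$ (something) involving $\sum_{j}\binom{n}{j}\|\frac{d^{n-j}}{d\lambda^{n-j}}(R_\lambda A_1 R_\lambda)\|\,|\frac{d^j}{d\lambda^j}\phi(\lambda)^\ell|$, and after using $\sup_\lambda \lambda^{m}\|\frac{d^m}{d\lambda^m}(R_\lambda A_1 R_\lambda)\|/m! <\infty$ one is left precisely with a bound of the form $\frac{\lambda^{n+1}}{n!}\sum_{\ell\geq 1}|\frac{d^n}{d\lambda^n}\phi(\lambda)^\ell|$ plus lower-order pieces, all of which are controlled by the second hypothesis in \eqref{eq:unifbddabscondALTphi}; the first hypothesis $\sup_{0<\lambda\leq1}\lambda/(1-|\phi(\lambda)|)<\infty$ handles the $\ell$-summation near $\lambda=0$ via the geometric-series bound $\sum_{\ell\geq 1}|\phi(\lambda)|^{\ell}\leq|\phi(\lambda)|/(1-|\phi(\lambda)|)$, which is $O(\lambda^{-1})$ there. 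Combining, $\sup_{n}\sup_{\lambda>0}\frac{\lambda^{n+1}}{n!}\|\frac{d^n}{d\lambda^n}R(\lambda,A)\|<\infty$, so the generation theorem yields a uniformly bounded semigroup.

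\textbf{Contractivity.} For the last claim, I would use the Lumer--Phillips / Hille--Yosida characterisation of contraction semigroups: a bounded operator $A$ generates a contraction semigroup on a Banach space if and only if $\lambda R(\lambda,A)$ is a contraction for all $\lambda>0$, i.e.\ $\|\lambda R(\lambda,A)\|\leq 1$ for all $\lambda>0$. By Proposition~\ref{prp:Rnormestimates}, $\|R(\lambda,A)\|\leq \|R(\lambda,A_0)\| + \frac{\|R(\lambda,A_0)A_1R(\lambda,A_0)\|}{1-|\phi(\lambda)|}$ for $\lambda>0$ (here $|\phi(\lambda)|<1$), so condition \eqref{eq:Acontrcondition} gives exactly $\lambda\|R(\lambda,A)\|\leq 1$ for all $\lambda>0$, whence $A$ generates a contraction semigroup.

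\textbf{Main obstacle.} The delicate point is bookkeeping in Step three: after the Leibniz expansion one must verify that \emph{every} cross-term—products of a derivative of $R_\lambda A_1 R_\lambda$ with a derivative of $\phi(\lambda)^\ell$, summed over $\ell$—is absorbed by the two suprema in \eqref{eq:unifbddabscondALTphi} together with the scalar Hille--Yosida bound for $A_0$, uniformly in $n$; in particular one needs the combinatorial identity $\sum_{j=0}^n\binom{n}{j}\lambda^{n+1}\cdot\frac{1}{j!}|\frac{d^j}{d\lambda^j}(\cdots)|\cdot\frac{1}{(n-j)!}|\frac{d^{n-j}}{d\lambda^{n-j}}(\cdots)|$ to factorise cleanly, which works because $\binom{n}{j}/n! = 1/(j!(n-j)!)$ and because $\lambda^{n+1}=\lambda^{j}\cdot\lambda^{n-j}\cdot\lambda$, so the two factors decouple into the two separate suprema (one for $A_0$-resolvent derivatives, one for $\phi$-power derivatives) with a leftover factor of $\lambda$ that is tamed near zero by the first hypothesis and near infinity by $|\phi(\lambda)|\to0$. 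Carrying this out rigorously is the only real work; everything else is a direct appeal to the generation theorem and to Proposition~\ref{prp:Rnormestimates}.
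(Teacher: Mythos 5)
Your proposal is correct and follows essentially the same route as the paper: the Hille--Yosida theorem in its resolvent-derivative form, the decomposition $R(\lambda,A)=D(\lambda)+Q(\lambda)$ with the diagonal part controlled by the exponential stability of $A_0$, a Leibniz expansion of the derivatives of $\phi(\lambda)^\ell R_\lambda A_1 R_\lambda$ in which the $k=0$ term is absorbed by the first condition in \eqref{eq:unifbddabscondALTphi} and the remaining terms by the second, and Proposition~\ref{prp:Rnormestimates} plus Hille--Yosida for contractivity. The only cosmetic difference is that the paper phrases the criterion via $\|\lambda^n R(\lambda,A)^n\|$ and then converts to derivatives, whereas you state the derivative form directly; these are identical.
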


\begin{proof}
Both parts of the result are consequences of the Hille-Yosida theorem. We thus aim to establish a uniform upper bound for $\|\gl^n R(\gl,A)^n\|$ as $\gl>0$ and $n\in\NN$ are allowed to vary. 
For $\lambda>0$ we let $R_\gl=R(\gl,A_0)$.   Then by \eqref{eq:RAformula} in the proof of Theorem~\ref{thm:Aspec} and by standard properties of resolvent operators we have that
\eq{
R(\gl,A)^{n} x
= \left( R_\gl^{n} x_k \right)_{k\in\Z}
+ \left( \frac{(-1)^{n-1}}{(n-1)!}\sum_{\ell=0}^\infty \ddb[n-1]{\gl} \big(\phi(\gl)^\ell R_\gl \Aod R_\gl\big)x_{k-\ell-1} \right)_{k\in\Z}
}
for all $x=(x_k)_{k\in\Z}\in X$, and hence
\eqn{\label{eq:res_bound}
\|\gl^{n}R(\gl,A)^{n}\|\le \|\gl^{n}R_\gl^{n}\|+\frac{\gl^{n}}{(n-1)!}\sum_{\ell=0}^\infty \left\|\ddb[n-1]{\gl} \big(\phi(\gl)^\ell R_\gl \Aod R_\gl\big)\right\|}
 for  all $\gl>0$ and all $n\in\NN$. Now since $\sigma(A_0)\subset\CC_-$, there exists $\varepsilon>0$ such that $A_0+\varepsilon$ generates a uniformly bounded semigroup, and in particular 
\eqn{\label{eq:A0bound}
\sup_{n\in\NN}\sup_{\gl>0}\|(\gl+\eps)^nR_\gl^n\|<\infty.
}
Thus the first term on the right-hand side of \eqref{eq:res_bound} is uniformly bounded as $\gl>0$ and $n\in\NN$ are allowed to vary. It remains to consider the second term. Let $\phi_\ell(\lambda)=\phi(\lambda)^\ell$ and observe that, for $\gl>0$ and $\ell,n\in\ZZ_+$, 
\eq{
\frac{1}{n!} \ddb[n]{\gl} \big(\phi(\gl)^\ell R_\gl \Aod R_\gl\big)
=\sum_{k=0}^{n} \frac{\phi_\ell^{(k)}(\gl)}{k!} \frac{1}{(n-k)!} \ddb[n-k]{\gl}(R_\gl\Aod R_\gl)
}
and  by \eqref{eq:A0bound}
\eq{
\left\|\frac{1}{n!} \ddb[n]{\gl}(R_\gl\Aod R_\gl) \right\|
= \bigg\|\sum_{j=0}^{n}  R_\gl^{j+1} \Aod  R_\gl^{n-j+1}\bigg\|\lesssim \frac{n+1}{(\gl+\varepsilon)^{n+2}}
}
 for all $\gl>0$ and $n\in\ZZ_+$. It follows that 
 \eq{
 \frac{\gl^{n}}{(n-1)!}\sum_{\ell=0}^\infty \left\|\ddb[n-1]{\gl} \big(\phi(\gl)^\ell R_\gl \Aod R_\gl\big)\right\|\lesssim 
\gl^{n}\sum_{k=0}^{n-1} \sum_{\ell=0}^\infty 
\frac{\abs{\phi_\ell^{(k)}(\gl)}}{k!} \frac{n-k}{(\gl+\eps)^{n-k+1}}
 }
 for all $\gl>0$ and $n\in\NN$. Using the first part of assumption \eqref{eq:unifbddabscondALTphi} for the interval $0<\gl\le1$ and the fact that $\sup_{\gl>1}|\phi(\gl)|<1$ for the interval $1<\gl<\infty$, 
 it is straightforward to see that the first term on the right-hand side, corresponding to $k=0$, is uniformly bounded above by
 $$\sup_{n\in\N}\, \sup_{\gl>0}\; \frac{1}{1-|\phi(\gl)|}\frac{n\gl^n}{(\gl+\eps)^{n+1}}<\infty.$$
Using the second part of assumption \eqref{eq:unifbddabscondALTphi} the remaining terms on the right-hand side can be estimated, for all $\lambda>0$ and $n\in\NN$, by
\eq{
\gl^{n}\sum_{k=1}^{n-1} \sum_{\ell=0}^\infty 
\frac{\abs{\phi_\ell^{(k)}(\gl)}}{k!} \frac{n-k}{(\gl+\eps)^{n-k+1}}\lesssim \sum_{k=1}^{n-1}
 \frac{k\gl^{k-1}}{(\gl+\eps)^{k+1}} \le\eps^{-2}.
}
Combining the last two estimates with \eqref{eq:A0bound} in \eqref{eq:res_bound} shows that 
$$\sup_{n\in\N}\, \sup_{\gl>0}\; \|\gl^n R(\gl,A)^n\|<\infty,$$
and hence the semigroup generated by $A$ is uniformly bounded by the Hille-Yosida theorem.

For the second statement we note that if~\eqref{eq:Acontrcondition} holds, then Proposition~\ref{prp:Rnormestimates} shows that 
  $\gl\norm{R(\gl,A)}\leq 1$ for all $\gl>0$, and thus the semigroup generated by $A$ is contractive by the Hille-Yosida theorem.
\end{proof}

The next lemma shows that the assumptions in \eqref{eq:unifbddabscondALTphi} are satisfied in a simple but important special case.

\begin{lem}
  \label{lem:repeated}
Let $\zeta>0$ and $k\in\NN$ be given, and suppose that
$$\phi(\lambda)=\frac{\zeta^k}{(\lambda+\zeta)^k},\quad \lambda\in\CC\setminus\{-\zeta\}.$$
Then both conditions in \eqref{eq:unifbddabscondALTphi} are satisfied.
\end{lem}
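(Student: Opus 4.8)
The plan is to verify the two conditions in \eqref{eq:unifbddabscondALTphi} separately, in each case by a direct computation exploiting the explicit form of $\phi$.

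For the first condition, I would observe that $\phi(\gl)\in(0,1)$ for every $\gl>0$, so that $1-\abs{\phi(\gl)}=1-\phi(\gl)=\bigl((\gl+\zeta)^k-\zeta^k\bigr)(\gl+\zeta)^{-k}$. Factoring $(\gl+\zeta)^k-\zeta^k=\gl\sum_{j=0}^{k-1}(\gl+\zeta)^j\zeta^{k-1-j}\ge k\zeta^{k-1}\gl$ and using $(\gl+\zeta)^{-k}\ge(1+\zeta)^{-k}$ on $(0,1]$, one obtains $1-\abs{\phi(\gl)}\ge k\zeta^{k-1}(1+\zeta)^{-k}\gl$, and hence $\gl/(1-\abs{\phi(\gl)})\le(1+\zeta)^k/(k\zeta^{k-1})$ for all $0<\gl\le1$, which is the first bound.

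For the second condition I would compute the derivatives explicitly. Since $\phi(\gl)^\ell=\zeta^{k\ell}(\gl+\zeta)^{-k\ell}$, the standard formula for the derivatives of a negative power yields $\ddb[n]{\gl}\phi(\gl)^\ell=(-1)^n\zeta^{k\ell}\frac{(k\ell+n-1)!}{(k\ell-1)!}(\gl+\zeta)^{-(k\ell+n)}$; in particular the series $\sum_{\ell\ge1}\Abs{\ddb[n]{\gl}\phi(\gl)^\ell}$ converges for each $\gl>0$, since $\zeta/(\gl+\zeta)<1$. Dividing by $n!$ turns the factorial prefactor into the binomial coefficient $\binom{k\ell+n-1}{n}$, so writing $u=\zeta/(\gl+\zeta)\in(0,1)$ and noting that then $\gl/(\gl+\zeta)=1-u$ and $\gl=\zeta(1-u)/u$, the quantity to be bounded equals $\frac{\zeta}{u}(1-u)^{n+1}\sum_{\ell\ge1}\binom{k\ell+n-1}{n}u^{k\ell}$. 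Since $k\ge1$, the indices $k\ell$ with $\ell\ge1$ form a subset of the positive integers, and since every summand is nonnegative this sparse sum is dominated by the full one; the generating-function identity $\sum_{j\ge1}\binom{j+n-1}{n}x^j=x(1-x)^{-(n+1)}$, valid for $\abs x<1$, then gives $\sum_{\ell\ge1}\binom{k\ell+n-1}{n}u^{k\ell}\le u(1-u)^{-(n+1)}$. Substituting this back collapses the whole expression to the uniform bound $\zeta$, independent of both $n$ and $\gl$, which proves the second condition.

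There is no real obstacle here. The only points that need attention are the recognition that $(k\ell+n-1)!/\bigl((k\ell-1)!\,n!\bigr)$ is exactly the coefficient of $x^{k\ell}$ in the Taylor expansion of $(1-x)^{-(n+1)}$, and the observation that replacing the sum over multiples of $k$ by the sum over all positive integers only increases its value since all terms are nonnegative; once these are in place the rest is routine bookkeeping around the substitution $u=\zeta/(\gl+\zeta)$, whose limiting behaviour as $\gl\to0^+$ (i.e.\ $u\to1^-$) and as $\gl\to\infty$ (i.e.\ $u\to0^+$) is automatically consistent with the bound $\zeta$.
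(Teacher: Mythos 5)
Your proof is correct and takes essentially the same route as the paper: the first condition via the factorisation $(\gl+\zeta)^k-\zeta^k\ge k\zeta^{k-1}\gl$, and the second via the explicit formula for $\ddb[n]{\gl}\phi(\gl)^\ell$, domination of the sum over indices $k\ell$ by the sum over all positive integers, and closed-form evaluation of the latter to obtain the uniform bound $\zeta$. The only cosmetic difference is that you evaluate the full sum via the negative binomial generating function in $u=\zeta/(\gl+\zeta)$, whereas the paper differentiates the geometric series $n$ times in $z=(\gl+\zeta)/\zeta$ --- these are the same identity.
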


\begin{proof}
Note first that 
$$\sup_{0<\gl\leq 1}\frac{\gl}{1-\abs{\phi(\gl)}}=\sup_{0<\gl\leq 1}\frac{\gl(\lambda+\zeta)^k}{(\lambda+\zeta)^k-\zeta^k}\le \sup_{0<\gl\leq 1}\frac{(\lambda+\zeta)^k}{k\zeta^{k-1}}<\infty,$$
so the first part of \eqref{eq:unifbddabscondALTphi} certainly holds. For $n,\ell\in\NN$ and $\lambda>0$ we have that
  \eq{
  \Abs{\ddb[n]{\gl} \phi(\gl)^\ell} 
  ={\dr}^{k\ell} \frac{ (k\ell+n-1)!}{(k\ell-1)!\abs{\gl+\dr}^{k\ell+n}} .
  }
Given $\lambda>0$ let $z = (\gl+\dr)/{\dr}$. Then $z>1$ and 
\eq{
 \sum_{\ell=1}^\infty \Abs{\ddb[n]{\gl} \phi(\gl)^\ell} 
  \le \frac{1}{{\dr}^n} \sum_{\ell=1}^\infty  \frac{ (k\ell+n-1)!}{(k\ell-1)!z^{k\ell+n}}
  \leq \frac{1}{{\dr}^n} \sum_{\ell=1}^\infty  \frac{(\ell+n-1)!}{(\ell-1)!z^{\ell+n}}.
  }
Since 
$$ \sum_{\ell=1}^\infty  \frac{(\ell+n-1)!}{(\ell-1)!z^{\ell+n}} = \sum_{\ell=1}^\infty (-1)^n  \ddb[n]{z}\frac{1}{z^\ell}
  =   \ddb[n]{z}\left( \frac{1}{z-1} \right)= \frac{n!}{(z-1)^{n+1}}$$
and $z-1=\gl/\zeta$,  we obtain that
  \eq{\sup_{n\in\NN}\sup_{\lambda>0}  \frac{\lambda^{n+1}}{n!}\sum_{\ell=1}^\infty \Abs{\ddb[n]{\gl} \phi(\gl)^\ell} \le\zeta,
}
and hence $\phi$ also satisfies the second part of \eqref{eq:unifbddabscondALTphi}, as required.
\end{proof}

\section{Asymptotic behaviour}
\label{sec:asymp}

We now turn to the asymptotic behaviour of solutions to our system \eqref{CP}. For this we require, in addition to our earlier assumptions (A1) and (A2), three further assumptions. Recall that $\Omega_\phi=\{\lambda\in\CC\setminus\sigma(A_0):|\phi(\lambda)|=1\}$, where $\phi$ is the characteristic function of our system.
\begin{ass}
  \label{ass:further}
  We  introduce the further assumptions that 
  \eqn{\tag{A3}\label{A3}
\sigma(A_0)\subset\CC_-,
  } 
  \eqn{\label{A4}\tag{A4}
 0\in \Omega_\phi\subset\CC_-\cup\{0\}\quad\mbox{and}\quad\phi'(0)\ne0,
  }
  \eqn{\tag{A5}\label{A5}
\sup_{t\ge0}\|T(t)\|<\infty,
  }
  where  $T$ is the semigroup generated by $A$.
\end{ass}

\begin{rem}\label{rem:L}
Differentiating the identity in assumption (A2) gives 
$$-A_1R(\lambda,A_0)^2A_1=\phi'(\lambda)A_1,\quad \lambda\in\CC\setminus\sigma(A_0).$$
In particular, if (A3) holds then $-A_1A_0^{-2}A_1=\phi'(0)A_1$, and now assumption  (A4) implies that $A_1A_0^{-1}$ restricts to  an isomorphism from $\Ran(A_0^{-1}A_1)$ onto $\Ran(A_1)$. 
\end{rem}

In what follows we write $L$ for the inverse of this isomorphism appearing in Remark~\ref{rem:L}, so that $L$ maps $\Ran(A_1)$ isomorphically onto $\Ran(A_0^{-1}A_1)$. Moreover, having fixed $1\le p\le\infty$ and $m\in\NN$, we let
\eqn{
\label{stable}
Y=\left\{x_0\in X:\lim_{t\to\infty} x(t)\;\mbox{exists}\right\},
}
where $x(t)$, $t\ge0$, is the solution of \eqref{CP} with initial condition $x(0)=x_0$. Furthermore, we denote the right-shift operator on $X$ by $S$, so that $Sx=(x_{k-1})_{k\in\ZZ}$ for all $x=(x_k)_{k\in\ZZ}\in X$. Recall finally  that $n_\phi$ denotes the resolvent growth parameter of our system; see Remark~\ref{rem:nphi}. The aim in this section is to prove the following theorem. 

\begin{thm}\label{cor:gen}
Let $1\le p\le\infty$, $m\in\NN$ and assume that \textup{(A1)}--\textup{(A5)} hold.  Define the operator $M\in\B(X)$ by $M(x_k)=(A_1 A_0^{-1} x_k),$ and let the operator $L$ and the space $Y$  be defined as above.
\begin{enumerate}
\item[\textup{(a)}]\label{a}  We have $Y=X$  if and only if $1<p<\infty$. More specifically:
\begin{enumerate}[(i)]
\item[\textup{(i)}] If $1<p<\infty$ then $Y=X$ and $x(t)\to0$ as $t\to\infty$ for all $x_0\in X$. 
\item[\textup{(ii)}] If $p=1$ and $x_0\in X$ then $x_0\in Y$ if and only if  
\eqn{
\label{lim_model_finite}
\bigg\|\frac{1}{n}\sum_{k=1}^n\phi(0)^{k}S^{k}Mx_0\bigg\|\to0,\quad n\to\infty,
}
and if this holds then $x(t)\to0$ as $t\to\infty$. 
\item[\textup{(iii)}]If $p=\infty$ and $x_0\in X$ then $x_0\in Y$ if and only if there exists  $y_0\in\Ran(A_1)$ such that for $y=(\phi(0)^ky_0)$ we have 
\eqn{
\label{lim_model_infty}
\bigg\|\frac{1}{n}\sum_{k=1}^n\phi(0)^{k}S^{k}Mx_0-y\bigg\|\to0,\quad n\to\infty,
}
and if this holds then $x(t)\to z$ as $t\to\infty$, where $z=(\phi(0)^kLy_0)$. 
\end{enumerate}

 \item[\textup{(b)}]\label{b} Let $n_\phi$ be the resolvent growth parameter of the system.
 \begin{enumerate}[(i)]
 \item[\textup{(i)}] If $1\le p<\infty$ and the decay in \eqref{lim_model_finite} is like $O(n^{-1})$ as $n\to\infty$ then 
\eqn{
\label{model_log_finite}
\|x(t)\|=O\left(\bigg(\frac{(\log t)^{|1-2/p|}}{t}\bigg)^{1/n_\phi}\right),\quad t\to\infty.
}
\item[\textup{(ii)}] If $p=\infty$ and the decay in \eqref{lim_model_infty} is like $O(n^{-1})$ as $n\to\infty$ then
\eq{
\|x(t)-z\|=O\left(\bigg(\frac{\log t}{t}\bigg)^{1/n_\phi}\right),\quad t\to\infty.
}
\end{enumerate}
\item[\textup{(c)}]\label{c} For  $1\le p\le\infty$ and all  $x_0\in X$ we have
 \eq{
\|\dot{x}(t)\|=O\left(\bigg(\frac{(\log t)^{|1-2/p|}}{t}\bigg)^{1/n_\phi}\right),\quad t\to\infty.
} 
 \end{enumerate}
 \end{thm}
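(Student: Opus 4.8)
The plan is to derive Theorem~\ref{cor:gen} by feeding the spectral and boundedness information obtained in Sections~\ref{sec:spec} and~\ref{sec:bound} into the general machinery of asymptotic theory for $C_0$-semigroups. The crucial structural observation is that the operator $A$ decomposes, modulo a bounded invertible part, into a shift-like piece: on $\ran(A_1)$-type components the semigroup behaves essentially like the scalar multiplication semigroup associated with the shift $S$ weighted by $\phi(0)$, while on the complementary part governed by $\sigma(A_0)\subset\CC_-$ the semigroup decays exponentially. Concretely, I would first use Remark~\ref{rem:L} and assumption (A4) to split $X$ and identify the part of $A$ on which $0$ is a spectral point; the operator $M$ and its "inverse" $L$ are precisely the maps encoding this splitting. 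The key quantitative input is Proposition~\ref{prp:Rnormestimates} together with Lemma~\ref{lem:even}: these give $\|R(is,A)\|\asymp |s|^{-n_\phi}$ as $s\to0$, which is exactly the resolvent growth hypothesis required by the Batty--Chill--Tomilov / Batty--Duyckaerts-type theorems cited as \cite{BatChi14,ChiSei15,Mar11}.

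For part (a) I would argue as follows. Combine the uniform boundedness (A5) with the description $\sigma(A)\cap i\RR = \{0\}$ (from Theorem~\ref{thm:Aspec}, since $\Omega_\phi\subset\CC_-\cup\{0\}$ and $0\in\Omega_\phi$) and the fact that $0$ is a simple-type isolated boundary spectral point whose associated behaviour is captured by the ergodic projection. The Arendt--Batty--Lyubich--Vũ theorem and its discrete/ergodic refinements then say that $x(t)$ converges if and only if the relevant Cesàro averages of the "boundary part" of $x_0$ converge; unwinding the identification of that boundary part through $M$ gives conditions \eqref{lim_model_finite} and \eqref{lim_model_infty}. The trichotomy in $p$ comes straight from the last sentence of Theorem~\ref{thm:Aspec}: for $1<p<\infty$ the range of $-A$ (i.e.\ of $0-A$) is dense, forcing the ergodic projection at $0$ to vanish and hence $x(t)\to0$ for every $x_0$; for $p=1$ density fails but one still has the mean ergodic characterisation via the Cesàro means of $S$; for $p=\infty$ there are genuine eigenvectors at $0$ of the form $(\phi(0)^ky_0)$, which is why the limit $z=(\phi(0)^kLy_0)$ appears and why the averaged condition must single out a vector $y$ in that eigenspace.

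For parts (b) and (c) I would invoke the quantified Tauberian theorems for bounded semigroups with polynomially growing resolvent on the imaginary axis. Since $\|R(is,A)\|\asymp|s|^{-n_\phi}$ near $s=0$ and the resolvent is uniformly bounded elsewhere on $i\RR\setminus\{0\}$ (again from Theorem~\ref{thm:Aspec} and (A3)), the Batty--Duyckaerts / Chill--Seifert / Martinez-type estimates give, for $x_0$ in the appropriate domain, a decay rate $O\bigl((\log t / t)^{1/n_\phi}\bigr)$, with the logarithm sharpened to no logarithm when the underlying space is a Hilbert space, i.e.\ $p=2$; the factor $(\log t)^{|1-2/p|}$ interpolates between these two extremes and reflects the known behaviour of these Tauberian constants on $L^p$-type spaces (via the Borichev--Tomilov-type results for general Banach spaces versus Hilbert spaces). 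For (c), which concerns $\dot x(t)=Ax(t)$ for arbitrary $x_0$, the point is that $AR(is,A)$ is the relevant operator and that $\dot x(t) = T(t)Ax_0$ lies, after one application of $A$, automatically in the right smoothness class: $A$ maps $X$ into the "good" subspace on which the quantified decay applies (this is essentially because $A$ kills the constant-in-$k$ directions that obstruct convergence), so no ergodic hypothesis on $x_0$ is needed and the estimate holds for all $x_0$. The hypothesis in (b)(i)--(ii) that the Cesàro means decay like $O(n^{-1})$ is precisely the statement that $x_0$ (minus its limit) lies in the domain of the relevant fractional power / in $\ran$ of a suitable operator, which is the standing assumption under which the quantified theorems apply.

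The main obstacle I anticipate is the careful bookkeeping connecting the abstract ergodic/Tauberian input to the very explicit conditions \eqref{lim_model_finite}--\eqref{lim_model_infty} in terms of $S$, $M$, $L$ and $\phi(0)$. One must show that the ergodic projection of $A$ at $0$, and the fractional-domain condition, translate \emph{exactly} into Cesàro averages of the weighted shift $\phi(0)^kS^k$ applied to $Mx_0$; this requires identifying the restriction of $A$ to the generalised eigenspace/residual part at $0$ with a shift operator on a copy of $\ell^p(\ran A_1)$, and tracking how $M$ (projection onto that part) and $L$ (the isomorphism back) interact with the resolvent formula \eqref{eq:RAformula}. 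The second delicate point is getting the exponent $|1-2/p|$ on the logarithm right: this needs the sharp form of the quantified decay theorem valid on general Banach spaces (logarithmic loss) together with the improvement on Hilbert space, plus an interpolation or direct computation showing the $\ell^p$ constants behave like $(\log t)^{|1-2/p|}$ — I would expect to cite \cite{BatChi14,ChiSei15,Mar11} for the endpoint statements and supply the interpolation argument, or alternatively verify the intermediate rate directly from the resolvent bounds on $\ell^p$.
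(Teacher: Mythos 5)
Your overall architecture matches the paper's: spectral input from Theorem~\ref{thm:Aspec} and Proposition~\ref{prp:Rnormestimates} giving $\sigma(A)\cap i\RR=\{0\}$ and $\|R(is,A)\|\asymp|s|^{-n_\phi}$, uniform boundedness from (A5), the quantified Katznelson--Tzafriri-type theorems of \cite{BatChi14,ChiSei15,Mar11} for the rates, and Riesz--Thorin interpolation between the Banach-space estimate (with logarithm) and the Hilbert-space estimate at $p=2$ (without) to produce the exponent $|1-2/p|$. Part (c) and the $Y=X$ dichotomy via density of $\Ran(A)$ are also handled correctly. However, you have deferred, rather than resolved, the step that is the actual substance of the proof: producing the \emph{explicit} Ces\`aro conditions \eqref{lim_model_finite} and \eqref{lim_model_infty}. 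The paper's mechanism is purely algebraic: write $A=B-C$ with $B=SM_1$ and $C=-M_0$ (invertible by (A3)), set $Q=C^{-1}B=SN$ with $N=(-M_0)^{-1}M_1$, and observe that assumption (A2) forces $N^n=\phi(0)^{n-1}N$, whence $Q^n=\phi(0)^{n-1}S^nN$ is power-bounded and $Q^nC^{-1}=(-M_0)^{-1}\phi(0)^{n-1}S^nM$. Feeding this into the discrete mean ergodic theorem (the characterisation of $\overline{\Ran(I-Q)}$ via Ces\`aro means of $Q$, and the Lin--Sine characterisation of $\Ran(I-Q)$ for the $O(n^{-1})$ statement, which is where the predual hypothesis $Q=U'$ is needed --- including $X_*=c_0$ when $p=1$) is precisely what turns membership of $x_0-y$ in $\overline{\Ran(A)}$ or $\Ran(A)$ into the averages $\frac1n\sum_{k=1}^n\phi(0)^kS^kMx_0$. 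Without this identity your ``careful bookkeeping'' has no concrete handle, and the operators $M$ and $L$ never acquire their stated roles.

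A second, conceptual problem: you describe $0$ as a ``simple-type isolated boundary spectral point'' and propose to ``split $X$'' into a shift-like part and an exponentially decaying part. Neither is available here. By Theorem~\ref{thm:Aspec}, $\sigma(A)\setminus\sigma(A_0)=\Omega_\phi$ is a level curve of $|\phi|$ passing through $0$ and tangent to the imaginary axis, so $0$ is \emph{not} isolated in $\sigma(A)$, there is no Riesz projection, and no invariant decomposition of $X$ separating the behaviour at $0$ from the rest of the spectrum. This is exactly why the paper works with the (generally unbounded-in-norm, merely densely defined on $Y$) mean ergodic splitting $Y=\Ker(A)\oplus\overline{\Ran(A)}$ of Proposition~\ref{stable_sum}, proved via the Katznelson--Tzafriri theorem, rather than with any spectral decomposition. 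Your appeal to Arendt--Batty--Lyubich--V\~u gives (a)(i) (as the paper notes in Remark~\ref{rem:gen}) but cannot by itself yield the necessary-and-sufficient Ces\`aro characterisations for $p=1$ and $p=\infty$.
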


The proof of Theorem~\ref{cor:gen} is based on a number of general results. Given a $C_0$-semigroup $T$  on a complex Banach space $X$, let 
\eqn{
\label{stable_sg}
Y=\left\{x\in X:\lim_{t\to\infty}T(t)x\;\mbox{exists}\right\},
}
noting that this notation is consistent with \eqref{stable}.

\begin{prp}\label{stable_sum}
Let $T$  be a uniformly bounded $C_0$-semigroup on a complex Banach space $X$ and suppose that the generator $A$ of $T$  satisfies $\sigma(A)\cap i\RR=\{0\}$. Then the set $Y$ defined in \eqref{stable_sg} satisfies $Y=X_0\oplus X_1$, where $X_0=\Ker(A)$ and $X_1$ denotes the closure of $\Ran(A)$. Moreover, if $x\in Y$ and $T(t)x\to y$ as $t\to\infty$, then $y=Px$, where $P\in\B(Y)$ is the projection onto $X_0$ along $X_1$.
\end{prp}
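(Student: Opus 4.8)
The plan is to prove the two inclusions $Y\subseteq X_0\oplus X_1$ and $X_0\oplus X_1\subseteq Y$ separately, the first using the mean ergodic theorem for bounded $C_0$-semigroups and the second using a stability theorem that exploits the hypothesis $\sigma(A)\cap i\RR=\{0\}$. Throughout write $M=\sup_{t\ge0}\|T(t)\|$ and let $M_tx=t^{-1}\int_0^tT(s)x\,ds$ denote the Ces\`aro averages of the orbit of $x$. Recall the following classical ergodic facts. One has $X_0\cap X_1=\{0\}$: if $x\in\Ker(A)$ then $M_tx\equiv x$, whereas $M_t(Av)=t^{-1}(T(t)v-v)\to0$ for $v\in\Dom(A)$, so $M_tx\to0$ for $x\in\Ran(A)$ and hence, by density and the bound $M$, for all $x\in\overline{\Ran(A)}=X_1$. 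Moreover the mean ergodic theorem asserts that $M_tx$ converges as $t\to\infty$ precisely when $x\in X_0\oplus X_1$, in which case $\lim_{t\to\infty}M_tx$ is the component of $x$ in $X_0$; thus the map $P$ sending $x\in X_0\oplus X_1$ to that component is a bounded projection onto $X_0$ along $X_1$ with $\|P\|\le M$, as claimed.

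For the inclusion $Y\subseteq X_0\oplus X_1$, suppose $x\in Y$ with $T(t)x\to y$. Applying the bounded operator $T(s)$ and using the semigroup law and the index shift gives $T(s)y=\lim_{t\to\infty}T(s+t)x=y$ for every $s\ge0$, so $y\in\Dom(A)$ with $Ay=\lim_{h\downarrow0}h^{-1}(T(h)y-y)=0$, i.e.\ $y\in\Ker(A)=X_0$. Since $T(t)(x-y)=T(t)x-y\to0$ we also have $M_t(x-y)\to0$, so by the mean ergodic theorem $x-y\in X_0\oplus X_1$ with vanishing $X_0$-component; hence $x-y\in X_1$, and therefore $x=y+(x-y)\in X_0\oplus X_1$ with $y=Px$. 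This simultaneously gives the inclusion and the asserted form of the limit.

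For the reverse inclusion, write $x=x_0+x_1$ with $x_0\in X_0$ and $x_1\in X_1$. Since $x_0\in\Ker(A)$ we have $\frac{d}{dt}T(t)x_0=T(t)Ax_0=0$, so $T(t)x_0\equiv x_0$, and it remains to show $T(t)x_1\to0$. This is exactly where $\sigma(A)\cap i\RR=\{0\}$ is used: by the Katznelson--Tzafriri theorem for $C_0$-semigroups (in the form appearing in, e.g., \cite{BatChi14,ChiSei15}), the condition $\sigma(A)\cap i\RR\subseteq\{0\}$ implies $\|T(t)AR(\mu,A)\|\to0$ as $t\to\infty$ for $\mu\in\rho(A)$; since $AR(\mu,A)=\mu R(\mu,A)-I$ has range equal to $\Ran(A)$, this yields $T(t)x\to0$ for all $x\in\Ran(A)$, and then, by density and the bound $M$, for all $x\in\overline{\Ran(A)}=X_1$. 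Combining the two steps, $T(t)x=x_0+T(t)x_1\to x_0=Px$ for every $x\in X_0\oplus X_1$, which completes the proof.

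The step I expect to be the main obstacle is the reverse inclusion, and more precisely the correct invocation of the stability (Katznelson--Tzafriri) result with the boundary point $0$ allowed in $\sigma(A)\cap i\RR$ rather than this intersection being empty: this is legitimate precisely because passing from $X$ to $X_1=\overline{\Ran(A)}$ removes the kernel direction, and one must also make sure the reduction from $\Ran(A)$ to its closure is valid. Among the ergodic ingredients the only delicate point is the ``only if'' half of the mean ergodic theorem, whose proof rests on a Hahn--Banach argument (a functional annihilating $\Ran(A)$ is fixed by the adjoint semigroup, which prevents the Ces\`aro averages of elements outside $X_0\oplus X_1$ from converging).
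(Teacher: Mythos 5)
Your proof is correct and follows essentially the same route as the paper: the inclusion $\overline{\Ran(A)}\subseteq Y$ is obtained from the Katznelson--Tzafriri theorem plus a density and uniform-boundedness argument, and the reverse inclusion from the facts that the limit $y$ is a fixed point (hence in $\Ker(A)$) and that $x-y$ lies in $\overline{\Ran(A)}$ by a Hahn--Banach/duality argument. The only cosmetic differences are that you package the latter step as an appeal to the mean ergodic theorem (whose ``only if'' half is proved by exactly the duality argument the paper writes out, and which the paper itself invokes in Remark~\ref{rem:ergodic}) and that you use the single-resolvent form $\|T(t)AR(\mu,A)\|\to0$ of Katznelson--Tzafriri where the paper works with $Q=AR(1,A)^2$ and the density of its range in $X_1$; both variants are sound.
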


\begin{proof}
If $x\in X_0$, then $T(t)x=x$ for all $t\ge0$ and hence $x\in Y$. Thus $X_0\subset Y$. Now define the function $f\in L^1(\RR_+)$ by $f(t)=(t-1)e^{-t}$, then the Laplace transform $F$ of $f$ is given by
$$F(\lambda)=-\frac{\lambda}{(1+\lambda)^2},\quad \re{\lambda}\ge0,$$
and we can define the operator $Q\in\B(X)$ by
$$Qx=\int_0^\infty f(t)T(t)x\,\dd t,\quad x\in X,$$
noting that $Q=AR(1,A)^2$.  Since $F$ vanishes on the set $\sigma(A)\cap i\RR=\{0\}$ and since singleton sets are of spectral synthesis, it follows from the Katznelson-Tzafriri theorem \cite[Theorem~3.2]{Vu92} that $\|T(t)Q\|\to0$ as $t\to\infty$, and hence $\Ran(Q)\subset Y$. A simple argument shows that $\Ran(Q)=\Ran(A)\cap D(A)$, where $D(A)$ denotes the domain of $A$. In particular, $\Ran(Q)$ is dense in $X_1$, so by uniform boundedness of $T$  we obtain that $X_1\subset Y$. Thus $X_0+X_1\subset Y$.  Next we show that the sum is direct. Suppose that $x\in X_0\cap X_1$. Since $x\in X_0$, $\|x\|=\|T(t)x\|$ for all $t\ge0$. On the other hand, since $x\in X_1$, $\|T(t)x\|\to0$ as $t\to\infty$. It follows that $x=0$, and hence $X_0\cap X_1=\{0\}$, as required.

Now suppose that $x\in Y$. Then there exists $y\in X$ such that $y=\lim_{s\to\infty}T(s)x$. For  $t\geq 0$ we have $T(t)y=\lim_{s\to\infty}T(t)T(s)x=y$, which implies that $y\in X_0$.  Let $z=x-y$. Then 
\eqn{
\label{zero}
\|T(t)z\|=\|T(t)x-y\|\to0,\quad t\to\infty.
}
Suppose that $z\in X\setminus X_1$. It follows from a standard application of the Hahn-Banach theorem that there exists $\phi\in X^*$ such that $\dualpair{z}{\phi}=1$ and $\phi|_{X_1}=0$. In particular, $\phi|_{\ran(A)}=0$ and hence $\phi\in \Ker(A')$. It follows that 
$T(t)'\phi=\phi$ for all $t\ge0$, and therefore 
$$\dualpair{T(t)z}{\phi}=\dualpair{z}{T(t)'\phi}=\dualpair{z}{\phi}=1,\quad t\ge0.$$
This contradicts \eqref{zero}, so $z\in X_1$. Thus $x=y+z\in X_0+ X_1$ and consequently $Y= X_0\oplus X_1$. Furthermore, 
$$\lim_{t\to\infty}T(t)x=y=Px,$$
 where $P:Y\to Y$ is the projection onto $X_0$ along $X_1$. Since both $X_0$ and $X_1$ are closed, 
 $P$ is bounded and the proof is complete.
\end{proof}

\begin{rem}\label{rem:ergodic}
Note that if $x\in Y$ and $T(t)x\to y$ as $t\to\infty$, then
\eqn{
\label{eq:ergodic}
\lim_{t\to\infty}\frac{1}{t}\int_0^tT(s)x\,\dd s=y.
}
It is well known that the set of $x\in X$ for which \eqref{eq:ergodic} holds is given by $X_0\oplus X_1$, where $X_0$ and $X_1$ are as in Proposition~\ref{stable_sum}. The result is therefore Tauberian in flavour, showing as it does that \eqref{eq:ergodic} implies  $\lim_{t\to\infty}T(t)x=y$. Note that this ergodic approach also leads to the further equivalent characterisation of the set $Y$ as
$$Y=\left\{x\in X:\lim_{\lambda\to0+}R(\lambda,A)x\;\mbox{exists}\right\}.$$
For details of the above results see for instance \cite[Section~4.3]{ABHN11}, and for a more general result related to Proposition~\ref{stable_sum} see  \cite[Theorem~5.5.4]{ABHN11}.
\end{rem}

 As observed in Remark~\ref{rem:ergodic}, the characterisation of the set $Y$ obtained in Proposition~\ref{ergodic} can be interpreted as the set of mean ergodic vectors of the semigroup $T$ . We now collect some important facts about the set of mean ergodic vectors of certain bounded linear operators, which will then be used to obtain descriptions of the set $Y$ in the case where the semigroup $T$  has a suitable bounded generator.

\begin{prp}\label{ergodic}
Let $X$ be the dual space of a complex Banach space $X_*$ and consider the operator $A=B-C$, where $B,C\in\B(X)$. Suppose that $C$ is invertible and that the operator $Q=C^{-1}B$ is power-bounded and satisfies $Q=U'$ for some $U\in\B(X_*)$. Let $Z= X_0\oplus X_1$, where $X_0=\Ker(A)$ and $X_1$ denotes the closure of $\Ran(A)$, and let $Z_0=X_0\oplus\Ran(A)$.  Then, given $x\in X$, we have $x\in Z$ if and only if there exists $y\in X_0$ such that
\eqn{
\label{proj_lim}
\bigg\|\frac{1}{n}\sum_{k=1}^nQ^kC^{-1}(x-y)\bigg\|\to0,\quad n\to\infty.
} 
Furthermore, $Z_0$ consists of all those $x\in Z$ for which the convergence in \eqref{proj_lim} is  like $O(n^{-1})$ as $n\to\infty$.
\end{prp}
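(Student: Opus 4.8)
The plan is to reduce the statement about the operator $A=B-C$ to a classical mean-ergodic theorem for the power-bounded operator $Q=C^{-1}B$. The key algebraic observation is that $A = B-C = C(Q-I)$, so that $C^{-1}A = Q-I$; since $C$ is invertible and bounded, $\Ker(A)=\Ker(Q-I)$ and $\closure{\Ran(A)}=\closure{C\Ran(Q-I)}$. I would first check that $C$ and $C^{-1}$ map $\closure{\Ran(Q-I)}$ onto $\closure{\Ran(A)}$ and hence that $C$ restricts to an isomorphism of $\Ker(Q-I)\oplus\closure{\Ran(Q-I)}$ onto $Z=X_0\oplus X_1$; a small subtlety here is that one should verify $\Ran(A)=C\Ran(Q-I)$ exactly, not just up to closure, which is immediate from invertibility of $C$. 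The point of keeping track of $C$ and not just $Q$ is that the Cesàro averages in \eqref{proj_lim} are averages of $Q^k C^{-1}(x-y)$, i.e.\ of $Q^{k-1}$ applied to $Q C^{-1}(x-y) = QC^{-1}(x-y)$; I would rewrite $\frac1n\sum_{k=1}^n Q^k C^{-1}(x-y)$ as $Q\cdot\frac1n\sum_{k=0}^{n-1}Q^k\cdot C^{-1}(x-y)$, so that, since $Q$ is power-bounded, the convergence to $0$ of these averages is equivalent to the convergence to $0$ of the standard Cesàro averages $\frac1n\sum_{k=0}^{n-1}Q^k w$ with $w=C^{-1}(x-y)$.

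Next I would invoke the mean ergodic theorem for power-bounded operators: for a power-bounded $Q\in\B(X)$, a vector $w$ lies in $\Ker(Q-I)\oplus\closure{\Ran(Q-I)}$ if and only if $\frac1n\sum_{k=0}^{n-1}Q^kw$ converges, and the limit is $0$ precisely when $w\in\closure{\Ran(Q-I)}$ (because the projection of $w$ onto $\Ker(Q-I)$ is the limit). Applying this with $w=C^{-1}(x-y)$ where $y\in X_0=\Ker(A)=\Ker(Q-I)$, and noting $C^{-1}y\in\Ker(Q-I)$ as well, one sees that the existence of $y\in X_0$ making \eqref{proj_lim} hold is equivalent to $C^{-1}x$ lying in $\Ker(Q-I)\oplus\closure{\Ran(Q-I)}$, which by the first paragraph is equivalent to $x\in Z$. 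The hypothesis $Q=U'$ is what guarantees the relevant splitting: for a dual operator that is power-bounded one always has the mean ergodic decomposition on the whole space in the sense that the Cesàro averages converge weak$^*$, and combined with power-boundedness this yields the norm statement we need; alternatively one can cite that $\Ker(Q-I)\oplus\closure{\Ran(Q-I)}$ is exactly the set of vectors whose Cesàro averages converge in norm, and that $Q=U'$ forces this set to be all of $X$ — but the proposition as stated only asserts the equivalence for a given $x$, so I do not actually need the whole-space statement, only the general mean ergodic equivalence, which holds for any power-bounded operator.

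For the rate statement, I would use the known quantified version: for a power-bounded $Q$ and $w\in X$, the Cesàro averages $\frac1n\sum_{k=0}^{n-1}Q^kw$ converge to $0$ at rate $O(n^{-1})$ if and only if $w\in\Ran(Q-I)$ (not merely its closure). This is the content of the Browder-type characterisation: writing $w=(Q-I)v$ gives a telescoping sum $\frac1n\sum_{k=0}^{n-1}Q^k(Q-I)v=\frac1n(Q^n-I)v$, which is $O(n^{-1})$ by power-boundedness; conversely an $O(n^{-1})$ rate forces $w\in\Ran(Q-I)$ by a standard argument. Transporting this back through $C$, the condition $C^{-1}x\in\Ker(Q-I)\oplus\Ran(Q-I)$ is equivalent to $x\in X_0\oplus\Ran(A)=Z_0$, with the $O(n^{-1})$ rate in \eqref{proj_lim} corresponding exactly to membership in $Z_0$. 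The main obstacle I anticipate is bookkeeping: making sure the decomposition $w=C^{-1}(x-y)$ with $y\in X_0$ is handled so that the $\Ker(Q-I)$ component of $w$ is $C^{-1}$ of the $X_0$ component of $x$ and can be absorbed into the choice of $y$, and being careful that the rate statement for $Z_0$ does not secretly require $\Ran(A)$ to be closed (it does not — one only needs $C^{-1}x-C^{-1}y\in\Ran(Q-I)$ for the \emph{given} $x$). I would also double-check that the precise references to the mean ergodic theorem and its quantified refinement are to results valid for general power-bounded operators on Banach spaces, so that the $Q=U'$ hypothesis is used only where genuinely needed.
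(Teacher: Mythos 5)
Your overall strategy---factor $A=-C(I-Q)$ and reduce everything to the mean ergodic theorem for the power-bounded operator $Q$---is exactly the paper's, but two of your intermediate claims do not hold. First, you assert that $C^{-1}y\in\Ker(Q-I)$ whenever $y\in X_0$, and on that basis that $C$ restricts to an isomorphism of $\Ker(Q-I)\oplus\closure{\Ran(Q-I)}$ onto $Z$, so that $x\in Z$ becomes equivalent to $C^{-1}x\in\Ker(Q-I)\oplus\closure{\Ran(Q-I)}$. This is false unless $B$ and $C$ commute: $y\in\Fix(Q)$ means $By=Cy$, whereas $C^{-1}y\in\Fix(Q)$ means $BC^{-1}y=y$, and these are different conditions (take $Q=\bigl(\begin{smallmatrix}1&0\\0&0\end{smallmatrix}\bigr)$, $C=\bigl(\begin{smallmatrix}1&0\\1&1\end{smallmatrix}\bigr)$, $B=CQ$; then $e_1\in\Fix(Q)=\Ker(A)$ but $C^{-1}e_1=(1,-1)^T\notin\Fix(Q)$). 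Your reformulation in effect quantifies the ``fixed part'' over $C\Fix(Q)$ rather than over $X_0=\Fix(Q)$. The detour is unnecessary: since $X_0=\Fix(Q)$ (no $C$ applied) while $X_1=C\,\closure{\Ran(I-Q)}$, one has directly that $x\in Z$ if and only if there exists $y\in X_0$ with $C^{-1}(x-y)\in\closure{\Ran(I-Q)}$, and the mean ergodic characterisation of $\closure{\Ran(I-Q)}$ applied to $w=C^{-1}(x-y)$ alone gives \eqref{proj_lim}; no statement about $C^{-1}y$ is needed. That is precisely how the paper argues.

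Second, and more substantively, the converse half of your rate statement---that an $O(n^{-1})$ decay of the Ces\`aro averages of $w$ forces $w\in\Ran(I-Q)$---is \emph{not} a theorem for general power-bounded operators on Banach spaces. The telescoping direction is fine, but the converse is Browder's theorem in reflexive spaces and, in the present setting, the Lin--Sine result \cite[Theorem~5]{LinSine83} for power-bounded \emph{dual} operators; it can fail without such hypotheses. This is exactly where the assumption $Q=U'$ is used, and the paper says so explicitly (``it is here that the duality assumptions are needed''). Your closing hope that the quantified refinement is ``valid for general power-bounded operators on Banach spaces, so that the $Q=U'$ hypothesis is used only where genuinely needed'' has the logic inverted: the unquantified characterisation of $Z$ needs no duality (and, as the paper's Remark following the proposition notes, holds under even weaker boundedness assumptions), but the quantified characterisation of $Z_0$ genuinely does.
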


\begin{proof}
Note first that $X_0=\Fix(Q)$ and that $\Ran(A)=\{Cx:x\in\Ran(I-Q)\}$. Hence $X_1=\{Cx:x\in X_2\}$, where $X_2$ denotes the closure of $\Ran(I-Q)$. By \cite[Theorem~1.3 of Section~2.1]{Kre85} and power-boundedness of $Q$,
$$X_2=\left\{x\in X:\lim_{n\to\infty}\frac{1}{n}\sum_{k=1}^nQ^kx=0\right\}.$$
Hence, given $x\in X$, we have $x\in Z$ if and only if there exists $y\in X_0$ such that $x-y\in X_1$, which is equivalent to $C^{-1}(x-y)\in X_2$. This shows that \eqref{proj_lim} holds. The characterisation of $Z_0$ follows similarly using \cite[Theorem~5]{LinSine83}, and it is here that the duality assumptions are needed.
\end{proof}

\begin{rem}\label{sublinear}
 The characterisation of the space $Z$ in fact holds on arbitrary complex Banach spaces and when the condition of power-boundedness  is replaced by the  weaker assumptions that $Q$ is \emph{Ces\`aro bounded}, which is to say
$$\sup_{n\ge1}\bigg\|\frac{1}{n}\sum_{k=1}^nQ^k\bigg\|<\infty,$$ 
and that $\|Q^nx\|=o(n)$ as $n\to\infty$ for each $x\in X$. 
A characterisation of $Z_0$ in this more general setting can be deduced from the results in  \cite{LinSine83}.
\end{rem}

We now seek to combine Propositions~\ref{stable_sum} and \ref{ergodic} in to obtain a characterisation of the set $Y$ defined in \eqref{stable_sg} when the semigroup $T$  is generated by a suitable bounded operator $A$. In particular, we hope to deduce from Proposition~\ref{ergodic} a statement about the \emph{rate} at which certain semigroup orbits converge to  a limit. This requires two abstract results.

 \begin{thm}\label{mlog_gen}
Let $X$ be a complex Banach space and suppose $T$ is a uniformly bounded $C_0$-semigroup on $X$ whose generator $A\in\B(X)$ satisfies $\sigma(A)\cap i\RR=\{0\}$. Suppose that
$$\|R(is,A)\|\le m(|s|),\quad 0<|s|\le1,$$
for some continuous non-increasing function $m:(0,1]\to[1,\infty)$. Then for any $c\in (0,1)$
\begin{equation}\label{eq:mlog}
\|AT(t)\|=O\big(\mlog^{-1}(ct)\big),\quad t\to\infty,
\end{equation}
where $\mlog^{-1}$ is the inverse function of the map $\mlog:(0,1]\to(0,\infty)$ given by
\eqn{
\label{eq:mlogdef}
\mlog(r)=m(r)\log\left(1+\frac{m(r)}{r}\right),\quad 0<r\le1.
}
\end{thm}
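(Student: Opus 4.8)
The plan is to deduce the quantified decay estimate from a known abstract result relating the rate of decay of $\|AT(t)\|$ to the growth of the resolvent along the imaginary axis, specifically the result of Batty--Chill--Tomilov and its refinements in \cite{BatChi14, ChiSei15, Mar11}. Since $T$ is uniformly bounded and $\sigma(A)\cap i\RR=\{0\}$ is a singleton (hence of spectral synthesis), the Katznelson--Tzafriri-type machinery applies; but because we want a \emph{rate}, I would instead invoke the quantified version. The essential point is that these theorems give $\|AT(t)\|=O(m_{\log}^{-1}(ct))$ once one controls the norm of $R(\lambda,A)$ in a region abutting the critical point $0$ on the imaginary axis, and the definition \eqref{eq:mlogdef} of $m_{\log}$ is precisely the one appearing in that literature (the extra logarithmic factor being unavoidable in the general Banach space setting).

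First I would reduce the problem to an estimate on $R(\lambda,A)$ in a full neighbourhood of $0$ intersected with $\overline{\CC_+}$, not just on the imaginary axis. By uniform boundedness of $T$, the resolvent exists and is bounded on $\CC_+$, and in fact the standard Neumann-series perturbation argument gives, for $\re\lambda>0$ and $s=\im\lambda$ with $0<|s|\le 1$,
\begin{equation*}
\|R(\lambda,A)\|\le\frac{\|R(is,A)\|}{1-\re\lambda\,\|R(is,A)\|}\le 2m(|s|)
\end{equation*}
provided $\re\lambda\le(2m(|s|))^{-1}$; together with the trivial bound $\|R(\lambda,A)\|\le(\re\lambda)^{-1}$ this yields control of $\|R(\lambda,A)\|$ throughout a region of the form $\{\lambda\in\overline{\CC_+}: |\im\lambda|\le 1\}$ by (a constant multiple of) $m(|\im\lambda|)$, and for $|\im\lambda|\ge 1$ the resolvent is uniformly bounded since $0$ is the only spectral point on $i\RR$ and $T$ is bounded. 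This puts us exactly in the hypotheses of the quantified decay theorem.

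Then I would apply the abstract theorem: with $M(s)=m(s)$ for $0<s\le1$ and $M$ bounded for $s\ge1$, the singularity of the resolvent is located only at $0$, and the decay rate of $\|AT(t)\|$ is governed by the inverse of $M_{\log}(r)=M(r)\log(1+M(r)/r)$. Shrinking the constant from $1$ to any $c\in(0,1)$ is the usual harmless loss coming from the contour-shifting argument (one integrates $e^{\lambda t}\lambda R(\lambda,A)$ along a curve $\re\lambda=\eta(|\im\lambda|)$ with $\eta$ chosen in terms of $m$, and optimising $\eta$ near $\im\lambda=0$ produces the factor $c$). I expect the only real subtlety to be bookkeeping: making sure that $m_{\log}$ as defined in \eqref{eq:mlogdef} is genuinely invertible (it is continuous, strictly decreasing as the product of the non-increasing $m\ge1$ with the non-increasing $\log(1+m(r)/r)$, and tends to $\infty$ as $r\to0+$ since $m(r)/r\to\infty$), and that the $O$-constant absorbs the passage between $m(|\im\lambda|)$ on the axis and $\|R(\lambda,A)\|$ off the axis. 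The genuinely hard analytic content — the contour integral estimate producing $m_{\log}^{-1}$ — is not reproved here but quoted from \cite{BatChi14, ChiSei15}.
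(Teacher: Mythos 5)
Your proposal follows essentially the same route as the paper: both deduce the estimate from the quantified singularity-at-zero results of \cite{ChiSei15} (see also \cite{BatChi14,Mar11}), with $m_{\log}$ defined exactly as in that literature. The one point you gloss over is the hypothesis of \cite[Corollary~2.12]{ChiSei15} controlling the resolvent at infinity: in a general Banach space, uniform boundedness of $\|R(is,A)\|$ for $|s|\ge 1$ together with boundedness of $T$ is \emph{not} by itself enough to run the contour argument for an arbitrary generator; one needs the non-analytic growth bound $\zeta(T)$ to be negative (only on Hilbert spaces can this be replaced by $\sup_{|s|\ge1}\|R(is,A)\|<\infty$, by \cite[Theorem~5.4]{BaBlSr03}). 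Here the gap is harmless because $A\in\B(X)$, so $T$ is norm-continuous (indeed entire) and $\zeta(T)=-\infty$ by \cite[Theorem~5.6]{BaBlSr03} --- this is precisely how the paper justifies the application, and you should say it explicitly. Your Neumann-series extension of the bound into $\CC_+$ and the monotonicity check for $m_{\log}$ are correct but already built into the cited result; note finally that \cite[Corollary~2.12]{ChiSei15} bounds $\|T(t)AR(1,A)\|$, and one passes to $\|AT(t)\|$ by applying the bounded operator $I-A$.
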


\begin{proof}
The result is a consequence of  \cite[Corollary~2.12]{ChiSei15}. Indeed, since $T$ is norm-continuous and hence differentiable, it follows from \cite[Theorem~5.6]{BaBlSr03} that the non-analytic growth bound $\zeta(T)$ of $T$  satisfies $\zeta(T)=-\infty$, and in particular $\zeta(T)<0$. Thus \cite[Corollary~2.12]{ChiSei15} shows that 
$$\|T(t)AR(1,A)\|=O\big(\mlog^{-1}(ct)\big),\quad t\to\infty,$$
and \eqref{eq:mlog} follows by applying the bounded linear operator $I-A$.
\end{proof}

\begin{rem}\begin{enumerate}[(a)]
\item The unquantified version of the above result, namely that $\|AT(t)\|\to0$ as $t\to\infty$ when $T$ is bounded and $\sigma(A)\cap i\RR=\{0\}$ is shown in the more general setting of eventually differentiable semigroups in \cite[Theorem~3.10]{ArPr92}. The result can also be deduced from the Katznelson-Tzafriri theorem. Indeed, it was shown that in the proof of Proposition~\ref{stable_sum} that $\|T(t)AR(1,A)^2\|\to0$ as $t\to\infty$, from which the claim  follows easily; see also \cite[Remark~6.3]{BatChi14}
\item As is shown in \cite[Corollary~2.12]{ChiSei15}, the result in fact holds more generally for bounded semigroups whose generator is not necessarily bounded. When $X$ is a Hilbert space, it follows from \cite[Theorem~5.4]{BaBlSr03} that the condition $\zeta(T)<0$ can be replaced by the condition $\sup_{|s|\ge1}\|R(is,A)\|<\infty$. A more direct way of showing that $\zeta(T)=-\infty$ when the semigroup $T$ has bounded generator is to observe that in this case 
$$T(t)=\sum_{n^=0}^\infty  \frac{t^n}{n!}A^n,\quad t\ge0,$$
with the sum converging in operator norm. In particular, $T$ itself extends to an analytic and exponentially bounded operator-valued family on a sector containing $(0,\infty)$, and the claim follows from the definition of $\zeta(T)$. See \cite{BaBlSr03} for details on the non-analytic growth bound $\zeta(T)$.

\item Theorem~\ref{mlog_gen} can also be deduced from \cite[Proposition~3.1]{Mar11} with the function $M:[1,\infty)\to(0,\infty)$ taken to be constant, since in this case both the $\Mlog^{-1}$-term and the $t^{-1}$-term are dominated by the $\mlog^{-1}$-term.
\end{enumerate}
\end{rem}

The next result is a special case of Theorem~\ref{mlog_gen} dealing with the case of polynomial resolvent growth, and it contains a sharper estimate in the Hilbert space setting.

\begin{thm}\label{thm:poly}
Let $X$ be a complex Banach space and suppose $T$ is a uniformly bounded $C_0$-semigroup on $X$ whose generator $A\in\B(X)$ satisfies $\sigma(A)\cap i\RR=\{0\}$ and $\|R(is,A)\|=O(|s|^{-\alpha})$ as $|s|\to0$ for some $\alpha\ge1$. Then
\begin{equation*}\label{log_decay}
\|AT(t)\|=O\left(\bigg(\frac{\log t}{t}\bigg)^{1/\alpha}\right),\quad t\to\infty.
\end{equation*}
Moreover, if $X$ is a Hilbert space then the logarithm can be omitted.
\end{thm}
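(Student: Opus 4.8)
The plan is to handle the two assertions separately: the general estimate will come from Theorem~\ref{mlog_gen} after an explicit asymptotic inversion of $\mlog$, while the Hilbert-space refinement requires a decay result specific to that setting.

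\textbf{Banach space case.} First I would apply Theorem~\ref{mlog_gen} with a concrete choice of $m$. Combining the hypothesis $\|R(is,A)\|=O(|s|^{-\alpha})$ as $|s|\to0$ with the fact that $s\mapsto\|R(is,A)\|$ is bounded on $\{\delta\le|s|\le1\}$ for every $\delta>0$ (since $\sigma(A)\cap i\RR=\{0\}$), one obtains a constant $C\ge1$ with $\|R(is,A)\|\le C|s|^{-\alpha}$ for all $0<|s|\le1$. Hence $m(r)=\max\{1,Cr^{-\alpha}\}$ is continuous, non-increasing, takes values in $[1,\infty)$ and satisfies $\|R(is,A)\|\le m(|s|)$ for $0<|s|\le1$, so Theorem~\ref{mlog_gen} applies. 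For all sufficiently small $r>0$ one has $m(r)=Cr^{-\alpha}$, whence
\[
\mlog(r)=Cr^{-\alpha}\log\!\Big(1+\frac{C}{r^{\alpha+1}}\Big)\asymp r^{-\alpha}\log\frac1r,\qquad r\to0^+.
\]
The one nontrivial step is to invert this relation asymptotically, i.e.\ to show $\mlog^{-1}(ct)=O\big((\log t/t)^{1/\alpha}\big)$ as $t\to\infty$ for each fixed $c\in(0,1)$. To this end I would put $r_t=K(\log t/t)^{1/\alpha}$ with $K>0$ to be chosen: then $r_t^{-\alpha}=K^{-\alpha}t/\log t$ and $\log(1/r_t)=\tfrac1\alpha(\log t-\log\log t)-\log K\asymp\log t$, so a short computation gives $\mlog(r_t)=C(1+\tfrac1\alpha)K^{-\alpha}t\,(1+o(1))$; choosing $K$ large enough forces $\mlog(r_t)\le ct$ for all large $t$, and since $\mlog$ is strictly decreasing this gives $\mlog^{-1}(ct)\le r_t$. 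Combined with \eqref{eq:mlog} this yields $\|AT(t)\|=O\big((\log t/t)^{1/\alpha}\big)$.

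\textbf{Hilbert space case.} Here Theorem~\ref{mlog_gen} only recovers the estimate with the logarithm, so instead I would invoke the Hilbert-space refinement of the quantified Katznelson-Tzafriri theorem (essentially a Hilbert-space analogue of the Borichev-Tomilov theorem). Since $A$ is bounded, $\sigma(A)$ is compact and $\|R(is,A)\|\le(|s|-\|A\|)^{-1}\to0$ as $|s|\to\infty$, so in particular $\sup_{|s|\ge1}\|R(is,A)\|<\infty$; together with $\sigma(A)\cap i\RR=\{0\}$ and $\|R(is,A)\|=O(|s|^{-\alpha})$ as $|s|\to0$, the Hilbert-space form of \cite[Corollary~2.12]{ChiSei15} gives $\|T(t)AR(1,A)\|=O(t^{-1/\alpha})$ as $t\to\infty$, with no logarithmic factor. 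Finally, exactly as at the end of the proof of Theorem~\ref{mlog_gen}, the identity $A=(I-A)AR(1,A)$ together with the commutativity of $T(t)$ and $I-A$ gives $\|AT(t)\|=\|(I-A)T(t)AR(1,A)\|\le\|I-A\|\,\|T(t)AR(1,A)\|=O(t^{-1/\alpha})$.

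The routine part is the asymptotic inversion of $\mlog$; the only thing to watch there is the bookkeeping of the constants $C$, $\alpha$, $K$ and $c$, which is harmless because \eqref{eq:mlog} is available for \emph{every} $c\in(0,1)$. The genuinely substantial ingredient is the Hilbert-space estimate used in the second part: removing the logarithm is not a matter of sharpening the elementary estimates above but relies on Hilbert-space geometry, so the main point requiring care is to verify that the hypotheses of the cited result — in particular the uniform resolvent bound on $\{|s|\ge1\}$, which here comes for free from boundedness of $A$ — are exactly met.
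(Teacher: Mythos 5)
Your argument is correct and follows essentially the same route as the paper's own proof: the first statement comes from Theorem~\ref{mlog_gen} with $m(r)=Cr^{-\alpha}$ together with the asymptotic inversion of $\mlog$ (which you carry out explicitly and correctly), and the Hilbert-space refinement is delegated to an external quantified Katznelson--Tzafriri theorem after checking $\sup_{|s|\ge1}\|R(is,A)\|<\infty$ via boundedness of $A$. The only discrepancy is bibliographic: the paper cites \cite[Theorem~7.6]{BatChi14} for the removal of the logarithm rather than a Hilbert-space form of \cite[Corollary~2.12]{ChiSei15}, but the substance of that step is exactly as you describe.
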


\begin{proof}
The first statement is a consequence of Theorem~\ref{mlog_gen} with the choice $m(r)=Cr^{-\alpha}$, $0<r\le 1$, for a suitable constant $C\ge1$, since in this case 
$$\mlog^{-1}(ct)=O\left(\bigg(\frac{\log t}{t}\bigg)^{1/\alpha}\right),\quad t\to\infty,$$
for all $c\in(0,1)$. The second statement is a direct consequence of \cite[Theorem~7.6]{BatChi14} and boundedness of the generator $A$.
\end{proof}

\begin{rem}\label{rem:optimality}
It follows from \cite[Corollary~6.11]{BatChi14} that if in Theorem~\ref{thm:poly} we in fact have $\|R(is,A)\|\asymp|s|^{-\alpha}$ as $|s|\to0$, then there exists a constant $c>0$ such that 
$$\|AT(t)\|\ge\frac{c}{t^{1/\alpha}},\quad t\ge1,$$
provided that $\|sR(is,A)\|\to\infty$ as $|s|\to0$. Since the resolvent growth parameter $n_\phi$ is always strictly greater than 1 by Lemma~\ref{lem:even}, it follows from Proposition~\ref{prp:Rnormestimates} that the latter condition is always satisfied in Theorem~\ref{cor:gen}. 
\end{rem}

We now combine the previous results in this section in order to obtain a general result about the asymptotics of semigroups whose generators are suitable bounded operators. Recall from \eqref{stable_sg} that
$$Y=\left\{x\in X:\lim_{t\to\infty}T(t)x\;\mbox{exists}\right\}.$$

\begin{thm}\label{erg_quant}
Let $T$ be a uniformly bounded $C_0$-semigroup on a space $X$ which is the dual of a complex Banach space $X_*$. Suppose that the generator $A$ of $T$ satisfies $A=B-C$, where $B,C\in\B(X)$, $C$ is invertible, the operator $Q=C^{-1}B$ is power-bounded and satisfies $Q=U'$ for some $U\in\B(X_*)$. Suppose furthermore that  $\sigma(A)\cap i\RR=\{0\}$ and that
$$\|R(is,A)\|\le m(|s|),\quad 0<|s|\le1,$$
for some continuous non-increasing function $m:(0,1]\to[1,\infty)$.

  Then, given $x\in X$, we have $x\in Y$ if and only if there exists $y\in\Fix(Q)$ such that
\eqn{
\label{lim_ergodic}
\bigg\|\frac{1}{n}\sum_{k=1}^nQ^kC^{-1}(x-y)\bigg\|\to0,\quad n\to\infty,
}
 and if \eqref{lim_ergodic} holds then  $T(t)x\to y$ as $t\to\infty$. Moreover, if the convergence in \eqref{lim_ergodic}  is like $O(n^{-1})$ as $n\to\infty$, then for each $c\in(0,1)$
\eqn{
\label{T_mlog}
\|T(t)x-y\|=O\big(\mlog^{-1}(cn)\big),\quad t\to\infty,
}
 where $\mlog$ is as defined in \eqref{eq:mlogdef}. In particular, if  $\|R(is,A)\|=O(|s|^{-\alpha})$ for some $\alpha\ge1$ as $|s|\to0$, then 
\eqn{
\label{T_log}
\|T(t)x-y\|=O\left(\bigg(\frac{\log t}{t}\bigg)^{1/\alpha}\right),\quad t\to\infty,
}
 and the logarithm can be omitted if $X$ is a Hilbert space. 
  \end{thm}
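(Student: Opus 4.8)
The plan is to assemble the theorem from the three general results proved earlier in this section: Proposition~\ref{stable_sum}, which identifies $Y$ with the algebraic-plus-topological decomposition $X_0\oplus X_1$; Proposition~\ref{ergodic}, which translates membership of $Y$ into the ergodic condition \eqref{lim_ergodic}; and Theorem~\ref{mlog_gen} (together with its polynomial specialisation Theorem~\ref{thm:poly}), which supplies the quantitative decay rate. The key observation is that the hypotheses here are precisely the union of the hypotheses of those three results, so most of the work is a matter of checking that the various notations of $Y$, $X_0$, $X_1$ coincide.

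First I would invoke Proposition~\ref{stable_sum}: since $T$ is uniformly bounded and $\sigma(A)\cap i\RR=\{0\}$, we have $Y=X_0\oplus X_1$ with $X_0=\Ker(A)$ and $X_1=\closure{\Ran(A)}$, and for $x\in Y$ the limit $\lim_{t\to\infty}T(t)x$ equals $Px$, the projection onto $X_0$ along $X_1$. Next, because $A=B-C$ with $C$ invertible and $Q=C^{-1}B$ power-bounded and weak$^*$-continuous (i.e.\ $Q=U'$), Proposition~\ref{ergodic} applies with exactly the same spaces $X_0$ and $X_1$: it yields that $x\in Y=Z$ if and only if there exists $y\in\Ker(A)=\Fix(Q)$ satisfying \eqref{lim_ergodic}, and moreover that the subspace $Z_0=X_0\oplus\Ran(A)$ is characterised by the convergence in \eqref{lim_ergodic} being $O(n^{-1})$. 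Combining these two, $x\in Y$ iff \eqref{lim_ergodic} holds for some $y\in\Fix(Q)$, and in that case $y=Px=\lim_{t\to\infty}T(t)x$, since $y\in X_0$ and $x-y\in X_1$ forces $Px=y$.

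For the quantitative half, suppose the convergence in \eqref{lim_ergodic} is $O(n^{-1})$, so that by Proposition~\ref{ergodic} we have $x-y\in Z_0=X_0\oplus\Ran(A)$, hence $x-y=x_0+Aw$ for some $x_0\in X_0$ and $w\in D(A)=X$ (the generator is bounded). Then $T(t)(x-y)=x_0+T(t)Aw$, and since $y=Px$ with $x-y\in X_1$ we in fact have $x_0=0$, so $T(t)x-y=T(t)Aw=AT(t)w$. Now the resolvent bound $\|R(is,A)\|\le m(|s|)$ for $0<|s|\le1$ together with $\sigma(A)\cap i\RR=\{0\}$ and uniform boundedness of $T$ lets me apply Theorem~\ref{mlog_gen}, giving $\|AT(t)\|=O(\mlog^{-1}(ct))$ for each $c\in(0,1)$, and therefore $\|T(t)x-y\|=\|AT(t)w\|\le\|w\|\,\|AT(t)\|=O(\mlog^{-1}(ct))$, which is \eqref{T_mlog}. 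The polynomial case \eqref{T_log}, including the removal of the logarithm in the Hilbert space setting, follows in the same way by substituting Theorem~\ref{thm:poly} for Theorem~\ref{mlog_gen}.

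The main obstacle is the bookkeeping at the interface between Propositions~\ref{stable_sum} and \ref{ergodic}: one must verify carefully that the kernel and closed range appearing in the semigroup decomposition literally agree with the corresponding subspaces built from $Q=C^{-1}B$, and in particular that $\Ker(A)=\Fix(Q)$ and $\closure{\Ran(A)}=\{Cx:x\in\closure{\Ran(I-Q)}\}$. The first is immediate from $Ax=0\iff Bx=Cx\iff Qx=x$; the second uses invertibility of $C$ and $A=C(Q-I)\cdot(-1)$, so $\Ran(A)=C\,\Ran(I-Q)$ and taking closures commutes with the bounded invertible operator $C$. The only genuinely delicate point in the rate argument is the identification $T(t)x-y=AT(t)w$, which relies on knowing $x-y$ lies in $\Ran(A)$ (not merely its closure) precisely when the ergodic averages decay at rate $O(n^{-1})$ — and this is exactly the content of the $Z_0$ part of Proposition~\ref{ergodic}.
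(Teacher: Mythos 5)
Your proposal is correct and follows essentially the same route as the paper: the characterisation of $Y$ is obtained by combining Propositions~\ref{stable_sum} and \ref{ergodic}, and the quantified decay comes from writing $x-y=Aw$ (via the $Z_0$ part of Proposition~\ref{ergodic}) and applying Theorem~\ref{mlog_gen} and Theorem~\ref{thm:poly} to $T(t)x-y=AT(t)w$. Your extra care at the interface (checking $\Ker(A)=\Fix(Q)$ and $\closure{\Ran(A)}=C\,\closure{\Ran(I-Q)}$) is exactly the bookkeeping the paper leaves implicit.
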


\begin{proof}
The description of the set $Y$ follows immediately by combining Propositions~\ref{stable_sum} and \ref{ergodic}. If  the convergence in \eqref{lim_ergodic} is like $O(n^{-1})$ as $n\to\infty$, then by Proposition~\ref{ergodic} we have that $x-y=Az$ for some $z\in X$, and hence 
$$T(t)x-y=T(t)(x-y)=T(t)Az,\quad t\ge0.$$ 
Thus \eqref{T_mlog} follows from Theorem~\ref{mlog_gen}, and \eqref{T_log} and the statement after it follow from Theorem~\ref{thm:poly}.
\end{proof}

\begin{rem}
By Remark~\ref{sublinear} it is possible to obtain the  unquantified statements of Theorem~\ref{erg_quant} under weaker assumptions.
\end{rem} 

We now come to the proof of Theorem~\ref{cor:gen}.

\begin{proof}[Proof of Theorem~\ref{cor:gen}]
Note first that $X$ has a predual $X_*$ for each choice of $p$. Indeed, if $1<p\le\infty$ then $X$ is the dual of $X_*=\ell^q(\ZZ;\CC^m)$, where $q$ is the H\"older conjugate of $p$, and if $p=1$ then $X$ is the dual of $X_*=c_0(\ZZ;\CC^m)$, the space of $\CC^m$-valued sequences $(x_k)_{k\in\ZZ}$ such that $|x_k|\to0$ as $k\to\pm\infty$. 
 Since $\sigma(A_0)$ is contained in the open left half-plane by assumption (A3), $A_0$ is invertible and hence so is $M_0$.  Moreover, $\sigma(A)\cap i\RR=\{0\}$ by assumption (A4) and Theorem~\ref{thm:Aspec}, and by Proposition~\ref{prp:Rnormestimates} we have that $\|R(is,A)\|\asymp|s|^{-n_\phi}$ as $|s|\to0$.   For $j=0,1$ let $M_j\in\B(X)$ denote the operator given by $M_j(x_k)=(A_jx_k)$, noting that both $M_0$ and $M_1$ commute with the right-shift  operator $S$ on $X$. Moreover,  let $M,N\in\B(X)$ be given by $M=M_1(-M_0)^{-1}$ and $N=(-M_0)^{-1}M_1$  so that $Mx=(A_1R_0 x_k)_{k\in\ZZ}$ and $Nx=( R_0A_1 x_k)_{k\in\ZZ}$ for all $x=(x_k)_{k\in\ZZ}\in X$. Then $A=B-C$ with $B=SM_1$ and $C=-M_0$. Let $Q=C^{-1}B$. Then $Q=SN$ and in particular $Q=U'$, where $U\in\B(X_*)$ is given by $Ux=(A_1^TR_0^T x_{k+1})_{k\in\ZZ}$. Moreover, for $n\ge1$, it follows from our assumption on the matrices $A_0$, $A_1$ that $N^n=\phi(0)^{n-1}N$, and hence $Q^n=\phi(0)^{n-1}S^nN$. Note also that $|\phi(0)|=1$ since $0\in\Omega_\phi$. In particular,  
$$\|Q^n\|=\|S^nN\|\le\|N\|,\quad n\ge1,$$
so $Q$ is power-bounded. Moreover, 
$$Q^nC^{-1}=(-M_0)^{-1}\phi(0)^{n-1}S^nM,\quad n\ge1.$$
Suppose that $1\le p<\infty$ and let $x_0\in X$. Then $\ker(A)=\Fix(Q)=\{0\}$, and since $M_0$ is an isomorphism and $|\phi(0)|=1$, it follows from Theorem~\ref{erg_quant} that $x_0\in Y$ if and only if \eqref{lim_model_finite} holds, and that $x(t)\to0$ as $t\to\infty$ whenever this is the case. If $p=\infty$ then by Theorem~\ref{thm:Aspec} any  $z\in\ker(A)$ has the form $z=(\phi(0)^kz_0)$ for some $z_0\in\Ran(A_0^{-1}A_1)$. For such a $z\in\ker(A)$ let $y=Mz$. Then $y=(\phi(0)^ky_0)$, where $y_0=A_1A_0^{-1}z_0$. In particular, $y_0\in\Ran(A_1)$ and $z_0=Ly_0$. Moreover, $\phi(0)^nS^nMz=y$ for all $n\ge1$ and hence, given $x_0\in X$, Theorem~\ref{erg_quant} implies that $x_0\in Y$ if and only if \eqref{lim_model_infty} holds for some $y_0\in\Ran(A_1)$, and that $x(t)\to z$ as $t\to\infty$ whenever this is the case. When $p=1$ and when $p=\infty$, it is straightforward to see that \eqref{lim_model_finite} and, respectively, \eqref{lim_model_infty} are not satisfied for all $x_0\in X$, whereas \eqref{lim_model_finite} does hold for all $x_0\in X$ when $1<p<\infty$, as can be seen by considering  the dense subspace of finitely supported sequences. Thus $Y=X$ if and only if $1<p<\infty$ and part~(a) is established. For part~(b) note that~(ii) follows immediately from Theorem~\ref{erg_quant}, while if $1\le p<\infty$ and convergence in \eqref{lim_model_finite} is like $O(n^{-1})$ as $n\to\infty$,  Theorem~\ref{erg_quant} shows that 
$$\|x(t)\|=O\left(\bigg(\frac{\log t}{t}\bigg)^{{1/n_\phi}}\right),\quad t\to\infty,$$
and that the logarithm can be omitted when $p=2$. The estimate in \eqref{model_log_finite} now follows by appealing to the  Riesz-Thorin theorem \cite[Theorem~9.3.3]{Gar07} to interpolate these bounds for $1<p<2$ and $2<p<\infty$. Part~(c) follows similarly using the fact that $\dot{x}(t)=AT(t)x_0$ for all $x_0\in X$ and $t\ge0$.
\end{proof}

\begin{rem}\label{rem:gen}
\begin{enumerate}[(a)]
\item The statement in part~(a)(i) can also be deduced from the well-known Arendt-Batty-Lyubich-V\~{u} theorem; see \cite{AreBat88, LyuVu88}. Indeed, the semigroup $T$ is uniformly bounded by assumption (A5), and by Theorem~\ref{thm:Aspec} the other assumptions ensure that the generator $A$ of $T$ has no residual spectrum on the imaginary axis. This argument can be extended to obtain strong stability of $T$ also in the case where $\Omega_\phi$ meets the imaginary axis in several (but necessarily at most finitely many) points.
\item \label{rem:model_optimality}
It follows from Remark~\ref{rem:optimality} together with Lemma~\ref{lem:even} and an application of the uniform boundedness principle that the rates in Theorem~\ref{cor:gen} are optimal when $p=2$ and worse than optimal by at most a logarithmic term when $p\ne 2$. We expect that the quantified statements in Theorem~\ref{cor:gen} remain true without the logarithms even when $p\ne2$, but we leave it as an open problem whether this is indeed the case; see also Remark~\ref*{rem:plat}\eqref{rem:plat_log} and Theorem~\ref{cor:rob} below.
\end{enumerate}
\end{rem}

\section{The platoon model}
\label{sec:plat}

In this section we study a linearised model of an infinitely long platoon of vehicles. The objective is to drive the solution of the system to a configuration in which all of the vehicles are moving at a given constant velocity $v\in\C$ and the separation between the vehicles $k$ and $k-1$ is equal to $c_k\in\C$, $k\in\ZZ$.
For $k\in\ZZ$ and $t\ge0$, we write $d_k(t)$ for the separation between vehicles $k$ and $k-1$ at time $t$, $v_k(t)$ for the velocity of vehicle $k$ at time $t$ and  $a_k(t)$ for the acceleration of vehicle $k$ at time $t$. Furthermore, we let $y_k(t)=c_k-d_k(t)$ denote the deviation of the actual separation from the target separation of vehicles $k$ and $k-1$ at time $t$, and we similarly let $w_k(t)=v_k(t)-v$ stand for the excess velocity of vehicle $k$ at time $t$. Note in particular that, as the variables are allowed to be complex, they can be used to describe the dynamics of the vehicles in the complex plane and not just along a straight line. 
On the other hand, if all the variables  are constrained to be real, the same model can be used to study the behaviour of an infinitely long  chain of vehicles.

As the basis of our study we consider a linear 
model which has been used to study infinitely long chains of cars on a highway
in~\cite{PloSch11,PloWou14,ZwaFir13}, namely
\eqn{
\label{eq:platoon}
\pmat{\dot{y}_k(t)\\\dot{w}_k(t)\\\dot{a}_k(t)} &= \pmat{w_k(t)-w_{k-1}(t)\\a_k(t)\\-\tau\inv a_k(t)+\tau\inv u_k(t)},\quad k\in\ZZ,\;t\ge0,
}
where $\tau>0$ is a parameter and $u_k(t)$ is the control input of vehicle $k$.
In the above references 
the model~\eqref{eq:platoon} was studied on the space $X=\lp[2](\C^3)$, and it has in particular been shown that the system is not exponentially stabilisable~\cite{JovBam05,ZwaFir13} but
that strong stability can be achieved~\cite{CurIft09,JovBam05}. In this paper we study model~\eqref{eq:platoon} on the spaces $X=\lp[p](\C^3)$ for $1\le p\le\infty$, and in particular we include the case $p=\infty$ argued in~\cite{JovBam05} to be the most realistic.

We begin by rewriting the problem in the form of \eqref{eq:ODEintro} for the state vectors
$$x_k(t) = \pmat{y_k(t)\\w_k(t)\\a_k(t)},\quad k\in\ZZ,\;t\ge0,$$
by applying an identical state feedback 
$$u_k(t) = \beta_1y_k(t) + \beta_2w_k(t) + \beta_3a_k(t),\quad k\in\ZZ,\;t\ge0,$$
to each of the vehicles, where $\beta_1,\beta_2,\beta_3\in\CC$ are constants. This control law requires that the state vectors $x_k(t)$ are known and available for feedback. Equations~\eqref{eq:platoon} can then be written in the form~\eqref{eq:ODEintro} with matrices
\eq{
\Ad
=\pmat{0&1&0\\0&0&1\\-\ga_0&-\ga_1&-\ga_2}
\qquad\mbox{and}\qquad
\Aod=\pmat{0&-1&0\\0&0&0\\0&0&0} 
}
where $\ga_0= -\beta_1/\tau$, $\ga_1 = - \beta_2/\tau$, and $\ga_2 = (1-\beta_3)/\tau$ can be freely assigned by choosing appropriate feedback parameters $\beta_1,\beta_2,\beta_3\in\C$. This in turn allows us to choose the eigenvalues of the matrix $\Aod$. Since $\rank A_1=1$, we know from Remark~\ref{rem:ass1} that conditions (A1) and (A2) of Assumption~\ref{ass:PScond} are satisfied, and the characteristic function $\phi$ is given by the formula
\eq{
\phi(\gl) 
= \frac{\ga_0}{p(\gl)}, \qquad \gl\in\CC\setminus \gs(\Ad),
}
where $p(\gl)=\gl^3+\ga_2\gl^2+\ga_1\gl+\ga_0$ is the characteristic polynomial of $\Ad$. 
Note that $\phi(0)=1$ and hence $0\in\gs(A)$ by Theorem~\ref{thm:Aspec}. It follows that the platoon system cannot be stabilised exponentially.
Our main goal is to choose the parameters $\ga_0,\ga_1,\ga_2\in\C$ in such a way that the platoon system achieves good stability properties. 
The simplest possible characteristic polynomial is $p(\lambda)=(\lambda-\lambda_0)^3$ corresponding to the choices 
$\alpha_0=-\lambda_0^3$, $\alpha_1=3\lambda_0^2$, and $\alpha_2=-3\lambda_0$ for a fixed $\lambda_0\in\CC$. In this case 
$$\Omega_\phi=\big\{\lambda\in\CC:|\lambda-\lambda_0|=|\lambda_0|\big\},$$
 so in order for  conditions
(A3) and (A4) of Assumptions~\ref{ass:further} to be satisfied, so that $\gs(A)\subset \C_-\cup \set{0}$, it is necessary to choose  $\lambda_0=-\zeta$ for some $\zeta>0$.  
It is possible in principle to derive more general necessary geometric conditions on the roots of $p$ which ensure that (A3) and (A4) are satisfied. We restrict ourselves here to exhibiting, in Figure~\ref{fig:Platspec},  several examples of level sets $\Omega_\phi$ and spectra $\gs(A)$ for different choices of the parameters $\ga_0,\ga_1,\ga_2\in\C_-$.

\begin{figure}[ht]
\hspace{-.8cm}
    \begin{minipage}[c]{.19\linewidth}
	\includegraphics[width=1.2\linewidth]{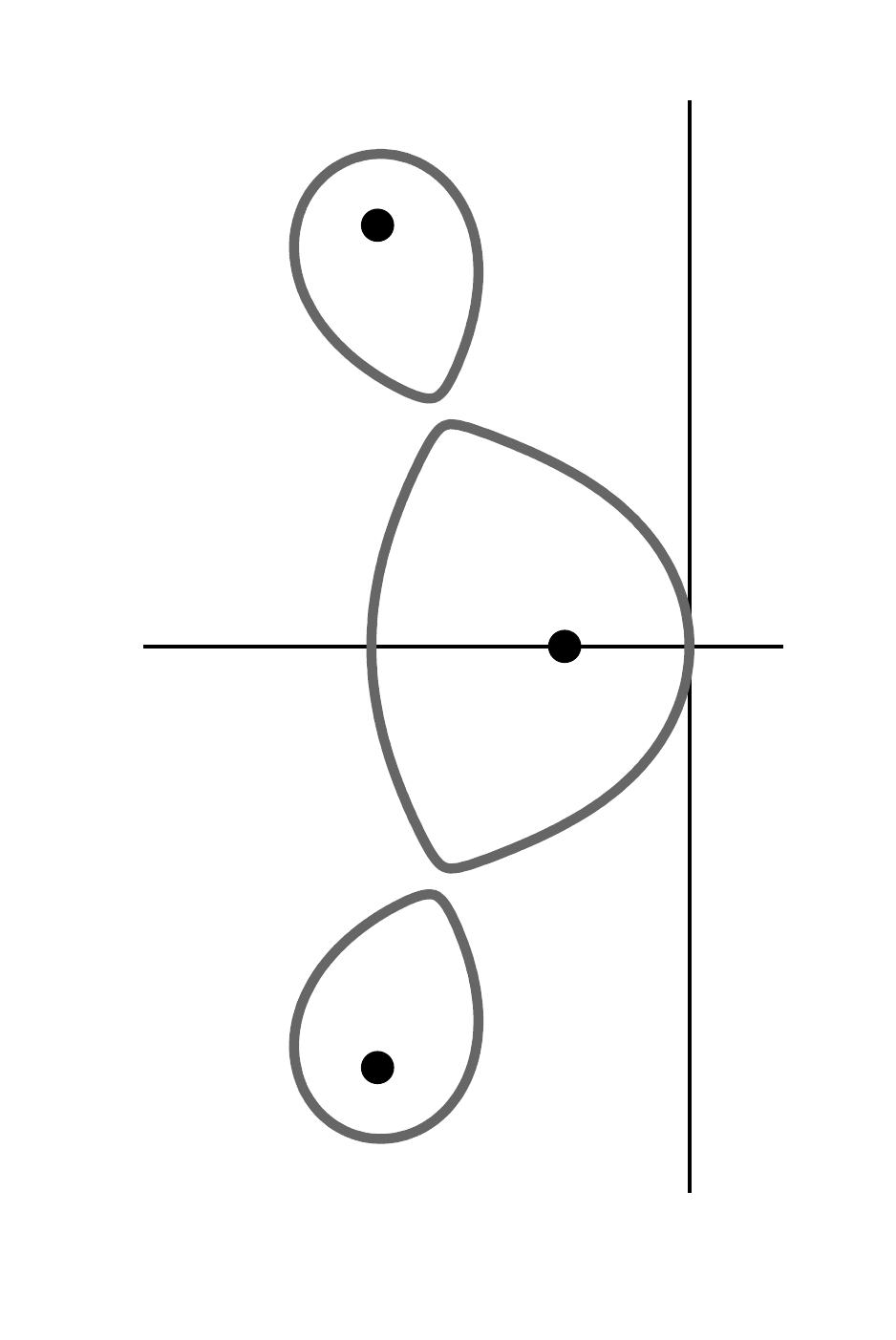}
    \end{minipage}
    \begin{minipage}[c]{.18\linewidth}
      \begin{center}
	\includegraphics[width=1.2\linewidth]{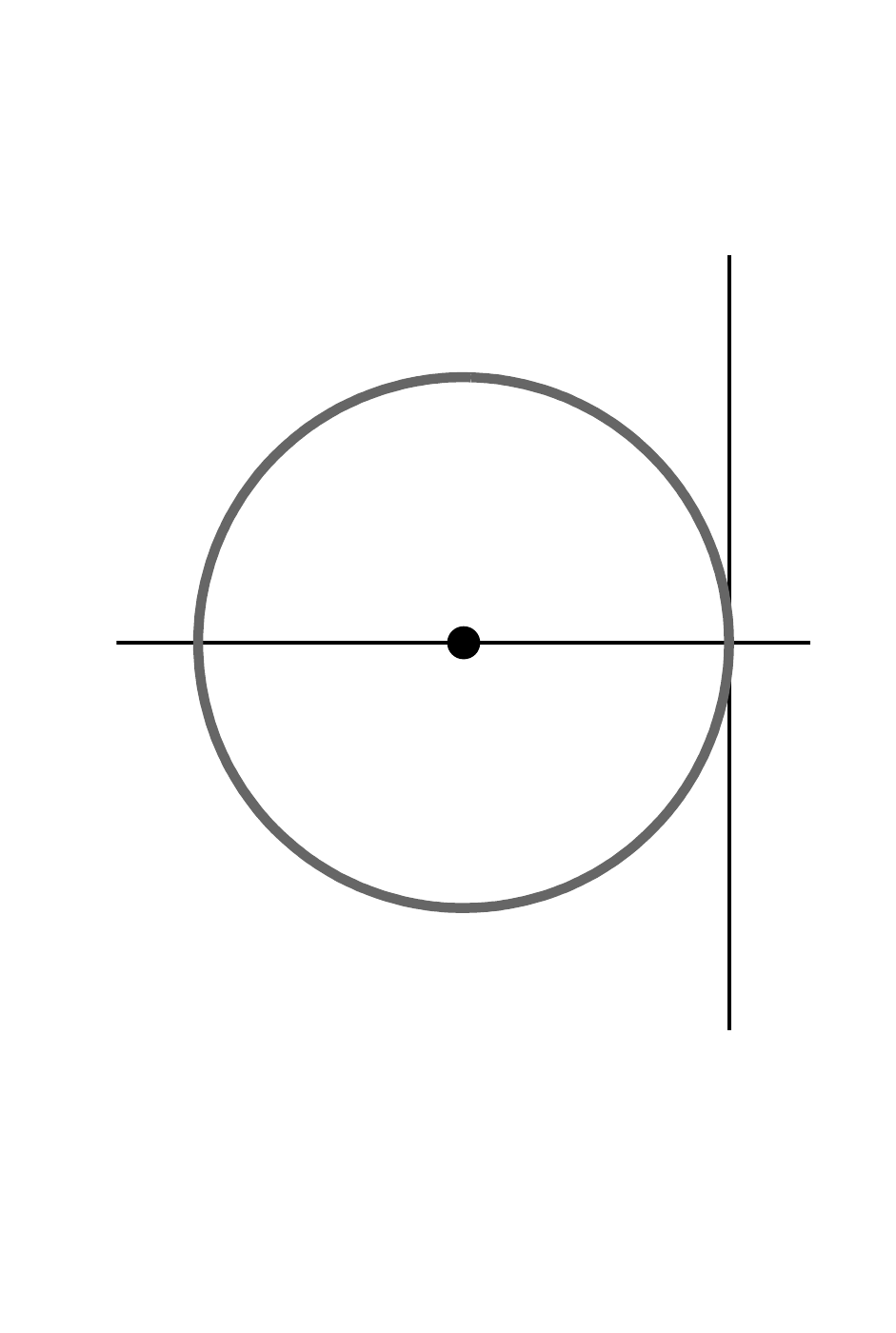}
      \end{center}
    \end{minipage}
    \begin{minipage}[c]{.19\linewidth}
      \begin{center}
	\includegraphics[width=1.1\linewidth]{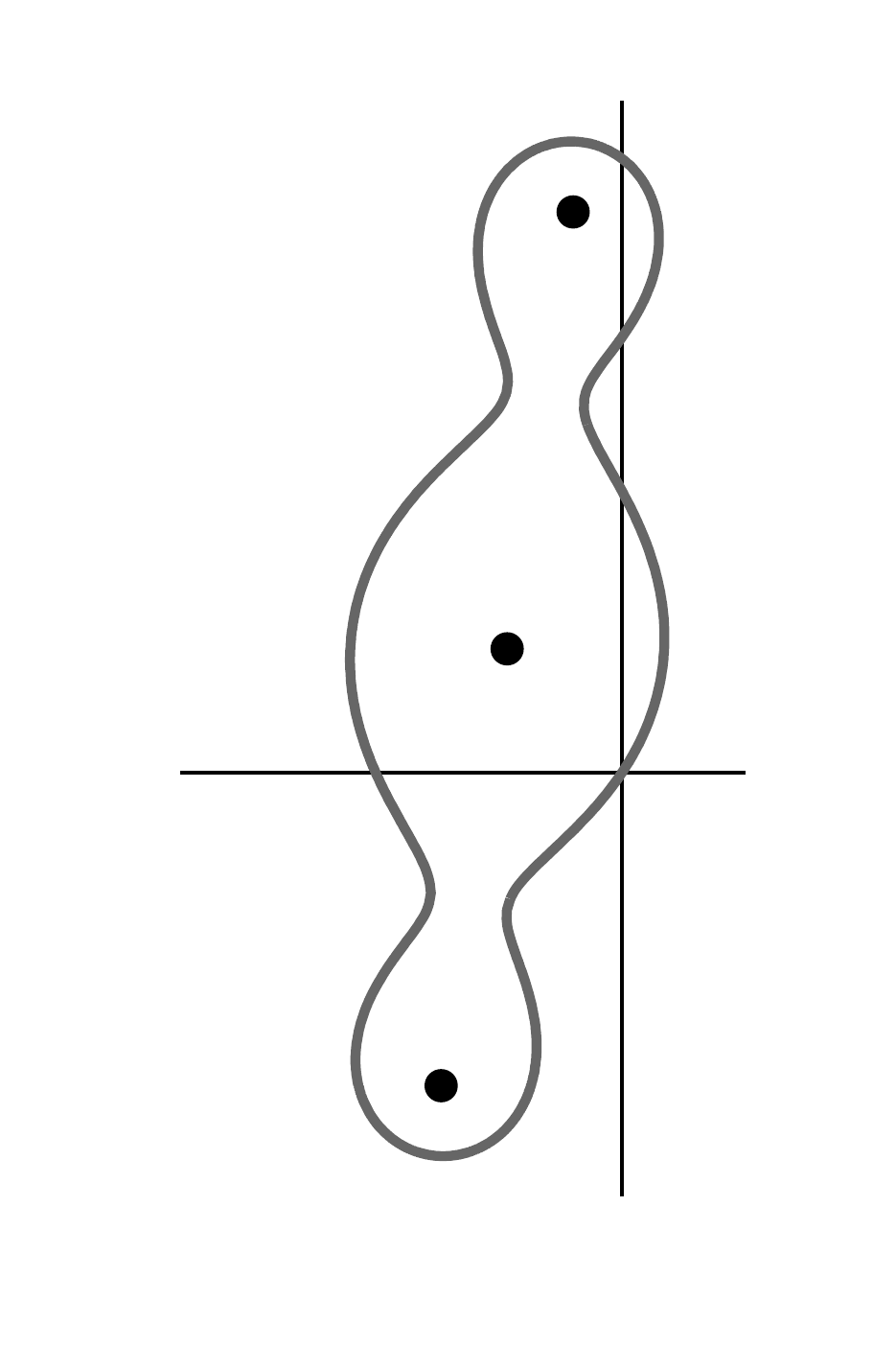}
      \end{center}
    \end{minipage} 
    \begin{minipage}[c]{.17\linewidth}
      \begin{center}
	\includegraphics[width=1.2\linewidth]{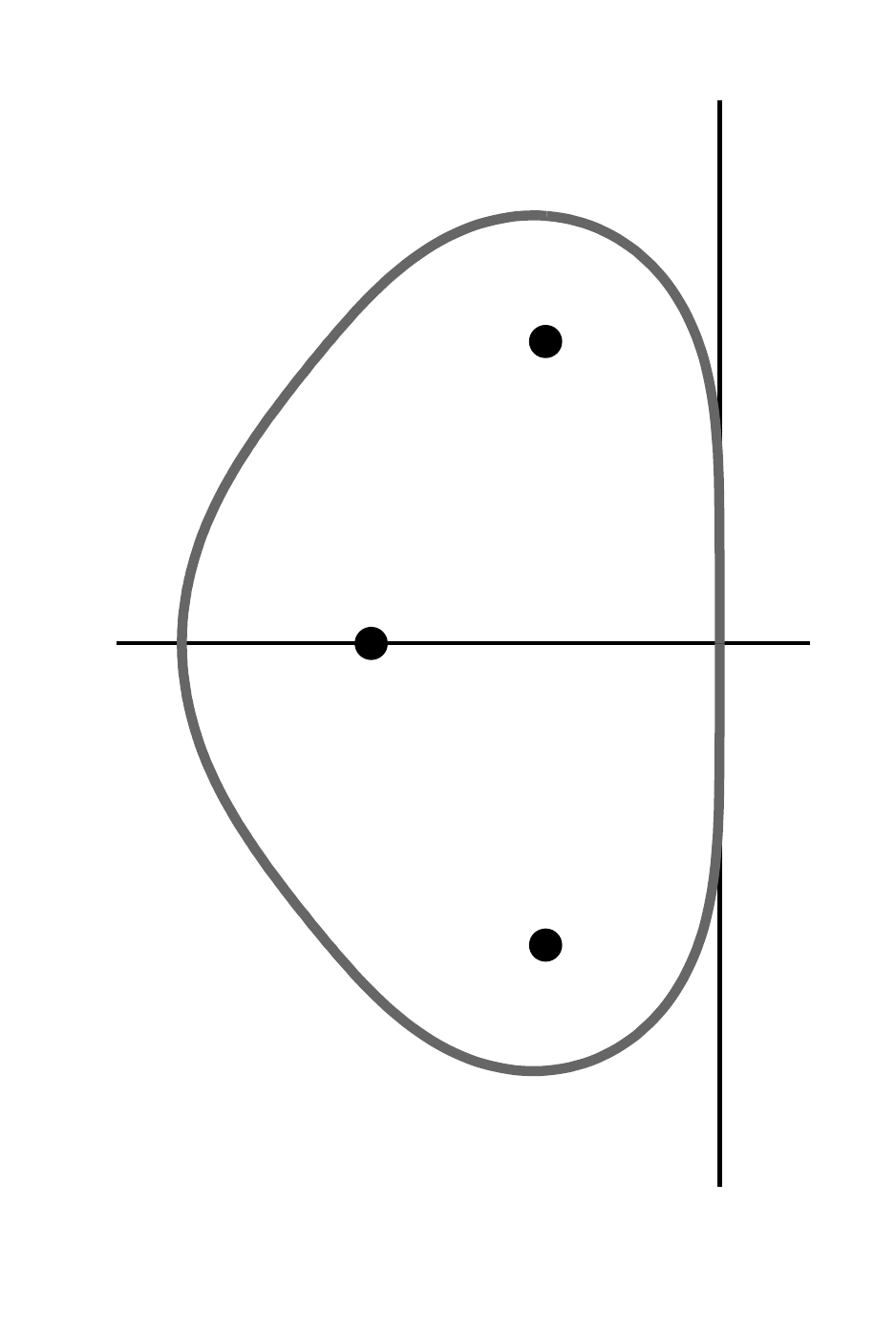}
      \end{center}
    \end{minipage}
    \begin{minipage}[c]{.21\linewidth}
      \begin{center}
	\includegraphics[width=1.25\linewidth]{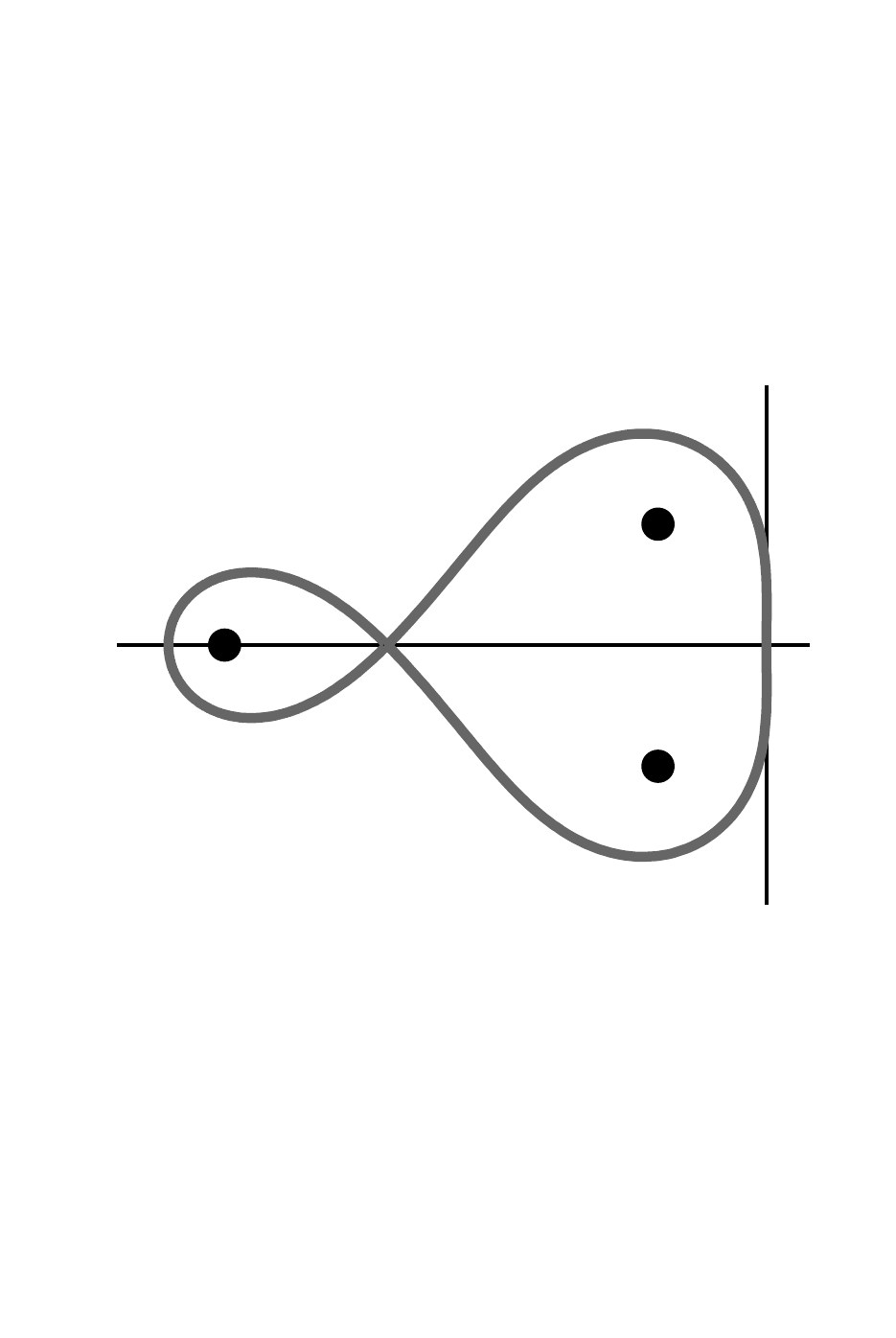}
      \end{center}
    \end{minipage} 
  \caption{The level set $\Omega_\phi$ and $\gs(\Ad)$ for various choices of $\Ad$.}
  \label{fig:Platspec}
\end{figure}

We now consider the Cauchy problem \eqref{CP} for the platoon problem with a characteristic function $\phi$ having just one pole. Our main asymptotic result is a consequence of the general results proved in the preceding sections. Recall that for $1\le p\le\infty$ we denote the set of initial states $x(0)=x_0$ leading to convergent solutions $x(t)$ of the platoon system by
$$Y=\left\{x_0\in X:\lim_{t\to\infty} x(t)\;\mbox{exists}\right\}.$$

\begin{thm}\label{cor:plat}
Let $1\le p\le\infty$ and consider the platoon model with the choices $\alpha_0=\zeta^3$, $\alpha_1=3\zeta^2$ and $\alpha_2=3\zeta$, where $\zeta>0$ is a fixed real number.  
\begin{enumerate}
\item[\textup{(a)}] We have $Y=X$ if and only if $1< p<\infty$. More specifically:
\begin{enumerate}
\item[\textup{(i)}] If $1<p<\infty$ then $Y=X$ and $x(t)\to0$ for all $x_0\in X$.

\item[\textup{(ii)}] If $p=1$ and $x_0\in X$ then $x_0\in Y$ if and only if the vector $y_0=(y_k(0))_{k\in\ZZ}\in\ell^1(\ZZ)$ of initial deviations is such that
\eqn{
\label{lim_plat_finite}
\bigg\|\frac{1}{n}\sum_{k=1}^nS^{k}y_0\bigg\|_{\ell^1(\ZZ)}\!\!\to0,\quad n\to\infty,
}
and if this  holds then $x(t)\to0$ as $t\to\infty$. 
\item[\textup{(iii)}] If $p=\infty$ and $x_0\in X$ then $x_0\in Y$ if and only if there exists $c\in\CC$ such that for $y=(\dotsc,c,c,c,\dotsc)$ we have
\eqn{
\label{lim_plat_infty}
\bigg\|\frac{1}{n}\sum_{k=1}^nS^{k}y_0-y\bigg\|_{\ell^\infty(\ZZ)}\!\!\to0,\quad n\to\infty,
}
and if this holds then $x(t)\to z$ as $t\to\infty$, where 
\eq{
z=\left(\dotsc,\begin{pmatrix}
c\\ -\zeta c/3\\ 0
    \end{pmatrix},\begin{pmatrix}
c\\ -\zeta c/3\\ 0
    \end{pmatrix},\begin{pmatrix}
c\\ -\zeta c/3\\ 0
    \end{pmatrix},\dotsc\right).
} 
\end{enumerate}

\item[\textup{(b)}] 
\begin{enumerate}
\item[\textup{(i)}] If $1\le p<\infty$ and the decay in \eqref{lim_plat_finite} is like $O(n^{-1})$ as $n\to\infty$  then
\eq{
\|x(t)\|=O\left(\bigg(\frac{(\log t)^{|1-2/p|}}{t}\bigg)^{1/{2}}\right),\quad t\to\infty.
}
\item[\textup{(i)}] If $p=\infty$ and the decay in \eqref{lim_plat_infty} is like $O(n^{-1})$ as $n\to\infty$ then
\eq{
\|x(t)-z\|=O\bigg(\bigg(\frac{\log t}{t}\bigg)^{1/{2}}\bigg),\quad t\to\infty.
}
\end{enumerate}
\item[\textup{(c)}] For $1\le p\le\infty$ and all  $x_0\in X$ we have
 \eq{
\|\dot{x}(t)\|=O\left(\bigg(\frac{(\log t)^{|1-2/p|}}{t}\bigg)^{1/{2}}\right),\quad t\to\infty.
} 
 \end{enumerate}
\end{thm}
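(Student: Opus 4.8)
The plan is to obtain Theorem~\ref{cor:plat} as a direct specialisation of the general result Theorem~\ref{cor:gen}, so the work divides into three parts: verifying that hypotheses \textup{(A1)}--\textup{(A5)} hold for this choice of parameters, identifying the characteristic function $\phi$ together with the resolvent growth parameter $n_\phi$, and then making the abstract ergodic conditions of Theorem~\ref{cor:gen}(a) concrete by computing the operators $M$ and $L$ explicitly. For the first part, note that $\rank A_1 = 1$, so \textup{(A1)} and \textup{(A2)} hold by Remark~\ref{rem:ass1}, and with $\alpha_0 = \zeta^3$, $\alpha_1 = 3\zeta^2$, $\alpha_2 = 3\zeta$ the characteristic polynomial of $A_0$ is $(\lambda+\zeta)^3$, so $\sigma(A_0) = \{-\zeta\} \subset \CC_-$, giving \textup{(A3)}, and the characteristic function is $\phi(\lambda) = \zeta^3/(\lambda+\zeta)^3$. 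Then $\Omega_\phi = \{\lambda\in\CC : |\lambda+\zeta| = \zeta\}$ is the circle through the origin tangent to the imaginary axis there, so $0\in\Omega_\phi\subset\CC_-\cup\{0\}$, and $\phi'(0) = -3/\zeta\neq0$; this gives \textup{(A4)}. Finally, $\phi$ is precisely of the form treated in Lemma~\ref{lem:repeated} (with $k = 3$), so both parts of condition \eqref{eq:unifbddabscondALTphi} hold, and Theorem~\ref{thm:Aunifbdd} yields uniform boundedness of the semigroup generated by $A$, which is \textup{(A5)}.

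For the second part, a short computation gives $|\phi(is)|^2 = \zeta^6(s^2+\zeta^2)^{-3}$, hence $1-|\phi(is)|^2 = s^2(s^4+3\zeta^2 s^2+3\zeta^4)(s^2+\zeta^2)^{-3} \asymp s^2$ as $|s|\to0$; since $|\phi(is)|\to1$ this gives $1-|\phi(is)|\asymp|s|^2$, so by Lemma~\ref{lem:even} the resolvent growth parameter is $n_\phi = 2$. This already delivers parts (b) and (c) of the theorem, which are verbatim restatements of parts (b) and (c) of Theorem~\ref{cor:gen} with $n_\phi$ replaced by $2$; no further argument is needed there.

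The remaining and most labour-intensive part is the translation in (a). Inverting the companion matrix $A_0$ and multiplying, one finds that $A_1 A_0^{-1}$ is the matrix with entry $-1$ in the top-left position and zeros elsewhere, so the operator $M$ of Theorem~\ref{cor:gen} sends $x_0 = ((y_k(0),w_k(0),a_k(0))^T)_{k\in\ZZ}$ to $((-y_k(0),0,0)^T)_{k\in\ZZ}$. Since $\phi(0) = 1$, the Ces\`aro averages in \eqref{lim_model_finite} reduce --- up to an irrelevant overall sign and the isometric embedding of scalar sequences into $\CC^3$-valued ones --- to $\frac1n\sum_{k=1}^n S^k y_0$ with $y_0 = (y_k(0))_{k\in\ZZ}$ the scalar sequence of initial deviations, so \eqref{lim_model_finite} is exactly \eqref{lim_plat_finite}; combined with Theorem~\ref{cor:gen}(a)(i)--(ii) this proves (a)(i) and (a)(ii). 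For $p = \infty$ one has $\Ran(A_1) = \CC e_1$, so the vector $y_0\in\Ran(A_1)$ in Theorem~\ref{cor:gen}(a)(iii) is a scalar multiple $c_\ast e_1$ and the associated constant sequence in \eqref{lim_model_infty} corresponds to the constant $c = -c_\ast$ in \eqref{lim_plat_infty}; moreover $\Ran(A_0^{-1}A_1) = \CC(3/\zeta,-1,0)^T$, and inverting the restriction of $A_1 A_0^{-1}$ to this line gives $L e_1 = (-1,\zeta/3,0)^T$, so that $z = (\phi(0)^k L y_0)$ is the constant sequence with value $(c,-\zeta c/3,0)^T$, precisely as stated. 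This establishes (a)(iii) and completes the proof.

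I do not expect a genuine obstacle, since everything reduces to Theorem~\ref{cor:gen}; the one point demanding care is the bookkeeping in the matrix algebra of the third part --- correctly identifying $M$, $\Ran(A_1)$, $\Ran(A_0^{-1}A_1)$ and the inverse isomorphism $L$, and tracking signs --- so that the concrete statement, including the exact form of the limit state $z$, matches the abstract one on the nose.
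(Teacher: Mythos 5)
Your proposal is correct and follows essentially the same route as the paper: verify (A1)--(A5) via Remark~\ref{rem:ass1}, Lemma~\ref{lem:repeated} and Theorem~\ref{thm:Aunifbdd}, compute $n_\phi=2$ from $1-\abs{\phi(is)}\asymp s^2$, and then specialise Theorem~\ref{cor:gen} using the explicit matrices $A_1A_0^{-1}$ and $A_0^{-1}A_1$. Your bookkeeping for $M$, $\Ran(A_1)$, $\Ran(A_0^{-1}A_1)$ and $L$ (including the two compensating signs leading to $z$ with entries $(c,-\zeta c/3,0)^T$) checks out against the paper's computation.
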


\begin{proof}
 Note that (A1) holds and that (A2) is satisfied for the function
 $$\phi(\lambda)=\frac{\zeta^3}{(\lambda+\zeta)^3},\quad \lambda\ne-\zeta.$$
  As above, we have $\gs(\Ad)=\set{-\dr}$, so that (A3) holds, and since 
  $$\Omega_\phi=\{\lambda\in\CC:|\lambda+\zeta|=\zeta\},$$ 
  we see that (A4) holds as well. Furthermore, (A5) holds by Lemma~\ref{lem:repeated} and the first part of Theorem~\ref{thm:Aunifbdd}.
A simple calculation based on the ideas used in Lemma~\ref{lem:even} shows that $n_\phi=2$. Noting that $\phi(0)=1$ and that
$$A_1(-A_0)^{-1}=\pmat{1&0&0\\0&0&0\\0&0&0},\quad\quad (-A_0)^{-1}A_1=\pmat{0&-3/\zeta&0\\0&1&0\\0&0&0},$$
the result follows  from Theorem~\ref{cor:gen}.
\end{proof}

\begin{rem}\label{rem:plat}
\begin{enumerate}[(a)]
\item \label{rem:plat_log} We do not know whether the logarithms in the decay estimates of Theorem~\ref{cor:plat} are needed when $p\ne2$. We suspect not; see also Remark~\ref*{rem:gen}\eqref{rem:model_optimality} above and Theorem~\ref{cor:rob} below.
\item  \label{prp:platoonnoncontr} It follows from straightforward estimates that the semigroup $T$ generated by the operator $A$ in the platoon model is  in general not contractive, even when $p=2$.
\item Note that the above analysis can also be used in the setting considered in \cite{PloWou14}, where the objective of attaining given target separations as $t\to\infty$  is replaced by the objective that the separations should approach $c_{k}+h v_k(t)$, where $h>0$  and $c_k\in\CC$ are  constants and $v_k(t)$ is the velocity of vehicle $k\in\ZZ$ at time $t\ge0$.
\end{enumerate}
\end{rem}

\section{The robot rendezvous problem}
\label{sec:robots}

We now return to the robot rendezvous problem, which corresponds in the general setting of \eqref{eq:ODEintro} to the choices $m=1$, $A_0=-1$ and $A_1=1$. In particular, the Banach space we are working in is $X=\ell^p(\ZZ)$, where $1\le p\le \infty$. The following result, which can be viewed as an extension of the results in \cite{FeiFra12}, is in large part a consequence of Theorem~\ref{cor:gen} but with slightly sharper estimates on the rates of decay. For  $1\le p\le\infty$, we once again use the notation
$$Y=\left\{x_0\in X:\lim_{t\to\infty} x(t)\;\mbox{exists}\right\},$$
where $x(t)$, $t\ge0$, now denotes the solution of the robot rendezvous problem with initial condition $x(0)=x_0$.

\begin{thm}\label{cor:rob}
Let $1\le p\le\infty$ and consider the robot rendezvous problem. 
\begin{enumerate}
\item[\textup{(a)}] We have $Y=X$ if and only if $1< p<\infty$. More specifically:
\begin{enumerate}
\item[\textup{(i)}] If $1<p<\infty$ then $Y=X$ and $x(t)\to0$ for all $x_0\in X$.

\item[\textup{(ii)}] If $p=1$ and $x_0\in X$ then $x_0\in Y$ if and only if 
\eqn{
\label{lim_rob_finite}
\bigg\|\frac{1}{n}\sum_{k=1}^nS^{k}x_0\bigg\|\to0,\quad n\to\infty,
}
and if this  holds then $x(t)\to0$ as $t\to\infty$. 
\item[\textup{(iii)}] If $p=\infty$ and $x_0\in X$ then $x_0\in Y$ if and  only if there exists a constant sequence $z\in X$ such that
\eqn{
\label{lim_rob_infty}
\bigg\|\frac{1}{n}\sum_{k=1}^nS^{k}x_0-z\bigg\|\to0,\quad n\to\infty,
}
and if this holds then $x(t)\to z$ as $t\to\infty$.
\end{enumerate}

\item[\textup{(b)}] 
\begin{enumerate}
\item[\textup{(i)}] If $1\le p<\infty$ and the decay in \eqref{lim_rob_finite} is like $O(n^{-1})$ as $n\to\infty$  then
\eqn{
\label{rob_log_finite}
\|x(t)\|=O\big(t^{-1/2}\big),\quad t\to\infty,
}
\item[\textup{(i)}] If $p=\infty$ and the decay in \eqref{lim_rob_infty} is like $O(n^{-1})$ as $n\to\infty$ then
\eqn{
\label{rob_log_infty}
\|x(t)-z\|=O\big(t^{-1/2}\big),\quad t\to\infty.
}
\end{enumerate}
\item[\textup{(c)}] 
For $1\le p\le\infty$ and all  $x_0\in X$ we have
 \eqn{
\label{deriv_rob_log}
\|\dot{x}(t)\|=O\big(t^{-1/2}\big),\quad t\to\infty.
} 
\end{enumerate}
 Finally, the rate $t^{-1/2}$ in \eqref{rob_log_finite}, \eqref{rob_log_infty} and \eqref{deriv_rob_log} is optimal.

\end{thm}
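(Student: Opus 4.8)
The plan is to derive all three optimality claims from a single lower bound on $\|AT(t)\|_{\B(X)}$ together with a Baire category argument. The first step is to reduce the three estimates to a common form. For \eqref{deriv_rob_log} this is immediate, since $\dot{x}(t)=AT(t)x_0$. For \eqref{rob_log_finite} and \eqref{rob_log_infty} I would appeal to Proposition~\ref{ergodic}: specialising the proof of Theorem~\ref{cor:gen} to the robot rendezvous problem, where $C=I$, $Q=S$ and $\phi(0)=1$, the hypothesis that the Ces\`aro means converge at rate $O(n^{-1})$ is exactly the condition $x_0\in\ker(A)\oplus\Ran(A)$; writing $x_0=z+Aw$ with $z\in\ker(A)$ and $w\in X$ and using $T(t)z=z$ gives $x(t)-z=T(t)(x_0-z)=AT(t)w$, and conversely every $w\in X$ arises in this way (when $1\le p<\infty$ one has $\ker(A)=\{0\}$, so $z=0$). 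Hence each of the three estimates asserts that $\|AT(t)w\|=O(t^{-1/2})$ as $w$ ranges over the appropriate set, and their optimality is the statement that this cannot be improved to $o(t^{-1/2})$ uniformly over all admissible $w\in X$.

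The second step is to establish the lower bound. Since $\phi(\lambda)=(1+\lambda)^{-1}$ we have $n_\phi=2$, and Proposition~\ref{prp:Rnormestimates} combined with Lemma~\ref{lem:even} yields $\|R(is,A)\|\asymp|s|^{-2}$ as $|s|\to0$; in particular $\|sR(is,A)\|\to\infty$ as $|s|\to0$. Therefore Remark~\ref{rem:optimality} applies and produces a constant $c>0$ with
\[
\|AT(t)\|_{\B(X)}\ge ct^{-1/2},\qquad t\ge1.
\]
Together with parts~(b) and~(c) of the theorem this already gives $\|AT(t)\|_{\B(X)}\asymp t^{-1/2}$.

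The third step is the Baire category argument. Suppose, for contradiction, that the rate could be improved, i.e.\ that there is a positive continuous function $g$ on $[1,\infty)$ with $t^{1/2}g(t)\to0$ such that $\|AT(t)w\|=O(g(t))$ as $t\to\infty$ for every $w\in X$. Then for each $w\in X$ the family $\{g(t)^{-1}AT(t)w:t\ge1\}$ is bounded in $X$ — it is bounded near infinity by hypothesis and bounded on compact $t$-intervals because $t\mapsto AT(t)w$ is continuous and $g$ is continuous and positive — so by the uniform boundedness principle $\sup_{t\ge1}g(t)^{-1}\|AT(t)\|_{\B(X)}<\infty$, i.e.\ $\|AT(t)\|_{\B(X)}=O(g(t))=o(t^{-1/2})$, contradicting the second step. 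This proves that the rate $t^{-1/2}$ in \eqref{rob_log_finite}, \eqref{rob_log_infty} and \eqref{deriv_rob_log} is optimal. For $p=\infty$ one can moreover exhibit a concrete witness for \eqref{rob_log_infty}: taking $x_0=\delta_0$ one has $(x(t))_k=e^{-t}t^k/k!$ for $k\ge0$ and $0$ otherwise, so $\|x(t)\|_\infty=\sup_{k\ge0}e^{-t}t^k/k!\asymp t^{-1/2}$ by Stirling's formula, while $\delta_0$ satisfies \eqref{lim_rob_infty} with $z=0$ at the rate $O(n^{-1})$. The main obstacle is the passage, via the uniform boundedness principle, from the operator-norm lower bound of Remark~\ref{rem:optimality} to the stated per-vector optimality; a secondary point requiring care is the identification, through Proposition~\ref{ergodic}, of the admissible initial data with $\ker(A)\oplus\Ran(A)$.
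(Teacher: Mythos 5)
Your treatment of the optimality claim is essentially the paper's own argument: the lower bound $\|AT(t)\|\ge ct^{-1/2}$ comes from Proposition~\ref{prp:Rnormestimates}, Lemma~\ref{lem:even} and Remark~\ref{rem:optimality} (since $n_\phi=2$ and $\|sR(is,A)\|\to\infty$), and the passage to per-vector optimality is exactly the uniform boundedness principle invoked in Remark~\ref{rem:gen}. Your identification of the hypotheses in part~(b) with $x_0\in\Ker(A)\oplus\Ran(A)$ via Proposition~\ref{ergodic}, so that $x(t)-z=AT(t)w$, is also correct, and the explicit witness $x_0=\delta_0$ for $p=\infty$ is a nice concrete check.

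However, there is a genuine gap: you never establish the \emph{upper} bound $\|AT(t)\|=O(t^{-1/2})$. You write that ``together with parts~(b) and~(c) of the theorem'' the lower bound gives $\|AT(t)\|\asymp t^{-1/2}$, but parts~(b) and~(c) with the rate $t^{-1/2}$ are precisely what has to be proved. Theorem~\ref{cor:gen} only yields $\big((\log t)^{|1-2/p|}/t\big)^{1/2}$, which carries a logarithmic factor for every $p\ne 2$; the whole point of Theorem~\ref{cor:rob} beyond Theorem~\ref{cor:gen} is that this logarithm can be removed for all $1\le p\le\infty$ in the robot rendezvous problem. The paper does this by an explicit computation special to this example: it uses the representation $AT(t)x_0=e^{-t}\big(y(t)\ast Sx_0-x_0\big)$ from \cite[Theorem~3]{FeiFra12}, where $y_k(t)=\frac{t^k}{k!}-\frac{t^{k+1}}{(k+1)!}$ for $k\ge0$, applies Young's inequality to get $\|AT(t)\|\le e^{-t}\big(1+\|y(t)\|_{\ell^1(\ZZ)}\big)$, bounds $\|y(t)\|_{\ell^1(\ZZ)}\le 2t^{n(t)}/n(t)!$ with $n(t)\le t\le n(t)+1$, and concludes $\|AT(t)\|\le Ct^{-1/2}$ by Stirling's formula. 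Without this (or some substitute argument showing the resolvent estimate $\|R(is,A)\|=O(|s|^{-2})$ can be upgraded to a log-free decay rate outside the Hilbert space setting), your proof establishes the sharp rate only for $p=2$ and leaves \eqref{rob_log_finite}, \eqref{rob_log_infty} and \eqref{deriv_rob_log} unproved for $p\ne2$.
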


\begin{proof}
Note that (A1) holds and that (A2) is satisfied for the function
 $$\phi(\lambda)=\frac{1}{\lambda+1},\quad \lambda\ne-1.$$
We also have $\gs(\Ad)=\set{-1}$, so that (A3) holds, and since 
  $$\Omega_\phi=\{\lambda\in\CC:|\lambda+1|=1\},$$ 
  we see that (A4) holds as well. Assumption (A5) again holds by Lemma~\ref{lem:repeated} and Theorem~\ref{thm:Aunifbdd}, and indeed the second part of the latter result even shows that the semigroup is contractive.
As in the proof of Theorem~\ref{cor:plat}, a simple calculation shows that $n_\phi=2$, so all of the statements follow from Theorem~\ref{cor:gen} except for the rates in equations  \eqref{rob_log_finite}, \eqref{rob_log_infty} and \eqref{deriv_rob_log} and the final statement concerning optimality. The latter follows as in Remark~\ref*{rem:gen}\eqref{rem:model_optimality}. In order to obtain the sharper rates we require a better estimate on the asymptotic behaviour of $\|AT(t)\|$ as $t\to\infty$ than is given in  Theorem~\ref{thm:poly} for the general case.

For $t\ge0$, let $y(t)\in \ell^1(\ZZ)$ be the scalar-valued sequence whose $k$-th term is given by
$$y_k(t)=\frac{t^k}{k!}-\frac{t^{k+1}}{(k+1)!},\quad k\ge0,$$
and $y_k(t)=0$ for $k<0$. It is shown in the proof of \cite[Theorem~3]{FeiFra12} that, given $x_0\in X$, 
$$AT(t)x_0=e^{-t}\big(y(t)*z_0-x_0\big),\quad t\ge0,$$
 where $z_0=Sx_0$ with $S$ being the right-shift. Then $\|z_0\|=\|x_0\|$ and it follows from Young's inequality that 
 $$\|AT(t)x_0\|\le e^{-t}\big(1+\|y(t)\|_{\ell^1(\ZZ)}\big)\|x_0\|,\quad t\ge0.$$
 In particular,
 $$\|AT(t)\|\le e^{-t}\big(1+\|y(t)\|_{\ell^1(\ZZ)}\big)\quad t\ge0.$$
 As explained in the proof of \cite[Theorem~3]{FeiFra12}, for each $t\ge0$ there exists an integer $n(t)\ge0$ such that $n(t)\le t\le n(t)+1$ and 
 $$\|y(t)\|_{\ell^1(\ZZ)}\le 2\frac{t^{n(t)}}{n(t)!}.$$
By Stirling's approximation, 
 $$n(t)!\ge \sqrt{2\pi n(t)}\left(\frac{n(t)}{e}\right)^{n(t)},\quad t\ge1.$$
Now straightforward estimates show that
 \eqn{\label{AT_est}
 \|AT(t)\|\le \frac{C}{t^{1/2}},\quad t\ge2,
 }
 for some $C>0$, and the result follows as in the proof of Theorem~\ref{cor:gen}.
\end{proof}

\begin{rem}
\begin{enumerate}[(a)]
\item Note that in the above setting, the semigroup has an explicit representation. Indeed, if $T(t)(x_k)=(y_k(t))$ for $(x_k),(y_k)\in X$ and $t\ge0$, then
$$y_k(t)=e^{-t}\sum_{n=0}^\infty\frac{t^n}{n!} x_{k-n},\quad k\in\ZZ, \;t\ge0.$$
In particular, an application of Young's inequality gives an alternative, more direct proof of the fact that the semigroup $T$ is contractive in this case for  $1\le p\le\infty$; cf.\ Remark~\ref*{rem:plat}\eqref{prp:platoonnoncontr}.
\item The above proof can be refined to give an explicit constant $C$ in \eqref{AT_est}. For instance, it is straightforward to show that the value 
$$C=\frac{t_0^{1/2}}{e^{t_0}}+\left(\frac{2}{\pi}\right)^{1/2}\left(1-\frac{1}{t_0}\right)^{-t_0-1/2}$$
gives the inequality for the range $t\ge t_0>1$. In particular, $C=4.705$ works for $t\ge2$, $C=2.191$ works for $t\ge100$, and as $t_0\to\infty$ the value of the constant approaches $e(2/\pi)^{1/2}\approx 2.169$.
\item For further discussion of the robot rendezvous problem and in particular its connection with the theory of Borel summability, see \cite{FeiFra12, FeiFra12b}.
\end{enumerate}
\end{rem}

We conclude by briefly considering an interesting generalisation of the robot rendezvous problem in which the original differential equations are replaced by
$$\dot{x}_k(t)=x_{k-1}(t)+\alpha_k x_k(t),\quad k\in\ZZ,\; t\ge0,$$
where for each $k\in\ZZ$ either $\alpha_k=-1$ or $\re\alpha_k<-1$. The original robot rendezvous problem corresponds to the choice $\alpha_k=-1$ for all $k\in\ZZ$. If we again let $X=\ell^p(\CC)$ for $1\le p\le\infty$, we are led to consider the semigroup $T$ generated by the operator $A\in\B(X)$ given by $Ax=(x_{k-1}+\alpha_k x_k)_{k\in\ZZ}$ for all $(x_k)_{k\in\ZZ}\in X$. We restrict ourselves to stating a result about the decay of $\|AT(t)\|$ as $t\to\infty$, which could be used to obtain statements about  orbits and their derivatives as in Sections 4 and 5.

\begin{thm}\label{thm:robot_ext}
In the modified robot rendezvous problem considered above, let $\Omega=\{\alpha_k: k\in\ZZ\}$ and suppose that $-1\in\Omega$. Then, for all $c\in(0,1)$,
$$\|AT(t)\|=O\big(\mlog^{-1}(ct)\big),\ t\to\infty,$$
where $m:(0,1]\to[1,\infty)$ is defined by 
\begin{equation}\label{eq:robot_m}
m(r)=\sup_{r\le|s|\le1}\frac{1}{\dist(i s,\Omega)-1},\quad 0<r\le 1,
\end{equation}
and $\mlog$ is as in Theorem~\textup{\ref{mlog_gen}}.
\end{thm}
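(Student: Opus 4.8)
The plan is to reduce the assertion to Theorem~\ref{mlog_gen}, so that the proof amounts to verifying its hypotheses for the operator $A$ defined here. First I would write $A=D+S$, where $D\in\B(X)$ is the diagonal operator $(Dx)_k=\alpha_kx_k$ and $S$ is the right-shift, which is an isometry on $X=\ell^p(\CC)$ for every $1\le p\le\infty$ (so $\|S\|=1$). Since $\re\alpha_k\le-1$ for all $k$, we have $\|\me^{tD}\|=\sup_k\me^{t\re\alpha_k}\le\me^{-t}$ for $t\ge0$, and hence by the standard bounded-perturbation estimate $\|T(t)\|=\|\me^{t(D+S)}\|\le\me^{(-1+\|S\|)t}=1$ for all $t\ge0$. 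Thus $T$ is a contractive $C_0$-semigroup; in particular it is uniformly bounded and $\sigma(A)\subset\overline{\CC_-}$.

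Next I would establish the resolvent estimate. If $\lambda\in\CC$ satisfies $\dist(\lambda,\Omega)>1$, then $\lambda-D$ is invertible with $\|(\lambda-D)^{-1}\|=\dist(\lambda,\Omega)^{-1}$, and the factorisation $\lambda-A=(\lambda-D)\bigl(I-(\lambda-D)^{-1}S\bigr)$ together with $\|(\lambda-D)^{-1}S\|\le\dist(\lambda,\Omega)^{-1}<1$ yields, by a Neumann series argument, that $\lambda\in\rho(A)$ and
$$\|R(\lambda,A)\|\le\frac{\dist(\lambda,\Omega)^{-1}}{1-\dist(\lambda,\Omega)^{-1}}=\frac{1}{\dist(\lambda,\Omega)-1}.$$
In particular $\|R(is,A)\|\le(\dist(is,\Omega)-1)^{-1}\le m(|s|)$ for $0<|s|\le1$, which is exactly the resolvent bound required in Theorem~\ref{mlog_gen}. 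One also checks directly that $m$ is non-increasing and continuous and that it is $[1,\infty)$-valued (indeed $m\ge(\sqrt2-1)^{-1}$, since $-1\in\Omega$ gives $\dist(is,\Omega)\le(1+s^2)^{1/2}\le\sqrt2$ for $|s|\le1$), the finiteness of $m$ being precisely the implicit requirement that $\dist(is,\Omega)>1$ for $0<|s|\le1$.

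It remains to verify the spectral condition $\sigma(A)\cap i\RR=\{0\}$, and this is the step I expect to be the main obstacle. Since $\re\alpha_k\le-1$ for all $k$ we have $\overline\Omega\subset\{z:\re z\le-1\}$, and an elementary computation shows that, for real $s$, $\dist(is,\Omega)=1$ can hold only if $-1+is\in\overline\Omega$; as the only point of $\Omega$ with real part $-1$ is $-1$ itself and $\Omega$ has no accumulation point on the line $\re z=-1$ other than $-1$, it follows that $\dist(is,\Omega)>1$ for every $s\ne0$. The resolvent estimate then gives $is\in\rho(A)$ for all $s\ne0$, so $\sigma(A)\cap i\RR\subseteq\{0\}$. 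If $0\in\rho(A)$ here --- which can genuinely happen, for instance when only finitely many $\alpha_k$ equal $-1$ and the remaining ones are bounded away from the unit circle --- then $A$ is a bounded operator whose compact spectrum lies in the open left half-plane, so $T$ is exponentially stable; and since $-1\in\Omega$ forces $\dist(ir,\Omega)\to1$, hence $m(r)\to\infty$, as $r\to0^+$, the function $\mlog^{-1}(ct)$ decays more slowly than any exponential, so the estimate holds trivially in this case. Otherwise $\sigma(A)\cap i\RR=\{0\}$ exactly, and Theorem~\ref{mlog_gen} applied with the function $m$ above yields $\|AT(t)\|=O(\mlog^{-1}(ct))$ as $t\to\infty$ for every $c\in(0,1)$, completing the proof.
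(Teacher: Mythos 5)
Your proof is correct and follows essentially the same route as the paper: the key point is the bound $\|R(\lambda,A)\|\le(\dist(\lambda,\Omega)-1)^{-1}$ for $\dist(\lambda,\Omega)>1$, which the paper obtains by writing out the Neumann series $\sum_{\ell\ge0}\bigl((\lambda-D)^{-1}S\bigr)^{\ell}(\lambda-D)^{-1}$ term by term as an explicit formula for the resolvent, followed by an appeal to Theorem~\ref{mlog_gen}. You in fact do more than the paper, whose proof silently omits the verification of uniform boundedness and of $\sigma(A)\cap i\RR=\{0\}$; your contraction estimate via bounded perturbation of the diagonal part is fine. The one assertion you should not make is that ``$\Omega$ has no accumulation point on the line $\re z=-1$ other than $-1$'': nothing in the hypotheses forbids, say, $\alpha_k=-1-1/k+is_0$ for some fixed $s_0\ne0$, in which case $\dist(is_0,\Omega)=1$ and your argument that $is_0\in\rho(A)$ breaks down (and if $|s_0|>1$ this is not even caught by the finiteness of $m$ on $(0,1]$). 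This is really an implicit standing assumption of the theorem, which the paper's own proof also passes over in silence, so it is better flagged as a hypothesis than presented as a deduction.
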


\begin{proof}
  Let $\gl\in \C$ be such that $\dist(\gl,\Omega)>1$ and define $R(\gl)\in \B(X)$ by
  \eq{
  R(\gl)x = \left( \sum_{\ell=0}^\infty \prod_{j=0}^\ell \frac{x_{k-\ell}}{\gl-\ga_{k-j}} \right)_{k\in\Z}
  }
  for all $x=(x_k)_{k\in\Z}\in X$. Straightforward computations show that $R(\gl)(\gl-A)x=(\gl-A)R(\gl)x=x$ for all $x\in X$, and hence $\lambda\notin\sigma(A)$ and $R(\gl)=R(\lambda,A)$ for all $\lambda\in\CC$ such that $\dist(\lambda,\Omega)>1$.
  Furthermore,  it is straightforward to verify that if $\dist(\lambda,\Omega)>1$ and $\norm{x}=1$, then
\eq{
\norm{R(\gl,A)x}
\leq \sum_{\ell=0}^\infty \frac{1}{\dist(\gl,\Omega)^{\ell+1}}
= \frac{1}{\dist(\gl,\Omega)-1},
}
 and hence $\norm{R(\gl,A)}\leq (\dist(\gl,\Omega)-1)\inv$ 
for all $\lambda\in\CC$ such that $\dist(\lambda,\Omega)>1$. In particular, $\|R(is,A)\|\le m(|s|)$ for $0<|s|\le1$, where $m:(0,1]\to[1,\infty)$ is as in \eqref{eq:robot_m}, so the result follows from Theorem~\ref{mlog_gen}.
\end{proof}

\begin{rem}\label{rem:robot_ext}
Note that the conclusion of Theorem~\ref{thm:robot_ext} remains true whenever $\Omega$ is replaced by any set $\Omega'$ such that 
$$\{\alpha_k: k\in\ZZ\}\subset\Omega'\subset\{\gl\in\C:\re\gl<-1\}\cup\{-1\}.$$ 
In particular, we can take $\Omega'=\Omega_\psi$, where $\Omega_\psi=\{\lambda\in\CC:\re\lambda\le-\psi(|\im\lambda|)\}$ for some non-decreasing and continuously differentiable function $\psi:[0,1]\to[1,\infty)$ satisfying $\psi(0)=1$ and $\psi(s)>1$ for $s\in(0,1]$; see Figure~\ref{fig:robots}. Then \eqref{eq:robot_m} becomes
\begin{equation}\label{eq:robot_m2}
m(r)=\frac{1}{\dist(i r,\Omega_\psi)-1},\quad 0<r\le 1.
\end{equation}
  If $\psi'(0)>0$, it is easy to see that $m(r)\asymp r^{-2}$ as $r\to0+$ and therefore 
 \begin{equation}\label{eq:robot_decay2}
\|AT(t)\|=O\bigg(\bigg(\frac{\log t}{t}\bigg)^{1/2}\bigg),\quad t\to\infty.
\end{equation}
 On the other hand if $\psi'(0)=0$ it follows from  geometric considerations that 
$$m(r)=\left(\psi(q(r))\big(1+\psi'(q(r))^2\big)^{1/2}-1\right)^{-1},\quad0< r\le 1,$$
where $q:(0,1]\to(0,1]$ is the inverse function of the map $p(r)=r+\psi(r)\psi'(r)$, $0<r\le 1$. 

\begin{figure}
\centering
  \def\svgwidth{200pt}
 { 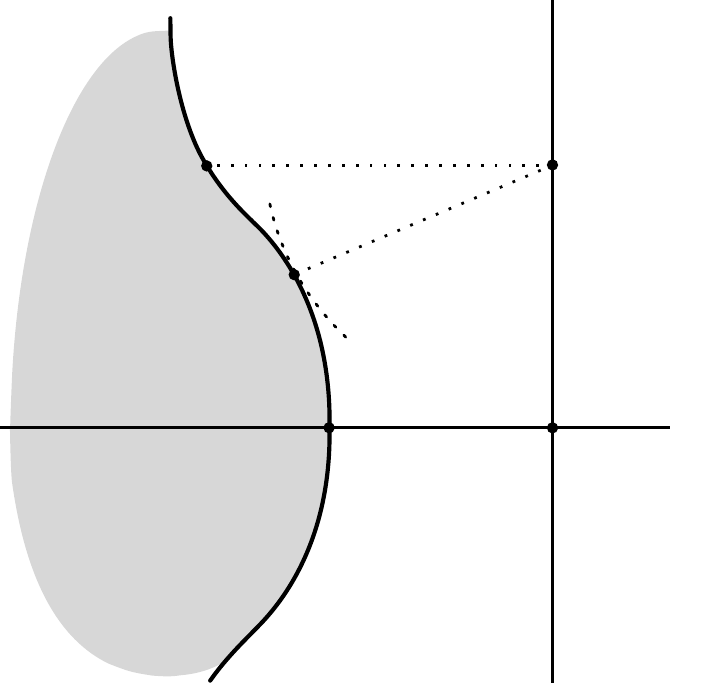}
\caption{The region $\Omega_\psi$.}
\label{fig:robots}
\end{figure}
\end{rem}

\begin{ex}
\begin{enumerate}[(a)]
\item In the original robot rendezvous problem, where $\Omega=\{-1\}$, it follows from  Theorem~\ref{thm:robot_ext} that 
\eqref{eq:robot_decay2} holds. The proof of Theorem~\ref{cor:rob} shows that the logarithm can be omitted.
\item In the context of Remark~\ref{rem:robot_ext}, if $\psi(s)=1+s^\alpha$ for some $\alpha\ge1$, then crude estimates show that $m(r)\asymp r^{-2}$ as $r\to0+$ if $1\le\alpha<2$ and $m(r)\asymp r^{-\alpha}$ as $r\to0+$ if $\alpha\ge2.$ Hence in the first case \eqref{eq:robot_decay2}  holds, while for $\alpha\ge2$  
$$\|AT(t)\|=O\bigg(\bigg(\frac{\log t}{t}\bigg)^{1/\alpha}\bigg),\quad t\to\infty.$$
If $p=2$, so that $X$ is a Hilbert space, it follows from Theorem~\ref{thm:poly} that the logarithm can be omitted in both cases.
\end{enumerate}
\end{ex}

\section{Conclusion}\label{sec:concl}

The main result of this paper, Theorem~\ref{cor:gen}, is a powerful tool for studying the asymptotic behaviour of solutions to a rather general class of infinite systems of coupled differential equations. The versatility of the general theory is illustrated by the applications presented in Sections~\ref{sec:plat} and \ref{sec:robots} to two important special cases: the platoon model and the robot rendezvous problem. Underlying Theorem~\ref{cor:gen} are a number of abstract results from operator theory and in particular the asymptotic theory of operator semigroups. It is  striking how effective the results obtained by these abstract techniques are even in particular examples, shedding new light both on the platoon model and the robot rendezvous problem. Nevertheless, a number of  important questions remain open. The first question, namely whether the logarithmic factors are needed in Theorem~\ref{cor:gen} when $p\ne2$, was already raised in Remark~\ref*{rem:gen}\eqref{rem:model_optimality}. Here it would already be of interest to have an affirmative answer in certain special cases, for instance the platoon model dealt with in Theorem~\ref{cor:plat}; see Remark~\ref*{rem:plat}\eqref{rem:plat_log}. Another aspect of the theory which would benefit from further development is the condition for uniform boundedness of the semigroup presented in Theorem~\ref{thm:Aunifbdd}, since in its present state this condition is rather difficult to verify except for relatively simple characteristic functions. Furthermore, it remains to be determined to what extent the results obtained here can be extended to situations involving more complicated coupling, such as systems in which the evolution of each subsystem depends on the states of several other subsystems rather than just one.  Finally, it would seem worth investigating the corresponding questions in the discrete-time setting, both from an applications perspective and in view of the fact that the corresponding abstract theory is equally well developed as in the continuous-time setting. We hope to address some of these issues in future publications.

\bibliography{robots-reference}
\bibliographystyle{plain}

\end{document}